\newcommand{\N}{\mathbb{N}}
\newcommand{\Z}{\mathbb{Z}}
\newcommand{\R}{\mathbb{R}}
\newcommand{\T}{\mathbb{T}}
\newcommand{\cC}{\mathcal{C}}
\newcommand{\cF}{\mathcal{F}}
\newcommand{\cS}{\mathcal{S}}
\newcommand{\diff}{ \; \mathrm{d}}
\newcommand{\supp}{\operatorname{supp}}
\newcommand{\pv}{\mathrm{pv}}
\renewcommand{\div}{\operatorname{div}}
\newcommand{\di}[1]{\Delta_{#1}}
\newcommand{\with}{\quad\hbox{with}\quad}
\newcommand{\andf}{\quad\hbox{and}\quad}
\def\ds{\delta\!\sigma}
\def\du{\delta\!u}
\def\dU{\delta\!U}
\newtheorem{theo}{\bf Theorem}[section]
\newtheorem{coro}{\bf Corollary}[section]
\newtheorem{lem}{\bf Lemma}[section]
\newtheorem{prop}{\bf Proposition}[section]
\title[Fractional Euler alignment system]{Regular solutions to the fractional Euler alignment system in the Besov spaces framework}
\author{Rapha\"el Danchin, Piotr B. Mucha, Jan Peszek, Bartosz Wr\' oblewski}
\address[R. Danchin]{Universit\'{e} Paris-Est,  LAMA (UMR 8050), UPEMLV, UPEC, CNRS,
 61 avenue du G\'{e}n\'{e}ral de Gaulle, 94010 Cr\'{e}teil Cedex, France.} \email{raphael.danchin@u-pec.fr}
\address[P.B. Mucha]{Instytut Matematyki Stosowanej i Mechaniki,
 Uniwersytet Wars\-zawski, 
ul. Banacha 2,  02-097 Warszawa, Poland.} 
\email{p.mucha@mimuw.edu.pl}
\address[J. Peszek]{Center for Scientific Computation and Mathematical Modeling (CSCAMM) and Department of Mathematics,
University of Maryland, College Park, MD 20742-4015, USA\newline
Instytut Matematyczny Polskiej Akademii Nauk, ul. \' Sniadeckich 8
00-656 Warszawa, Poland.}
\email{j.peszek@mimuw.edu.pl}
\address[B. Wr\' oblewski]{Instytut Matematyczny, Uniwersytet Wroc\l{}awski, 
pl. Grunwaldzki 2/4, 50-384 Wroc\l{}aw, Poland}
\date{\today}
\begin{document}

\begin{abstract} 
We here  construct (large) local and small global-in-time regular unique solutions to the fractional Euler alignment system  in the whole space $\R^d$, in the case where  the  deviation of 
the initial density from a constant  is sufficiently small. 
Our analysis strongly relies on the use of Besov spaces of the type $L^1(0,T;\dot B^s_{p,1}),$
which allow to get time independent estimates for the density even though 
it satisfies a transport equation with no damping. 
 Our choice of a functional setting is not optimal but aims at providing a transparent and accessible argumentation.
\end{abstract}

\maketitle

{\it MS Classifications: } 35Q31, 35B65, 35R11, 76N10, 82C22.

%\tableofcontents

\section{Introduction}
Collective dynamics of interactive particles leading to an emergent phenomenon is an increasingly popular subject of research 
with a number of applications ranging from biology or robotics to social sciences  \cite{perea, oscilsyn}. 
The common  trait of such models is that  relatively simplistic agents basing their behavior on limited information produce a complex structure like in 
 e.g. anthills or specific formations of birds. Interestingly, this  may be also observed in 
   seemingly more sophisticated phenomena, such as emergence of languages in primitive cultures \cite{lang}, distribution of goods \cite{goods} or gang-related crime \cite{crime}. 

From the mathematical viewpoint, these models are a source of many challenging problems
like deriving  the kinetic and then hydrodynamic models from basic ODE systems. 
One may mention e.g. the  well-known  Hilbert's sixth problem of axiomatization of mathematical physics, that 
consists in  providing a mathematically rigorous derivation of Boltzmann and Euler equations from the Newtonian particle systems.
\medbreak
The model of interactive particles  we aim at considering here is the following  Euler-type  hydrodynamic version 
of the  Cucker-Smale (CS) flocking model introduced in \cite{cuc1}:
\begin{align}\label{sys}
\left\{
\begin{array}{l}
\displaystyle \rho_t + \div ( \rho u  ) = 0,\\[10pt]
\displaystyle \rho  u_t + \rho (u \cdot \nabla) u \displaystyle  = \int_{\R^d} \biggl(\frac{u(t,y) - u(t,x)}{|y-x|^{d+ \alpha}}\biggr)\rho(t,x) \rho (t,y) \diff y \ .
\end{array}
 \right.
\end{align}
We refer to \eqref{sys} as the {\it fractional Euler flocking system}. Here $\rho(t,x)\in \R_+$ denotes the density of particles at position $x\in \R^d$ and time $t>0$. The vector field $u(t,x)\in\R^d$ represents 
the velocity of a particle that occupies the position $x\in\R^d$ at time $t\in\R_+$. The exponent $\alpha$ is assumed to be  in the {\mbox range $(1,2)$}.

We further suppose that the velocity tends to $0$ at infinity, and that the density goes to some positive constant  (say $1$ with no loss of generality), in a sense that will made clear once we will have introduced 
our functional setting (see below). 
% and the above  system is supplemented with the initial conditions $u_0 \in \dot B^{2-\alpha}_{d,1}\cap \dot{B}^{3-\alpha}_{d,1}$ and $\rho_0=\sigma_0+1$, where $\sigma_0 \in \dot B^{1}_{d,1}\cap \dot{B}^2_{d,1}$, which are Besov spaces defined in the next section.
\medbreak
Before we proceed, let us elaborate on the origin of the system. Going back to the ODE's and kinetic theory we begin with the CS model governed by the following system of $N$ particles
\begin{align}\label{csp}
\left\{
\begin{array}{lll}
\displaystyle\frac{d}{dt}x_i &=& v_i,\\
\displaystyle\frac{d}{dt}v_i &=& \displaystyle\frac{1}{N}\displaystyle\sum_{j=1}^N (v_j-v_i)\psi(|x_i-x_j|),
\end{array}
\right.
\end{align}
where $x_i(t)$ and $v_i(t)$ denote the position and velocity of the  $i$-th particle at time $t$. The function $\psi$, referred to as the {\it communication weight}, is usually non-negative, non-increasing 
and Lipschitz continuous.

 The CS model is  a simple example of  interacting particles' models that has been extensively studied in various, mostly qualitative, directions  such as collision avoidance \cite{ahn1, cuc2, ccmp} 
 and asymptotic and pattern formation \cite{car, ha1}. These two directions lead to further branching of research into the CS model with singular communication weight \cite{jpe, jps}, or various additional 
 forces that ensure specific asymptotic pattern formation \cite{hahaki, ha2} or under a leadership of selected individuals \cite{cuc3, shen}.
\smallbreak
Further, taking $N\to\infty$ in \eqref{csp}, we formally obtain  the kinetic equation
\begin{align}\label{csk}
f_t + v\cdot \nabla_x f + \div_v[F(f)f] &= 0,\\
\hbox{with }\ F(f)(t,x,v) &:= \int_{\R^d\times \R^d}(w-v)\psi(|x-y|)f(t,y,w)\diff y \diff w,\nonumber
\end{align}
where  $f(t,x,v)$ stands for  the distribution of the particles that at time $t$ have position $x$ and velocity $v$. 

The mathematical  derivation of \eqref{csk} is a challenging problem that has been considered  in e.g. \cite{haliu, hatad}. Particularly interesting from the point of view of this paper is the case of singular 
communication weight $\psi(s)=s^{-\gamma}$ which was studied in \cite{mp, jab}.
\smallbreak
Finally, taking the   hydrodynamical limit
\begin{equation*}
 f(t,x,v)=\rho(t,x) \delta_{u(t,x)}
\end{equation*}
in  \eqref{csk} and integrating over $\R^d_v,$ we find out  the continuity equation \eqref{sys}$_1$. As for the momentum equation $\eqref{sys}_2$,  it is obtained by testing with $v \rho(t,x) \delta_{u(t,x)}$. 
\smallbreak
The mathematically rigorous derivation of \eqref{sys} as a hydrodynamic limit of \eqref{csk}  has not yet been solved in full generality (see e.g. \cite{spoy} for recent developments in this direction). 
It is worthwhile noting  that the cases of regular and singular communication weights are significantly different.
In the present paper, we consider the singular communication weight $\psi(s)=s^{-\gamma}$ with  $\gamma = d + \alpha$ and $\alpha\in(1,2).$
Let us point out that \eqref{sys} can be  alternatively  counted as an element of non-classical hydrodynamics related to the description of phenomena of aggregation, flocking, and in general, 
of modeling of collective dynamics of interacting particles. It may be seen as 
a coupling between the classical continuity equation and a nonlinear parabolic system of fractional order, 
and analyzed by means of techniques that are barely related to the kinetic origin of the system.
\medbreak
Let us now bring the reader up to date with the research on system \eqref{sys}. Precisely this model but in the periodic 1D case has been investigated by Do {\it et al.} in \cite{kis} and simultaneously by Shvydkoy and Tadmor in \cite{tad1, tad2, tad4}. The main advantage of dimension $1$ is that  
  $u(t)$ and $\rho(t)$ are real numbers, and can thus be directly compared. Indeed, an easy  calculation then reveals that quantity
\begin{align*}
e(t,x):= u_x(t,x) + \int_{\T}\psi(|x-y|)(\rho(t,x)-\rho(t,y))\,dy
\end{align*}
satisfies
\begin{align*}
e_t + (ue)_x = 0.
\end{align*}
Thanks to these relations, one can  compare the regularity of $u$ and $\rho$ and, using the compactness of the 1D torus, obtain  a global-in-time positive lower bound on the density, thanks to which  the ``good" term on the right-hand side of \eqref{sys}$_2$ does not disappear. That method unfortunately
fails in higher dimension. 
Another approach is used in \cite{tad3, tan}, where the 1D and 2D models with compact initial data are studied by the method of characteristic. 
\medbreak
Our goal here is to provide a general  existence result in any dimension $d\geq 1,$ 
and to connect this class of problems to the well-developed language of Besov spaces.
Our main result  reads:
\begin{theo}\label{main-intro}
Assume that  $\alpha \in (1,2)$ and consider initial data $(\rho_0,u_0)$ so that $u_0$ and $\nabla u_0$ are  in 
$\dot B^{2-\alpha}_{d,1},$ and $\rho_0 -1$ and $\nabla\rho_0$ are  in $\dot B^{1}_{d,1}.$ 
 There exists $\varepsilon>0$ such that  if  in addition 
\begin{equation}\label{eq:smallness}
\|u_0\|_{\dot B^{2-\alpha}_{d,1}} +  \|\rho_0-1 \|_{\dot B^{1}_{d,1}}  <\varepsilon,
\end{equation}
then the fractional Euler system $\eqref{sys}$
has a unique global solution $(\rho,u)$ such that
$$u,\nabla u \in \cC_b(\R_+; \dot{B}^{2-\alpha}_{d,1}) \cap L^1(\R_+;\dot B^{2}_{d,1})
\andf(\rho -1),\nabla\rho \in \cC_b(\R_+; \dot B^{1}_{d,1}).$$
In the case where the smallness condition is fulfilled only by $\rho_0$ then there exist 
a unique solution $(\rho,u)$ on some time interval $[0,T]$ with $T>0$ so that
$$u,\nabla u \in \cC_b([0,T]; \dot{B}^{2-\alpha}_{d,1}) \cap L^1([0,T];\dot B^{2}_{d,1})
\andf(\rho -1),\nabla\rho \in \cC_b([0,T]; \dot B^{1}_{d,1}).$$
\end{theo}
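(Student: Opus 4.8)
The plan is to recast \eqref{sys} as a coupled system consisting of a transport equation for $a:=\rho-1$ and a fractional parabolic equation for $u$, and then run a fixed-point argument in the space
$$E_T:=\Bigl\{(a,u):\ a,\nabla a\in\cC_b([0,T];\dot B^{1}_{d,1}),\quad u,\nabla u\in\cC_b([0,T];\dot B^{2-\alpha}_{d,1})\cap L^1(0,T;\dot B^{2}_{d,1})\Bigr\}.$$
First I would rewrite the momentum equation. Dividing by $\rho$ and using $\rho=1+a$, the right-hand side of \eqref{sys}$_2$ becomes $\rho\,\mathcal L u+(\text{lower order})$, where $\mathcal L$ is (up to a positive constant) the fractional Laplacian $-(-\Delta)^{\alpha/2}$ convolved against the measure $\rho(t,y)\,dy$; the leading part is $-(-\Delta)^{\alpha/2}u$ plus a commutator-type remainder that is quadratic in $(a,u)$ and of order $\le 2-\alpha$ in $u$. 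Thus $u$ solves
$$u_t+(-\Delta)^{\alpha/2}u = F(a,u),\qquad F(a,u)=-u\cdot\nabla u-\tfrac{a}{1+a}\,(-\Delta)^{\alpha/2}u+R(a,u),$$
with $R$ collecting the singular-integral remainder, while $a$ solves the pure transport equation $a_t+u\cdot\nabla a=-(1+a)\div u$.

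Second, the linear estimates. For the transport equation I would invoke the standard $\dot B^{s}_{p,1}$ transport estimate (Danchin-type): $\|a\|_{L^\infty_T(\dot B^{1}_{d,1})}\lesssim \|a_0\|_{\dot B^{1}_{d,1}}\exp\bigl(C\int_0^T\|\nabla u\|_{\dot B^{d/d}_{d,1}}\bigr)$ and similarly for $\nabla a$ in $\dot B^{1}_{d,1}$ — the crucial point, highlighted in the abstract, is that $\int_0^T\|\nabla u\|_{\dot B^{2}_{d,1}}\,dt$ (which controls $\|\nabla u\|_{L^1_T(\mathrm{Lip})}$ since $\dot B^{2}_{d,1}\hookrightarrow\dot B^{1+d/d}_{d,1}$... more precisely one needs $\dot B^{2}_{d,1}\hookrightarrow \dot W^{1,\infty}$, which holds because $2-d/d=1>0$) is finite and small, so the exponential factor stays close to $1$ with no growth in $T$. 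For the fractional heat equation I would use maximal-regularity/smoothing estimates in Besov spaces: if $u_t+(-\Delta)^{\alpha/2}u=f$ then
$$\|u\|_{L^\infty_T(\dot B^{2-\alpha}_{d,1})}+\|u\|_{L^1_T(\dot B^{2}_{d,1})}\lesssim \|u_0\|_{\dot B^{2-\alpha}_{d,1}}+\|f\|_{L^1_T(\dot B^{2-\alpha}_{d,1})},$$
and the same with one extra derivative, using that the gain of $\alpha$ derivatives integrated in time converts $L^1_T\dot B^{2-\alpha}_{d,1}$ data into the stated solution space.

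Third, the nonlinear estimates. I would bound each piece of $F$ in $L^1_T(\dot B^{2-\alpha}_{d,1})$ (and with one more derivative): $u\cdot\nabla u$ by the product law $\dot B^{2-\alpha}_{d,1}\cdot\dot B^{2-\alpha}_{d,1}$-type estimates combined with the $L^1_T\dot B^2_{d,1}$ control on $\nabla u$ and $L^\infty_T$ control on $u$; the term $\frac{a}{1+a}(-\Delta)^{\alpha/2}u$ by composition estimates for $a\mapsto\frac{a}{1+a}$ in $\dot B^{1}_{d,1}\cap\dot B^{2}_{d,1}$ (valid once $\|a\|_{L^\infty}<1$, which the smallness guarantees) times $(-\Delta)^{\alpha/2}u\in L^1_T\dot B^{2-\alpha}_{d,1}$; and the singular-integral remainder $R(a,u)$ by a dedicated lemma — this is where the structure of the convolution kernel $|x-y|^{-d-\alpha}$ enters and must be handled with a Besov-space paradifferential/commutator estimate showing it behaves like a product of $a\in\dot B^{1}_{d,1}$ with a fractional derivative of $u$ of order $\alpha<2$, hence lands in $L^1_T\dot B^{2-\alpha}_{d,1}$ after using $\int_0^T\|u\|_{\dot B^2_{d,1}}$. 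Collecting everything yields a map $\Phi:(a,u)\mapsto(\tilde a,\tilde u)$ with
$$\|\Phi(a,u)\|_{E_T}\le C_0\|(a_0,u_0)\|+ C\|(a,u)\|_{E_T}^2,$$
so for $\varepsilon$ small $\Phi$ maps a small ball of $E_\infty$ (global case) or $E_T$ (local case) into itself, and a parallel difference estimate gives contraction in a weaker norm; uniqueness follows from the same difference estimate. The local statement handles the case where only $\rho_0$ is small: there $u_0$ can be large, and one absorbs the large-data terms by choosing $T$ small so that $\int_0^T\|u\|_{\dot B^2_{d,1}}\,dt$ and the free-evolution norm of $u$ are small, closing the same estimates on $[0,T]$.

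The main obstacle I anticipate is the singular-integral remainder $R(a,u)$: one must show that the nonlocal operator $v\mapsto \int_{\R^d}\frac{v(y)-v(x)}{|x-y|^{d+\alpha}}\,a(y)\,dy$ — i.e. the difference between the density-weighted fractional Laplacian and $\|\cdot\|\,(-\Delta)^{\alpha/2}$ — maps into $\dot B^{2-\alpha}_{d,1}$ with a norm controlled by $\|a\|_{\dot B^1_{d,1}}\|v\|_{\dot B^2_{d,1}}$ (plus symmetric terms), which requires a careful Littlewood–Paley decomposition of both $a$ and $v$ and exploiting the cancellation $v(y)-v(x)$ near the diagonal together with the decay of the kernel at infinity; getting the endpoint index $2-\alpha$ right, and making sure the $a$-dependence is exactly one factor of $\dot B^1_{d,1}$ (not $\dot B^2_{d,1}$, which would fail to be time-integrable), is the delicate point on which the whole scheme — and the choice of functional setting advertised in the abstract — hinges.
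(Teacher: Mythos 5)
Your proposal follows essentially the same route as the paper: renormalize to a transport equation for $\rho-1$ coupled with a fractional heat equation for $u$, use the Danchin-type transport estimate and the $L^1$-in-time maximal smoothing estimate for $(-\Delta)^{\alpha/2}$, close the scheme via product laws plus a dedicated bilinear estimate for the nonlocal remainder (the paper's Lemma \ref{cruc}, whose required form $\|\cdot\|_{\dot B^{2-\alpha}_{d,1}}\lesssim\|\sigma\|_{\dot B^1_{d,1}}\|u\|_{\dot B^2_{d,1}}$ you state correctly), and propagate one extra derivative to get contraction/uniqueness in the weaker critical norm, with the large-velocity case handled by shrinking $T$. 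The only differences are cosmetic: the paper uses a decoupled Friedrichs-type iteration with spectrally truncated data rather than a Banach fixed point, and its renormalization produces the coefficient $\sigma$ rather than $a/(1+a)$, avoiding the composition estimate you invoke.
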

Let us point out that the above result is not quite  optimal as regards regularity
assumptions. Indeed, the reader may check that System \eqref{sys} is invariant
for all $\lambda>0$ by the rescaling 
$$\rho(t,x)\leadsto \rho(\lambda^\alpha t,\lambda x)\quad\hbox{and}\quad
u(t,x)\leadsto \lambda^{\alpha-1} u(\lambda^\alpha t,\lambda x).$$
Optimal spaces for well-posedness are thus expected to have the above invariance. 
In the class of Besov spaces that we considered above, this would correspond to taking
initial data such that $(\rho_0-1,u_0)$ is only in $\dot B^1_{d,1}\times\dot B^{2-\alpha}_{d,1}.$ 
The smallness condition \eqref{eq:smallness} is thus at the critical level of 
regularity, but one more derivative is required on the data. 
The reason we choose to work with so much regularity is essentially to offer the reader an elementary proof with as less as possible technicalities. 
This choice gives us the possibility to obtain solutions in the class that naturally comes from the a priori estimate, and higher regularity allows to control uniqueness.
From the mathematical viewpoint the biggest challenge is to control the regularity of density for all time
(if \eqref{eq:smallness} is fulfilled). Roughly speaking, the transport theory
requires the velocity field to be at least in $L^1(0,\R_+;{\rm Lip})$, and it is guaranteed by regularity $L^1(\R_+;\dot B^2_{d,1})$ of the vector field obtained by analysis of $(\ref{sys})_2$. 
We derive estimates of solutions to (\ref{sys}) at this level of regularity, but they are insufficient to prove the uniqueness, owing to  the hyperbolic nature of the continuity equation.
 Here the troublemaker is the nonlinear nonlocal term
\begin{align*}
\int_{\R^d} \biggl(\frac{u(t,y) - u(t,x)}{|y-x|^{d+ \alpha}}\biggr)\rho(t,x) \rho (t,y) \diff y\ ,
\end{align*}
which, for too low  regularity, cannot be estimated properly.
\medbreak
 It is not clear whether one could find a suitable setting to avoid higher regularity like in \cite{hoff} or \cite{DM12}.
Here we chose to increase  the regularity of solutions by one derivative. That choice is 
the simplest one as it enables us to use essentially the same functional setting 
for the solution and its first order space derivatives. 
\medbreak
As a general remark we would like to underline that Theorem \ref{main-intro} can  be seen as  a first quantitative result for system (\ref{sys}), which allows to investigate 
interesting qualitative properties of solutions to the studied system. The basic energy balance for $\eqref{sys}$
\begin{align*}
\frac{d}{dt} \int_{\R^d}\rho |u|^2 dx + \int_{\R^d\times \R^d}\frac{|u(x)-u(y)|^2}{|x-y|^{d+\alpha}}\rho(x)\rho(y)\diff y \diff x = 0
\end{align*}
leads, under the assumption $\rho\approx 1$, to the asymptotic decay of the velocity to $0$. However, since we are working in whole space, proving exponential decay is impossible
(in contrast with \cite{kis} that analysis the system with  periodic boundary conditions, 
in the one-dimensional case). The information coming from Theorem \ref{main-intro} allows to obtain the following information concerning the decay of the velocity at infinity, showing
the expected flocking (see the proof in Appendix).
\begin{coro}\label{coro1}
 Let $(\rho,u)$ be a global in time solution given by Theorem \ref{main-intro}. Then 
 $$
 \|u(t)\|_{L^\infty} \to 0 \mbox{ \ \ as \ \ } t \to \infty.
 $$
\end{coro}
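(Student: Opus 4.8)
The plan is to exploit two ingredients already guaranteed by Theorem \ref{main-intro}: first, the integrability in time of a norm controlling $\|u(t)\|_{L^\infty}$, namely $u \in L^1(\R_+;\dot B^2_{d,1})$ together with $u \in \cC_b(\R_+;\dot B^{2-\alpha}_{d,1})$; second, some uniform continuity in time of $t \mapsto \|u(t)\|_{L^\infty}$, which rules out ``spikes'' and upgrades $L^1$-in-time integrability to pointwise decay. Concretely, I would first record the embedding $\dot B^{2}_{d,1} \hookrightarrow L^\infty$ (since $2 - d/d = 1 > 0$, indeed $\dot B^{d/d}_{d,1}=\dot B^1_{d,1}\hookrightarrow L^\infty$, and a fortiori $\dot B^2_{d,1}\hookrightarrow L^\infty$), so that
\begin{equation*}
\int_0^\infty \|u(t)\|_{L^\infty}\diff t \;\lesssim\; \int_0^\infty \|u(t)\|_{\dot B^{2}_{d,1}}\diff t \;<\;\infty.
\end{equation*}
This already forces $\liminf_{t\to\infty}\|u(t)\|_{L^\infty}=0$, but to get a genuine limit I need to control the modulus of continuity of $g(t):=\|u(t)\|_{L^\infty}$.

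For the uniform-continuity step, I would use the momentum equation $\eqref{sys}_2$ to estimate $\|\partial_t u\|$ in a suitable space and then interpolate. Rewriting $\eqref{sys}_2$ as
\begin{equation*}
u_t = -(u\cdot\nabla)u + \frac1\rho\int_{\R^d}\frac{u(t,y)-u(t,x)}{|y-x|^{d+\alpha}}\,\rho(t,x)\rho(t,y)\diff y,
\end{equation*}
one sees that the first term is bounded in, say, $L^\infty$ by $\|u\|_{L^\infty}\|\nabla u\|_{L^\infty}$ (both factors controlled by the solution spaces), and the nonlocal term is, up to the bounded factor $\rho/\rho = 1$ plus a perturbation of size $\|\rho-1\|_{L^\infty}$, essentially $\Lambda^\alpha$ applied to $u$ convolved against things controlled by $\rho$; since $u,\nabla u\in L^1(\R_+;\dot B^2_{d,1})$ one gets $\Lambda^\alpha u\in L^1(\R_+;\dot B^{2-\alpha}_{d,1})\hookrightarrow L^1(\R_+;L^\infty)$ for $\alpha\in(1,2)$, so the right-hand side lies in $L^1(\R_+;L^\infty)$. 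This yields $u\in W^{1,1}(\R_+;L^\infty)$, whence $g=\|u(\cdot)\|_{L^\infty}$ is (after modification on a null set) absolutely continuous — in particular uniformly continuous on $\R_+$. A nonnegative uniformly continuous function with $\int_0^\infty g<\infty$ must tend to $0$: if not, there are $\delta>0$ and $t_k\to\infty$ with $g(t_k)\ge\delta$, and by uniform continuity $g\ge\delta/2$ on intervals of fixed length $\eta$ around each $t_k$ (which we may take disjoint), contradicting integrability. This proves $\|u(t)\|_{L^\infty}\to0$.

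The main obstacle is the bookkeeping for the nonlocal term: one must verify that, under the smallness assumption and the regularity from Theorem \ref{main-intro}, the quantity $\rho(x)\rho(y)$ does not destroy the estimate — i.e. one needs the lower bound $\rho\gtrsim 1$ (which follows from $\|\rho-1\|_{L^\infty}\lesssim\|\rho-1\|_{\dot B^1_{d,1}}\le\varepsilon$ small and the continuity equation preserving this smallness, exactly as in the global part of Theorem \ref{main-intro}) to make sense of dividing by $\rho$, and one needs a commutator-type bound to absorb $\rho(y)-\rho(x)$ and $\rho(x)-1$ factors against the kernel $|x-y|^{-d-\alpha}$ so that what remains is genuinely a fractional Laplacian of $u$ with coefficients in a good space. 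This is routine given the machinery of the paper but is the one place where the argument is not a one-line consequence of the statement of Theorem \ref{main-intro}. An alternative, lighter route that avoids $\partial_t u$ altogether would be to interpolate: from $u\in\cC_b(\R_+;\dot B^{2-\alpha}_{d,1})$ and $u\in L^1(\R_+;\dot B^2_{d,1})$ one gets time-equicontinuity of the low frequencies directly and smallness of the high-frequency tail uniformly in $t$ from the uniform $\dot B^{2-\alpha}_{d,1}$ bound via a dyadic splitting, then combine with $\int_0^\infty\|u\|_{\dot B^2_{d,1}}<\infty$; I would likely present this splitting argument as it is self-contained and sidesteps the need to estimate the nonlocal term at all.
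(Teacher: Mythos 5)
Your overall strategy is exactly the paper's: combine time-integrability of a norm controlling $\|u(t)\|_{L^\infty}$ with an $L^1(\R_+;L^\infty)$ bound on $u_t$ coming from the equation, and conclude that a nonnegative integrable function without spikes must decay. However, two of the embeddings you invoke are false for \emph{homogeneous} Besov spaces, and both occur at load-bearing points.

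First, $\dot B^2_{d,1}\not\hookrightarrow L^\infty$: the embedding $\dot B^s_{p,1}\hookrightarrow L^\infty$ holds only at the critical index $s=d/p$ (here $s=1$), and for $s>d/p$ the homogeneous space loses control of low frequencies ($\|\dot\Delta_j u\|_\infty\lesssim 2^{-j}\,2^{2j}\|\dot\Delta_j u\|_d$ blows up as $j\to-\infty$). So your claim $\int_0^\infty\|u\|_{L^\infty}\lesssim\int_0^\infty\|u\|_{\dot B^2_{d,1}}<\infty$ is not justified. The paper repairs this by interpolating between $L^\infty(\R_+;\dot B^{2-\alpha}_{d,1})$ and $L^1(\R_+;\dot B^2_{d,1})$ to get $u\in L^m(\R_+;\dot B^1_{d,1})$ with $m=\alpha/(\alpha-1)<\infty$, which is enough: the $L^m$ average of $\|u\|_{L^\infty}$ over $[k,k+1]$ still tends to $0$. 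Second, $\dot B^{2-\alpha}_{d,1}\not\hookrightarrow L^\infty$ either, since $2-\alpha<1$; so knowing the right-hand side of the momentum equation lies in $L^1(\R_+;\dot B^{2-\alpha}_{d,1})$ does not put $u_t$ in $L^1(\R_+;L^\infty)$. Here one must also use the extra derivative provided by Theorem \ref{main-intro} ($\nabla u$ has the same regularity as $u$) to place $u_t$ in $L^1(\R_+;\dot B^{2-\alpha}_{d,1}\cap\dot B^{3-\alpha}_{d,1})\subset L^1(\R_+;\dot B^1_{d,1})\hookrightarrow L^1(\R_+;L^\infty)$, which is what the paper does. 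Your concluding ``lighter route'' does not work as stated for a similar reason: the uniform-in-time $\dot B^{2-\alpha}_{d,1}$ bound controls \emph{low} frequencies in $L^\infty$ (with a bound, not decay), while it is the $L^1$-in-time $\dot B^2_{d,1}$ information that controls the high-frequency tail, and that is not uniform in $t$; so the dyadic splitting alone cannot bypass the estimate on $u_t$.
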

%Of course the most natural case concerns the data with compactly supported $\rho$ such as in \cite{tad3}, for a smooth communication weight.
\medbreak
System \eqref{sys} is  comparable to the classical compressible Navier-Stokes system. 
The main difference for us is the lack of an  `effective viscous flux' (like $-\div u + p(\rho)$ for the compressible
Navier-Stokes system) enabling us to glean some time-decay or compactness for  the density. 
Recall that exhibiting such a quantity is the cornerstone of the proof of global 
results both for weak solutions \cite{fei,lions,no-st} or  regular solutions \cite{Dan,MN,Mucha}.
 In the case of \eqref{sys}, the only way to control regularity of the density is through the velocity, whence the need  of the $L^1$ integrability in time of suitable norms of $u.$
 This is of course closely  connected with our choice of Besov spaces of the type $\dot B^s_{p,1}$.
\medbreak
The paper is organized as follows. In Section \ref{prel} we introduce the notation and basic definitions and tools related to Besov spaces. Then, in Section \ref{res}, we reformulate the main result
 and give an overview  of the proof. Finally, in Section \ref{prof}, we prove Theorem \ref{main-intro}. 
 For the reader convenience,  we include in the appendix basic existence results for the continuity and the linearized
 momentum equations in Besov spaces and the proof of Corollary \ref{coro1}.

%%%%%%%%%%%%%%%%%%%%%%%%%%%%%%%%%%%%%%%%%%%%%%%%%%%%%

\section{Preliminaries}\label{prel}

Let us shortly introduce the main notation of  the paper. By ${\mathcal S}$ we denote the Schwartz space and consequently ${\mathcal S}^{'}$ is the space of tempered distributions. 
 The Fourier transform of $u$ with respect to the space variable is denoted by ${\mathcal F}u = \widehat{u}$. We shall  use $\|\cdot\|_p$ to denote the norm in the space $L^p(\R^d).$ 
Finally, we use the abbreviated form $L^p_TX$ for the  space $L^p(0,T; X)$ and 
$L^pX$ means $L^p(\R_+;X).$  
 Throughout the paper the letter $C$ denotes a generic constant.
\medbreak

Let us next  introduce the \emph{fractional Laplacian} and \emph{homogeneous Besov spaces}.

\subsection*{Fractional Laplacian}
Let $u: \R^d \mapsto \R$ be a Schwartz function and $\alpha \in (0,2)$. Then, the fractional Laplacian of $u$ is given by
\begin{equation} \label{fraclap}
( - \Delta)^{\alpha /2} u (x) = -c_{d,\alpha} \ \pv \int_{\R^d} \frac{u(x) - u(y)}{|x-y|^{d+\alpha}} \diff y \ .
\end{equation}
Here $\pv$ stands for the principal value of integral and $c_{d,\alpha}$ is a dimensional constant 
that ensures that  for all  smooth enough function $u$ on $\R^d,$ 
we have\footnote{According to e.g.  \cite[formula (2.15)]{bucur},   we have
$\displaystyle{c_{d,\alpha} = \frac{2^{\alpha} \Gamma ( d/2 + \alpha/2 )}{\pi^{d/2} \Gamma (- \alpha /2 )}\cdotp}$}
$$
( - \Delta)^{\alpha/2} u= \mathcal F^{-1} \{ | \xi|^{\alpha} \widehat u ( \xi ) \}\cdotp
$$
As can be easily observed by looking on the Fourier side, operator 
$(-\Delta)^{\alpha/2}$ maps the subspace $\cS_0$  of $\cS$ functions
with Fourier transform supported away from the origin, to itself.

\subsection*{Besov spaces}
 Let $\chi$ be a smooth function compactly supported in the ball $B(0,\frac{4}{3}),$ and  set $\varphi(\xi):=\chi(\xi/2)-\chi(\xi)$ 
 so that $\varphi$ is  smooth, supported in the annulus $B(0,\frac{8}{3})\setminus B(0,\frac{3}{4})$ and fulfills 
 $$ \sum_{j \in \Z} \varphi ( 2^{-j}  \xi )=1\quad\hbox{on}\quad \R^d \setminus \{ 0 \}.$$ 
  Let us introduce
\begin{equation*}
h = \mathcal F^{-1} \varphi  \andf  \widetilde h = \mathcal F^{-1} \chi.
\end{equation*}
The homogeneous dyadic blocks $\dot \Delta_j$ are defined for all $j \in \Z$  by
\begin{equation*}
\dot \Delta_{j} u := 2^{jd} \int_{\R^d} h(2^j y) u(x-y) \diff y =\cF^{-1}\bigl(\varphi(2^{-j}\cdot)u\bigr).
\end{equation*} 
The homogeneous low-frequency cut-off operator $\dot S_j$ is defined by
\begin{equation*}
\dot S_j u := 2^{jd} \int_{\R^d} \widetilde h(2^j y) u(x-y) \diff y=\cF^{-1}\bigl(\chi(2^{-j}\cdot)u\bigr).
\end{equation*}
All of the above operators are bounded in $L^p$ with norms independent of $j$ and $p$.
\medbreak
For $ s \in \R$ and $p,q \in [1, \infty],$ we introduce the following homogeneous Besov (semi)-norms~:
\begin{equation*}
\| u \|_{ \dot B^s_{p,q}} : = \left\| 2^{js} \| \dot \Delta_j u\|_p\right\|_{\ell^q(\Z)}\cdotp
\end{equation*}
Then, following \cite{danchin}, we define the homogeneous Besov space  to be the set of tempered 
distributions $u$ so that 
$$\|u\|_{\dot B^s_{p,q}}<\infty\andf \lim_{j\to-\infty} \|\dot S_ju\|_\infty=0.$$
The low frequency condition guarantees that  $\dot B^s_{p,q}$ is a normed space.
\medbreak
In the case $s\in(0,1),$  equivalent  norms may be defined in terms 
of finite difference. More precisely,  for a given function $f: \R^d \mapsto \R^m$ and $y \in \R^d,$ let us denote 
\begin{equation}\label{delta}
\di{y} f(x) := f(x+y) - f(x).
\end{equation}
Then we have (see  e.g.  \cite[p. 74]{danchin})  for all  function $u$  in $\cS$,  
the following equivalence: 
\begin{align}\label{nbes2}
c\| u \|_{\dot B^s_{p,q}}\leq\left\| \frac{\|\di{y} u \|_p}{|y|^{d+s}} \right\|_q\leq C\| u \|_{\dot B^s_{p,q}},
\end{align}
where the positive constants $c$ and $C$ depend only on $s,$ $d$ and $p.$ 
\medbreak
As in the rest of the paper, we will only consider Besov spaces with finite $p,$ and $q=1,$ 
we just enumerate in the following lemma  the most important properties of those spaces 
(see \cite[Chap. 2]{danchin} for more details). 

% {\bf R. One can sketch the proof of some items if you think it is needed.
% 
% Piotr: we can add as s referee will ask for it}

\begin{lem}[Basic properties of homogeneous Besov spaces]\label{super}
\begin{enumerate}[label=(\alph*)] 
\item  The space $ \dot B^s_{p,1}$ is complete whenever  $s\leq d/p.$
\item \label{dense} If  $p$ is finite  then $\cS_0$ is dense in $\dot B^s_{p,1},$ and so does $\cS$ if $s>-d/p'.$  

\item
We have the continuous embedding $\dot B^s_{p,1}\hookrightarrow \dot B^{s-d/p+d/q}_{q,1}$ for
all $1\leq p\leq q\leq\infty,$ and the homogeneous Besov space $ \dot B^1_{d,1}$ is continuously embedded in the space of continuous functions vanishing at infinity.
\item For all $s_1<s_2$ and $\theta\in (0,1)$ we have
$\dot{B}^{s_1}_{p,1}\cap \dot B^{s_2}_{p,1}\subset \dot{B}^{\theta s_1 + (1-\theta)s_2}_{p,1}$ and
$$
\|u\|_{\dot{B}^{\theta s_1 + (1-\theta)s_2}_{p,1}}\leq \|u\|^\theta_{\dot{B}^{s_1}_{p,1}}\|u\|^{1-\theta}_{\dot{B}^{s_2}_{p,1}}.
$$

\item \label{besovineq}   For any $s\in\R$ and $p\in[1,\infty],$ the gradient operator maps 
$\dot B^{s+1}_{p,1}$ to $\dot B^{s}_{p,1}$.
\item For any $s\leq d/p$ and $p\in[1,\infty)$ the operator $(- \Delta)^{\alpha /2}$ maps   $\dot B^{s+\alpha}_{p,1}$
to  $\dot B^s_{p,1}.$
\item \label{prod}
%If $s_1,s_2 < 1$ or $s_1=s_2=1$ and $s_1+s_2 >0$, {\bf (JP: we use $s_1=1$ and $s_2<1$ but somehow it is excluded from the lemma, also it is not clear how 'and' and 'or' relate to each other)} then there exists a positive constant $C$ such that
%\begin{equation*}
%\| uv \|_{\dot B^{s_1+s_2-1}_{d,1}} \leq C \| u \|_{\dot B^{s_1}_{d,1}} \| v \|_{\dot B^{s_2}_{d,1}}
%\end{equation*} 
The space $\dot B^1_{d,1}$ is stable by product, and
 there exists $C>0$ independent of $u$ and $v$ such that we have
\begin{equation*}
\| uv \|_{\dot B^{1}_{d,1}} \leq C \| u \|_{\dot B^{1}_{d,1}} \| v \|_{\dot B^{1}_{d,1}}.
\end{equation*}
\item For any $\alpha \in [1,2]$ and $\theta\in[0,\alpha-1],$  we have 
% {\bf R. Only $\theta=0$ is needed.}
\begin{equation} \label{prodalpha}
\| uv \|_{\dot B^{2- \alpha}_{d,1}} \leq C \| u \|_{\dot B^{1-\theta}_{d,1}} \| v \|_{\dot B^{2+\theta- \alpha}_{d,1}} \,;
\end{equation}
\item \label{commutator}
Let $w$ be a vector field over $\R^d$. Define 
\begin{equation*}
R_j := [ w \cdot \nabla , \Delta_j]u =  w \cdot \nabla \dot \Delta_j u - \dot \Delta_j ( w \cdot \nabla u ) \ .
\end{equation*}
If $-1<s\leq2$ then there exists a positive constant $C$, independent of $u$ and $w$, such that 
\begin{equation*}
\| R_j \|_{\dot B^s_{d,1}} \leq C \| \nabla w \|_{\dot B^1_{d,1}} \|u \|_{\dot B^s_{d,1}} .
\end{equation*}
%and, if in addition $s>0,$ 
%\begin{equation} \label{comm2}
%\| R_j \|_{\dot B^s_{d,1}} \leq C \left( \| \nabla w \|_{ L^\infty } \|u \|_{\dot B^s_{d,1} } + \| \nabla u \|_{L^\infty} \| \nabla w \|_{ \dot B^{s-1}_{d,1} } \right)  \ .
%\end{equation}
% 
% {\bf R. Where is it needed~?} {\bf Piotr: sprawdzicie to to} {\bf B.W: it is not, for now i commented it out }
% 
\end{enumerate}
\end{lem}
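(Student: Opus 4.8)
\textbf{The approach.} The plan is to follow the standard Bony paraproduct decomposition of the commutator $R_j = [w\cdot\nabla,\dot\Delta_j]u$, writing $w\cdot\nabla u = T_{w}\cdot\nabla u + T_{\nabla u}\cdot w + R(w,\nabla u)$ (with $T$ the paraproduct and $R$ the remainder), and then commuting $\dot\Delta_j$ with each of the three pieces. The only genuinely delicate term is the one coming from the low-frequency part of $w$ multiplying high frequencies of $\nabla u$, i.e. the commutator $[\dot S_{j-1}w\cdot\nabla,\dot\Delta_j]\dot\Delta_{j'}u$ for $|j'-j|\le N_0$; there one exploits the classical first-order Taylor / mean-value trick: writing $\dot\Delta_j$ as convolution against $2^{jd}h(2^j\cdot)$, one gets
\begin{equation*}
[\dot S_{j-1}w\cdot\nabla,\dot\Delta_j]v(x) = 2^{jd}\int_{\R^d} h(2^j z)\,\bigl(\dot S_{j-1}w(x) - \dot S_{j-1}w(x-z)\bigr)\cdot\nabla v(x-z)\diff z,
\end{equation*}
and bounds $|\dot S_{j-1}w(x)-\dot S_{j-1}w(x-z)|\le |z|\,\|\nabla \dot S_{j-1}w\|_\infty$, which produces a gain of $2^{-j}$ that compensates the derivative landing on $v$. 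A convolution (Young) inequality in $L^d$ then gives $\|[\dot S_{j-1}w\cdot\nabla,\dot\Delta_j]v\|_d \lesssim \|\nabla\dot S_{j-1}w\|_\infty\,\|v\|_d$, and since $\dot B^1_{d,1}\hookrightarrow L^\infty$ (part (c)) one has $\|\nabla\dot S_{j-1}w\|_\infty \lesssim \|\nabla w\|_{\dot B^1_{d,1}}$ uniformly in $j$.

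\textbf{The bookkeeping.} I would organize the estimate term by term. For the first paraproduct contribution $[T_{w\cdot\nabla},\dot\Delta_j]u$, after telescoping over the finitely many frequency blocks of $u$ that interact, I use the mean-value trick above on each and sum; the output is a sequence $c_j$ with $\|c_j\|_{\ell^1}\lesssim \|\nabla w\|_{\dot B^1_{d,1}}$. For the second paraproduct term $\dot\Delta_j(T_{\nabla u}\cdot w)$ and the remainder term $\dot\Delta_j R(w,\nabla u)$ — these are \emph{not} true commutators, since $\dot\Delta_j$ does not interact with the low-frequency factor $\dot S_{j'-1}(\nabla u)$ or with the remainder in a cancelling way — one simply estimates them directly: $\dot\Delta_j(T_{\nabla u}\cdot w)$ is supported in an annulus of size $2^j$, and $\|\dot S_{j'-1}\nabla u\|_\infty \lesssim 2^{j'(1-s)}\,2^{j's}\|\dot\Delta_{j'}u\|_d$-type bounds (valid precisely because $s\le 2<\infty$, using $\dot B^1_{d,1}\hookrightarrow L^\infty$ on $\nabla u$ whenever $s\le 2$, i.e. $\nabla u \in \dot B^{s-1}_{d,1}$ with $s-1\le 1$), paired with $\|\dot\Delta_{j'}w\|_d$; summing the resulting geometric-type series in $\ell^1$ requires exactly $s\le 2$ on the high side. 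The remainder $R(w,\nabla u)$ is the term forcing the lower bound $s>-1$: it is a sum over $|j_1-j_2|\le 1$ of $\dot\Delta_{j_1}w\cdot\dot\Delta_{j_2}\nabla u$, spectrally supported in a \emph{ball} of radius $\sim 2^{j_1}$, so after applying $\dot\Delta_j$ one must sum over $j_1\ge j-N_0$, and the summability in $\ell^1$ of the $2^{js}$-weighted sequence needs $s+1>0$, i.e. $s>-1$.

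\textbf{The main obstacle.} The routine part is the paraproduct decomposition; the part requiring care is getting the \emph{right gain of one derivative} uniformly in $j$, so that the estimate closes at the homogeneous level without any loss — this is where the first-order Taylor expansion of $\dot S_{j-1}w$ (rather than a crude $L^\infty$ bound on $w$ itself) is essential, and where one must be careful that it is $\|\nabla w\|_{\dot B^1_{d,1}}$ and not $\|w\|_{\dot B^1_{d,1}}$ that appears: the commutator kills the constant part of $w$, so only derivatives of $w$ can enter. Collecting the three pieces, each bounded by $c_j\|u\|_{\dot B^s_{d,1}}$ with $\|c_j\|_{\ell^1}\lesssim \|\nabla w\|_{\dot B^1_{d,1}}$, and noting that $R_j$ is spectrally localized (up to the harmless remainder part, which one absorbs by the $s>-1$ argument) yields $\|R_j\|_{\dot B^s_{d,1}} \le C\|\nabla w\|_{\dot B^1_{d,1}}\|u\|_{\dot B^s_{d,1}}$ for $-1<s\le 2$, as claimed.
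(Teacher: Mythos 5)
The paper does not actually prove this lemma: every item, including the commutator estimate (i), is established by citation to standard references (items (a)--(g) to various results in \cite{danchin}, item (i) to Lemma 2.100 there, and item (h) to the paraproduct/remainder continuity theorems). Your proposal addresses only item (i), and for that item your sketch is essentially a reproduction of the argument behind the cited Lemma 2.100: Bony decomposition of $w\cdot\nabla u$, the first-order Taylor/mean-value identity for the genuine commutator $[T_w\cdot\nabla,\dot\Delta_j]$ written as a convolution against $2^{jd}|z|h(2^jz)$ (whose $L^1$ norm supplies the crucial gain of $2^{-j}$ and explains why only $\nabla w$, not $w$, appears), direct estimation of the non-commutator paraproduct and remainder pieces, with the upper restriction $s\le 2$ coming from summing $\|\dot S_{j'-1}\nabla u\|_\infty$ and the lower restriction $s>-1$ from resumming the ball-supported remainder. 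So the content is correct and matches the source the authors rely on; the one caveat is that your proposal is silent on the remaining eight items (completeness, density, embeddings, interpolation, boundedness of $\nabla$ and $(-\Delta)^{\alpha/2}$, and the two product laws), which the statement also asserts — these are equally classical, and the paper likewise only cites them, but a self-contained proof of the lemma would have to at least say that.
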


\begin{proof}
For the proofs of the above statements we redirect the reader to standard references regarding Besov spaces. More specifically, $(a)$, 
\ref{dense},  $(c)$,  $(d)$,    $(e-f)$ and $(g)$ follow from \cite{danchin}, Theorem 2.25,  Proposition 2.27, Proposition 2.39, 
Proposition 2.22,    Proposition  2.30 and Corollary 2.54, respectively. 
As for \ref{commutator}, it is a particular case of Lemma 2.100.
Finally, \eqref{prodalpha}  stems from the results of continuity of the remainder
and paraproduct operators that are stated in  \cite[Theorems 2.47 and 2.52]{danchin}
 (see also \cite[Lemma 1.5]{Dan-Fourier} for a more detailed proof). 
\end{proof}

%%%%%%%%%%%%%%%%%%%%%%%%%%%%%%%%%%%%%%%

\section{Renormalized system and  restatement of the main result}\label{res}

In order to properly define all elements of system \eqref{sys} in low regularity, we renormalize it  in three steps.
First, we rewrite $\eqref{sys}_1$ as an equation on $\sigma:=\rho-1$. Next, we  divide
 $\eqref{sys}_2$  by $\rho$ and 
 replace $\rho$ by $1+\sigma$ in the last term of $\eqref{sys}_2.$
 Taking advantage of \eqref{fraclap} and denoting  $\mu:=c_{d,\alpha}^{-1},$ 
  we eventually  obtain
\begin{align}\label{ren}
\left\{\begin{array}{rcl}
\sigma_t + \div ( \sigma u  )  &=& -\div u,\\[1.5ex] 
u_t + \mu  ( - \Delta)^{\alpha /2} u  &= &F(u,\sigma)
\end{array}
\right.
\end{align}
with 
\begin{multline}\label{iii}
\hspace{2cm} F(u, \sigma):= I_\alpha(u,\sigma) - \mu\sigma\,( - \Delta)^{\alpha /2} u -  (u \cdot\nabla) u\\
\andf I_\alpha (u , \sigma) :=  \pv \int_{\R^d} \biggl(\frac{u(x+y)- u(x)}{|y|^{d+\alpha}}\biggr) \bigl( \sigma (x+y) - \sigma (x) \bigr) \diff y.
\end{multline}
Throughout the paper we consider system $\eqref{ren}$, noting that it is equivalent to \eqref{sys} for sufficiently smooth solutions.
The main result of the paper restated in the above setting reads as follows:
\begin{theo}\label{main}
Assume that  $\alpha \in (1,2).$  There exists $\varepsilon>0$, such that the fractional Euler system $\eqref{ren}$ with initial data $u_0$ and $\sigma_0$  satisfying
\begin{equation}\label{eq:smallness1}
\|u_0\|_{\dot B^{2-\alpha}_{d,1}} + \|\sigma_0\|_{\dot B^{1}_{d,1}} <\varepsilon
\end{equation}
and $\nabla u_0\in \dot B^{2-\alpha}_{d,1},$ $\nabla\sigma_0\in\dot B^{1}_{d,1}$
admits a unique global in time solution $(\sigma,u)$ with
$$
u,\nabla u \in L^1(\R_+;\dot B^{2}_{d,1}) \cap \cC_b(\R_+; \dot{B}^{2-\alpha}_{d,1})
\andf \sigma,\nabla\sigma \in \cC_b(\R_+; \dot B^{1}_{d,1}).
$$
If Condition \eqref{eq:smallness1} if fulfilled only by $\sigma_0$ then there exists a time $T>0$ that may be bounded from below in terms of the norms of the data, and such that
system $\eqref{ren}$ with initial data $u_0$ and $\sigma_0$ admits a   unique  solution $(\sigma,u)$  on $[0,T]$ with
\begin{equation}\label{eq:ET}
u,\nabla u \in L^1(0,T;\dot B^{2}_{d,1}) \cap \cC_b([0,T]; \dot{B}^{2-\alpha}_{d,1})
\andf \sigma,\nabla\sigma \in \cC_b([0,T]; \dot B^{1}_{d,1}).\end{equation}
\end{theo}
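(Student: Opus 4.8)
The plan is to set up a standard fixed-point/continuation argument adapted to the mixed hyperbolic--parabolic structure of \eqref{ren}. I would work in the space
$$E_T := \Bigl\{(\sigma,u): \ u,\nabla u\in L^1(0,T;\dot B^2_{d,1})\cap\cC_b([0,T];\dot B^{2-\alpha}_{d,1}),\ \sigma,\nabla\sigma\in\cC_b([0,T];\dot B^1_{d,1})\Bigr\},$$
equipped with the obvious norm, and decompose the proof into (i) a priori estimates, (ii) existence via an iterative scheme, (iii) uniqueness, and (iv) the two regimes (global-small vs.\ local-large-density). The key linear ingredients are already quoted: for the velocity equation $u_t+\mu(-\Delta)^{\alpha/2}u=F$, maximal regularity for the fractional heat semigroup gives $\|u\|_{L^1_T\dot B^2_{d,1}}+\|u\|_{L^\infty_T\dot B^{2-\alpha}_{d,1}}\lesssim \|u_0\|_{\dot B^{2-\alpha}_{d,1}}+\|F\|_{L^1_T\dot B^{2-\alpha}_{d,1}}$ (and the same with one extra derivative), while for the density equation $\sigma_t+u\cdot\nabla\sigma=-\div u-\sigma\div u$ the transport estimate in $\dot B^1_{d,1}$, using the commutator bound in Lemma~\ref{super}\ref{commutator}, yields $\|\sigma\|_{L^\infty_T\dot B^1_{d,1}}\lesssim \bigl(\|\sigma_0\|_{\dot B^1_{d,1}}+\|\div u\|_{L^1_T\dot B^1_{d,1}}\bigr)\exp\bigl(C\|\nabla u\|_{L^1_T\dot B^1_{d,1}}\bigr)$, and likewise for $\nabla\sigma$. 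Crucially, since $\nabla u\in L^1_T\dot B^1_{d,1}$ is controlled by $\|u,\nabla u\|_{L^1_T\dot B^2_{d,1}}$, the exponential factor is under control as soon as the velocity norm is small, so the density norm does \emph{not} grow — this is the whole point of the $L^1$-in-time Besov framework advertised in the abstract.

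Next I would estimate the nonlinear source $F(u,\sigma)=I_\alpha(u,\sigma)-\mu\sigma(-\Delta)^{\alpha/2}u-(u\cdot\nabla)u$ in $L^1_T\dot B^{2-\alpha}_{d,1}$ (and its gradient in the same space). For the transport term $(u\cdot\nabla)u$, the product law \eqref{prodalpha} with a suitable $\theta$ gives $\|(u\cdot\nabla)u\|_{\dot B^{2-\alpha}_{d,1}}\lesssim\|u\|_{\dot B^{1-\theta}_{d,1}}\|\nabla u\|_{\dot B^{2+\theta-\alpha}_{d,1}}$, then interpolation (Lemma~\ref{super}(d)) between $\dot B^{2-\alpha}_{d,1}$ and $\dot B^2_{d,1}$ and a Cauchy--Schwarz in time reduces this to a product of the $\cC_b\dot B^{2-\alpha}$-norm and the $L^1_T\dot B^2$-norm. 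The term $\sigma(-\Delta)^{\alpha/2}u$ is handled the same way: $(-\Delta)^{\alpha/2}u\in\dot B^{2-\alpha-?}$ combined with $\sigma\in\dot B^1_{d,1}$ via \eqref{prodalpha}. The genuinely new object is $I_\alpha(u,\sigma)$, the nonlinear nonlocal commutator; I would estimate it by Bony decomposition or by the finite-difference characterization \eqref{nbes2}, writing $\dot\Delta_j I_\alpha$ and splitting the $y$-integral into $|y|\le 2^{-j}$ and $|y|\ge 2^{-j}$, using $\|\di{y}u\|_p\lesssim|y|\,\|\nabla u\|_p$ on the first piece and $\|\di{y}\sigma\|_p\lesssim|y|^{s}\|\sigma\|_{\dot B^s}$-type bounds on both, to obtain $\|I_\alpha(u,\sigma)\|_{\dot B^{2-\alpha}_{d,1}}\lesssim\|\nabla u\|_{\dot B^{2-\alpha}_{d,1}}\|\sigma\|_{\dot B^1_{d,1}}+\|u\|_{?}\|\nabla\sigma\|_{?}$, again bilinear in the solution norms with at least one factor carrying an $L^1_T\dot B^2$ weight. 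All these combine into a closed estimate $N(T)\le C\bigl(N_0+N(T)^2\bigr)$ for $N(T):=\|(\sigma,u)\|_{E_T}$ and $N_0:=\|u_0\|_{\dot B^{2-\alpha}_{d,1}}+\|\nabla u_0\|_{\dot B^{2-\alpha}_{d,1}}+\|\sigma_0\|_{\dot B^1_{d,1}}+\|\nabla\sigma_0\|_{\dot B^1_{d,1}}$ in the global case (after absorbing the exponential via smallness), and into $N(T)\le C N_0+C(T)N(T)^2$ with $C(T)\to0$ as $T\to0$ in the local case.

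For existence I would run a Friedrichs-type or iterative scheme: set $(\sigma^0,u^0)=(0,0)$, solve alternately the linear transport equation for $\sigma^{n+1}$ with drift $u^n$ and the linear fractional heat equation for $u^{n+1}$ with source $F(u^n,\sigma^n)$ (or $F(u^n,\sigma^{n+1})$), invoke the linear existence results quoted for the appendix, and use the a priori bounds above to show the sequence stays in a fixed ball of $E_T$. Passing to the limit is routine once one has uniform bounds plus a contraction estimate at one regularity level below; concretely I would show $(\sigma^{n+1}-\sigma^n,u^{n+1}-u^n)$ is a Cauchy sequence in the ``loss of one derivative'' space $\cC_b\dot B^{2-\alpha}_{d,1}\cap L^1_T\dot B^2_{d,1}$ for the velocity and $\cC_b\dot B^0_{d,1}$ (or $\dot B^{\alpha-1}_{d,1}$) for the density — this is exactly why the statement carries the extra derivative on the data, cf.\ the discussion after Theorem~\ref{main-intro}. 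Uniqueness follows from the same difference estimate applied to two solutions. The continuation/blow-up criterion together with the a priori bound then upgrades the local solution to a global one under \eqref{eq:smallness1}, while in the large-data case one simply takes $T$ small enough that $C(T)N_0<1/4$, say.

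The main obstacle, as the authors themselves flag, is the nonlocal term $I_\alpha(u,\sigma)$: it is a bilinear nonlocal operator of order $\alpha$ that must be shown to map the solution space boundedly into $L^1_T\dot B^{2-\alpha}_{d,1}$ \emph{and} to be Lipschitz at the lower regularity level used for uniqueness, where the difference $\di{y}u$ and $\di{y}\sigma$ no longer have enough smoothness to absorb the singular kernel $|y|^{-d-\alpha}$ by crude difference bounds. Managing this forces the choice of parameters ($\theta$ in \eqref{prodalpha}, the regularity exponents) and is the reason the proof works with $\dot B^1_{d,1}$ and $\dot B^{2-\alpha}_{d,1}$ rather than the scaling-critical spaces $\dot B^0_{d,1}$ and $\dot B^{1-\alpha}_{d,1}$. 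A secondary but essential point is the time-uniformity of the density estimate: one must check that $\|\nabla u\|_{L^1_T\dot B^1_{d,1}}$ stays bounded as $T\to\infty$, which is delivered by the maximal-regularity gain $u\in L^1\dot B^2_{d,1}$ in the momentum equation and the smallness of the data, closing the loop with no damping in the continuity equation.
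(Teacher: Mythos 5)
Your outline does follow the paper's overall architecture (regularized data, a decoupled iterative scheme, critical-level estimates, higher-order estimates, a contraction argument, and a separate treatment of the large-velocity case), but two of its load-bearing steps are left in a state where they would fail as written. The first is the bilinear estimate for $I_\alpha$, which is the paper's main technical contribution (Lemma \ref{cruc}): the correct bound is $\|I_\alpha(u,\sigma)\|_{\dot B^{2-\alpha}_{d,1}}\le K\|\nabla u\|_{\dot B^{1-\theta}_{d,1}}\|\sigma\|_{\dot B^{1+\theta}_{d,1}}$ for $\theta\in[0,\min(2-\alpha,\alpha-1)]$. The form you guess, $\|\nabla u\|_{\dot B^{2-\alpha}_{d,1}}\|\sigma\|_{\dot B^{1}_{d,1}}$, is \emph{false}: testing it on the rescaling $u(\lambda\cdot)$, $\sigma(\lambda\cdot)$ shows the left-hand side scales like $\lambda$ while the right-hand side scales like $\lambda^{2-\alpha}$, so the inequality breaks at high frequencies for every $\alpha>1$ (equivalently, the inputs carry total regularity $4-\alpha$ where the output $\dot B^{2-\alpha}_{d,1}$ demands total regularity $3$ after paying $\alpha$ for the kernel and $1$ for the product). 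Moreover the proof is not a routine dyadic splitting: one must first peel off the zeroth-order piece $B=\nabla u\cdot T\sigma$, where $T$ is a Fourier multiplier of order $\alpha-1$ whose symbol has to be analyzed by hand to get $\|\dot\Delta_jT\sigma\|_d\lesssim2^{j(\alpha-1)}\|\dot\Delta_j\sigma\|_d$, and then control the remainder via the second-difference characterization \eqref{nbes2} with the Hölder pairing $1/q+1/q'=1/d$ dictated by the embeddings $\dot B^{1-\theta}_{d,1}\hookrightarrow\dot B^{\alpha-1}_{q,1}$ and $\dot B^{1+\theta}_{d,1}\hookrightarrow\dot B^{2-\alpha}_{q',1}$. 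Leaving this step with question marks leaves the argument without its engine.

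The second gap concerns how the estimates close. Your single inequality $N(T)\le C\bigl(N_0+N(T)^2\bigr)$, with $N_0$ containing $\|\nabla u_0\|_{\dot B^{2-\alpha}_{d,1}}$ and $\|\nabla\sigma_0\|_{\dot B^{1}_{d,1}}$, can only be bootstrapped if \emph{all} of $N_0$ is small, whereas the theorem assumes smallness only of the critical norms \eqref{eq:smallness1}. The paper avoids this by a two-tier closure: first the critical bounds \eqref{eq:smallcritic} using only $\varepsilon,\eta$ small, then the higher-order bound \eqref{eq:boundhigh}, which is \emph{linear} in the higher-order norms with coefficients rendered small by the already-established critical bounds; without this hierarchy you prove a strictly weaker statement. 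The same issue is more acute in the local case: ``take $T$ small so that $C(T)N_0<1/4$'' does not work, because the transport estimate gives no decay of $\|\sigma\|_{L^\infty_T\dot B^1_{d,1}}$ as $T\to0$ and Proposition \ref{fracdiffestimates} bounds $\|u\|_{L^1_T\dot B^2_{d,1}}$ by $\|u_0\|_{\dot B^{2-\alpha}_{d,1}}$ with a $T$-independent constant. Consequently the term $\sigma(-\Delta)^{\alpha/2}u$ forces $\sigma_0$ to be small even for local existence (as the theorem indeed assumes), and the smallness in time of $\|u\|_{L^1_T\dot B^2_{d,1}}$ must be extracted by interpolating through the higher-order norms, $\|u\|_{L^1_T\dot B^2_{d,1}}\lesssim T^{1/\alpha}\|\nabla u\|_{L^\infty_T\dot B^{2-\alpha}_{d,1}}^{1/\alpha}\|\nabla u\|_{L^1_T\dot B^2_{d,1}}^{1-1/\alpha}$, which is precisely the mechanism of the paper's Step 7 and is absent from your sketch.
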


Let us briefly present the main steps of the proof of Theorem \ref{main}.
Renormalization of \eqref{sys} into \eqref{ren} is the primary idea behind our approach, as it reduces our problem to the coupling of two  well known equations, namely
\begin{align}
\partial_t\sigma + \div(u\sigma) = f_1,\label{known1}\\
\partial_t u + \mu(-\Delta)^\frac{\alpha}{2}u = f_2.\label{known2}
\end{align}
As $\alpha\in(1,2)$ and the external forces $f_1$ and $f_2$ are smooth,  strong existence to equations \eqref{known1} and \eqref{known2} is relatively easy to obtain.
In our case however, the external forces are dependent on $u$ and $\sigma$, which makes the problem more complicated and pushes us to consider small initial data for getting a global statement. 
The main difficulty comes 
from the nonlocal bilinear term $I_\alpha(u,\sigma)$ appearing it \eqref{iii}. 
It will be handled thanks to  the following lemma:
\begin{lem}\label{cruc}
Assume that $\alpha\in(1,2)$ and that $\theta\in[0,\min(2-\alpha,\alpha-1)].$ 
 There exists $K>0$ such that for all  $u$ and $\sigma$
in $\cS(\R^d),$ we have 
% {\bf R. Only $\theta=0$ is needed.}
\[ \| I_\alpha(u,\sigma) \|_{\dot B^{2- \alpha}_{d,1}} \leq K \|\nabla u\|_{ \dot B^{1-\theta}_{d,1}} \|\sigma\|_{\dot B^{1+\theta}_{d,1}} \ , \]
where $I_\alpha$ is defined by (\ref{iii}).
\end{lem}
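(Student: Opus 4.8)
The plan is to view $I_\alpha(u,\sigma)$ as a bilinear paraproduct-type operator. Since $u,\sigma\in\cS(\R^d)$ and $\alpha<2$, the integral in \eqref{iii} converges absolutely, so with the finite-difference notation \eqref{delta} one may write $I_\alpha(u,\sigma)(x)=\int_{\R^d}|y|^{-d-\alpha}\,\di{y}u(x)\,\di{y}\sigma(x)\diff y$. On the Fourier side this is the bilinear multiplier with symbol $\mu\bigl(|\xi+\eta|^\alpha-|\xi|^\alpha-|\eta|^\alpha\bigr)$, i.e.\ the ``Leibniz defect'' $\mu\bigl((-\Delta)^{\alpha/2}(u\sigma)-u(-\Delta)^{\alpha/2}\sigma-\sigma(-\Delta)^{\alpha/2}u\bigr)$ of the fractional Laplacian; this already signals that a full derivative should be gained and that the estimate is of paraproduct type, but I would argue directly from the integral. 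Decompose $u=\sum_j\dot\Delta_j u$ and $\sigma=\sum_k\dot\Delta_k\sigma$; as $\di{y}$ commutes with the Littlewood--Paley projectors, $I_\alpha(u,\sigma)=\sum_{j,k}I_\alpha(\dot\Delta_j u,\dot\Delta_k\sigma)$, and I split this double sum into the low--high part $\sum_{j\le k-2}$, the diagonal part $\sum_{|j-k|\le1}$, and the high--low part $\sum_{j\ge k+2}$. For each fixed $y$ the $x$-spectrum of $\di{y}\dot\Delta_j u\,\di{y}\dot\Delta_k\sigma$ lies in $\supp\varphi(2^{-j}\cdot)+\supp\varphi(2^{-k}\cdot)$, a property preserved by the $y$-integration; hence the low--high and high--low pieces are spectrally supported in a dyadic annulus of radius $\sim2^{\max(j,k)}$, and the diagonal one in a ball of that radius. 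This localisation is the only structural feature of $I_\alpha$ that is used.

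The core of the argument is a single-block estimate. If $a$ is spectrally localised in $\{|\xi|\sim2^j\}$ and $b$ in $\{|\xi|\sim2^k\}$, then the standard bound $\|\di{y}a\|_{L^p}\lesssim\min(|y|,2^{-j})\|\nabla a\|_{L^p}$, a Bernstein inequality applied to the factor of \emph{lower} frequency (to trade $L^d$ for $L^\infty$), and H\"older's inequality give
\[
\bigl\|\di{y}a\,\di{y}b\bigr\|_{L^d}\lesssim 2^{\min(j,k)}\,\min(|y|,2^{-j})\,\min(|y|,2^{-k})\,\|\nabla a\|_{L^d}\,\|\nabla b\|_{L^d}.
\]
Integrating against $|y|^{-d-\alpha}\diff y$ and using $\int_{\R^d}\min(|y|,2^{-j})\,\min(|y|,2^{-k})\,|y|^{-d-\alpha}\diff y\sim2^{-\max(j,k)(2-\alpha)}$ -- where $\alpha<2$ provides integrability near $y=0$ together with $2-\alpha>0$, while $\alpha>1$ is exactly what prevents a logarithmic loss over the intermediate range $2^{-\max(j,k)}\le|y|\le2^{-\min(j,k)}$ -- I obtain
\[
\|I_\alpha(\dot\Delta_j u,\dot\Delta_k\sigma)\|_{L^d}\lesssim 2^{\min(j,k)-\max(j,k)(2-\alpha)}\,\|\dot\Delta_j\nabla u\|_{L^d}\,\|\dot\Delta_k\nabla\sigma\|_{L^d}.
\]

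To conclude I would substitute $\|\dot\Delta_j\nabla u\|_{L^d}=2^{-j(1-\theta)}c_j\,\|\nabla u\|_{\dot B^{1-\theta}_{d,1}}$ and $\|\dot\Delta_k\nabla\sigma\|_{L^d}=2^{-k\theta}d_k\,\|\nabla\sigma\|_{\dot B^{\theta}_{d,1}}$, with $(c_j),(d_k)\ge0$ of unit $\ell^1$-norm, then multiply by $2^{\ell(2-\alpha)}$ and sum over $\ell$. The spectral localisation restricts the $\ell$-sum to $\ell\sim\max(j,k)$ in the two annular parts and to $\ell\lesssim\max(j,k)$ in the diagonal part, where $\sum_{\ell\le\max(j,k)}2^{\ell(2-\alpha)}\sim2^{\max(j,k)(2-\alpha)}$ since $2-\alpha>0$. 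Each of the three parts then collapses to a discrete convolution of the type $\sum_k d_k\sum_{j\le k-2}2^{-(k-j)\theta}c_j$, respectively $\sum_j c_j\sum_{k\le j-2}2^{-(j-k)(1-\theta)}d_k$ (and simply $\sum_k c_kd_k$ for the diagonal term), each bounded by $\|(c_j)\|_{\ell^1}\|(d_k)\|_{\ell^1}=1$ because for $0\le\theta\le1$ -- in particular throughout the stated range of $\theta$ -- the exponents $-(k-j)\theta$ and $-(j-k)(1-\theta)$ stay nonpositive along the summation. This gives $\|I_\alpha(u,\sigma)\|_{\dot B^{2-\alpha}_{d,1}}\lesssim\|\nabla u\|_{\dot B^{1-\theta}_{d,1}}\|\nabla\sigma\|_{\dot B^{\theta}_{d,1}}$, and Lemma~\ref{super}\ref{besovineq} (with $\nabla$ mapping $\dot B^{1+\theta}_{d,1}$ into $\dot B^{\theta}_{d,1}$) turns the last factor into $\|\sigma\|_{\dot B^{1+\theta}_{d,1}}$, as claimed.

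The step I expect to be the genuine obstacle is the single-block integral estimate -- in particular recognising that $\alpha>1$ is essential there -- together with the accompanying bookkeeping: one must put the $L^\infty$ norm on the \emph{lower}-frequency factor so that the geometric series in $|j-k|$ converges in the right direction, and one must track carefully the spectral localisation of the three pieces. The interchanges of the $y$-integral with the Littlewood--Paley sums and with $\dot\Delta_\ell$ are harmless for Schwartz data, which is all that is needed here.
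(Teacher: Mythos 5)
Your proof is correct, and it follows a genuinely different route from the paper's. The paper writes $\di{y}u(x)=y\cdot\int_0^1\nabla u(x+ty)\,dt$ and splits $I_\alpha=A+B$, where $B=\nabla u\cdot T\sigma$ with $T$ an explicit Fourier multiplier of degree $\alpha-1$ (handled by a kernel estimate plus the product law \eqref{prodalpha}), and where $A$ is estimated through the finite-difference characterization \eqref{nbes2} of $\dot B^{2-\alpha}_{d,1}$ (admissible since $2-\alpha\in(0,1)$), H\"older with conjugate exponents $1/q+1/q'=1/d$, and the embeddings $\dot B^{1\mp\theta}_{d,1}\hookrightarrow\dot B^{\cdot}_{q,1}$; it is precisely the requirement $q,q',r,r'\geq d$ in those embeddings that forces $\theta\leq\min(2-\alpha,\alpha-1)$. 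You instead perform a Bony-type decomposition of the bilinear operator itself and prove a single-block bound, $\|I_\alpha(\dot\Delta_ju,\dot\Delta_k\sigma)\|_{L^d}\lesssim2^{\min(j,k)-\max(j,k)(2-\alpha)}\|\dot\Delta_j\nabla u\|_{L^d}\|\dot\Delta_k\nabla\sigma\|_{L^d}$, which I checked: the inequality $\|\di{y}a\|_{L^p}\lesssim\min(|y|,2^{-j})\|\nabla a\|_{L^p}$, the Bernstein step on the lower-frequency factor, the three-region computation of $\int\min(|y|,2^{-j})\min(|y|,2^{-k})|y|^{-d-\alpha}\diff y\sim2^{-\max(j,k)(2-\alpha)}$ (where $\alpha<2$ and $\alpha>1$ enter exactly as you say), the spectral localisation of the three paraproduct pieces, and the resummation are all sound. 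Your approach buys a more self-contained argument that makes the role of $\alpha\in(1,2)$ transparent and in fact yields the estimate for the whole range $\theta\in[0,1]$, so the hypothesis $\theta\leq\min(2-\alpha,\alpha-1)$ is an artifact of the paper's method rather than of the statement; the paper's approach buys a cleaner identification of the leading term $\nabla u\cdot T\sigma$ and avoids the dyadic bookkeeping, at the cost of the restriction on $\theta$ and of the regularity window $2-\alpha\in(0,1)$ needed for \eqref{nbes2}.
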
 

\medskip 

The proof of Theorem \ref{main} can be summarized by the following steps:

\begin{enumerate}
\item  We begin with the proof of Lemma \ref{cruc}.

\item  We introduce an iterative scheme that, somehow, decouples system \eqref{ren}. It is complemented with smooth and decaying initial data that approximate $(\sigma_0, u_0).$ 
\item The iterative scheme produces a sequence  $(\sigma^n,u^n)$ of global smooth approximate solutions that solve equations of the form \eqref{known1} and \eqref{known2}. 
Assuming that  \eqref{eq:smallness1} is fulfilled, we obtain 
uniform estimates for the sequence in the critical space  
$L^\infty\dot{B}^1_{d,1}\times\bigl(L^1\dot{B}^2_{d,1}\cap L^\infty\dot{B}^{2-\alpha}_{d,1}\bigr).$
\item  We estimate $(\sigma^n, u^n)$  in higher-order spaces by differentiating equations on $\sigma^n$ and $u^n$.
\item Thanks to the higher-order estimates we are able to establish that 
$(\sigma^n,u^n)$ is a Cauchy sequence in the space 
$$\cC([0,T];\dot B^1_{d,1})\times\bigl(\cC([0,T];\dot{B}^{2-\alpha}_{d,1})
\cap L^1_T\dot{B}^2_{d,1}\bigr),\quad\hbox{for all }\ T\geq0.$$
\item We show that the limit $(\sigma,u)$ of that Cauchy sequence fulfills system \eqref{ren}
and that it does have the regularity stated in Theorem \ref{main}. 
\item Assuming only that $\sigma_0$ is small, 
we prove the existence of a time $T>0$  and of a solution $(\sigma,u)$ to \eqref{ren}
in the space \eqref{eq:ET}. 
\item We give a  proof of  uniqueness that requires only $\sigma$  
to be small for one of the two solutions.
\end{enumerate}

%%%%%%%%%%%%%%%%%%%%%%%%%%%%%%%%%%%%%%%%%%%%%%%%%%%%%%%%%

\section{Proof of the main result}\label{prof}

We begin with the study of  the nonlocal term.

\subsection*{Step 1: Proof of Lemma \ref{cruc}}
For   $u$   in $\mathcal S(\R^d),$ one may write, using the notation of   \eqref{delta},
\begin{align*}
\di{y} u (x) = u(x+y) - u(x) = y \cdot \int_0^1 \nabla u(x+ ty) \diff t \ ,
\end{align*}
so that the operator $I_\alpha$ can be written as
\begin{multline*}
I_\alpha ( u , \sigma) (x)  = \int_{\R^d} \int_0^1 \Bigl(\frac y{|y|^{d+ \alpha}}\cdot  \nabla u (x +ty) \Bigr) \left( \sigma(x+y) - \sigma(x) \right) \diff t \diff y  \\
					 =  \underbrace{\int_{\R^d} \int_0^1 \Bigl( \frac{y}{|y|^{d+ \alpha}} \cdot \di{yt} \nabla u (x)\Bigr) \di{y} \sigma(x) \diff t \diff y}_{=:A(x)}  + 
					 \underbrace{\int_{ \R^d}  \Bigl(\frac{y}{|y|^{d+\alpha}} \cdot\nabla u (x)\Bigr) \di{y} \sigma (x) \diff y}_{=:B(x)} \, . 
\end{multline*}
In light of \eqref{nbes2} and triangular inequality, one thus has 
\begin{equation*}
\| I_\alpha  \|_{ \dot B^{2-\alpha}_{d,1}} \leq \|A \|_{ \dot B^{2-\alpha}_{d,1}}+\| B\|_{ \dot B^{2-\alpha}_{d,1}}
\leq C \left\| \frac{ \| \di{h} A ( \ \cdot \ ) \|_{d} }{|h|^{d+2-\alpha} } \right\|_{1} +\| B\|_{ \dot B^{2-\alpha}_{d,1}}.
\end{equation*} 
Now,  the trivial identity 
\begin{equation*}
\di{y} (gf) (x) = g(x+y) \di{y} f(x) + f(x) \di{y} g(x).
\end{equation*}
implies that \begin{equation*}
\di{h} \left( \di{ty} \nabla u (x) \cdot \di{y} \sigma (x) \right) = \di{ty} \nabla u(x+h) \cdot \di{h} \di{y} \sigma(x) + \di{y} \sigma (x) \cdot \di{h} \di{ty} \nabla u (x),
\end{equation*}
whence 
\begin{multline*}
\di{h} A(x) = \underbrace{\int_{\R^d} \int_0^1  \Bigl(\frac{y}{|y|^{d+ \alpha}} \cdot \di{ty} \nabla u(x+h)\Bigr)  \di{h} \di{y} \sigma(x) \diff t \diff y}_{=:E(x,h)} + \\ + 
\underbrace{\int_{\R^d} \int_0^1  \Bigl(\frac{y}{|y|^{d+ \alpha}} \di{y} \sigma (x)\Bigr) \cdot \di{h} \di{ty} \nabla u (x) \diff t \diff y}_{=:F(x,h)}
\end{multline*}
which leads to
\begin{equation*}
 \left\| \frac{\|\di{h}  A ( \ \cdot  \ ) \|_d}{|h|^{d+2-\alpha}} \right\|_1 \leq \left\| \frac{\|E( \ \cdot  \ , h ) \|_d}{|h|^{d+2-\alpha}} \right\|_1 + 
 \left\| \frac{\|F ( \ \cdot  \ , h ) \|_d}{|h|^{d+2-\alpha}} \right\|_1 \ .
\end{equation*}
We estimate the term $ \left\| \frac{\|E ( \ \cdot  \ , h ) \|_d}{|h|^{d+2-\alpha}} \right\|_1$ in the following way (where exponents $q$ and $q'$ fulfill the relation $1/q+1/q'=1/d$):
$$
\begin{aligned}
 		 \left\| \frac{\|E ( \ \cdot  \ , h ) \|_d}{|h|^{d+2-\alpha}} \right\|_1 &= \\
= 		& \int_{\R^d} \frac{1}{|h|^{d+2-\alpha}}\left( \int_{\R^d} \left| \int_{\R^d} \int_0^1 \!\frac{ y }{|y|^{d+ \alpha}} \cdot \di{ty} \nabla u (x+h) \cdot \di{h} \di{y} \sigma(x) \diff t \diff y \right|^d \!\!\diff x \right)^{1/d}\!\! \diff h \\
\leq 	& \int_{\R^d} \frac{1}{|h|^{d+2-\alpha}}  \int_{\R^d}  \frac{ 1}{|y|^{d+ \alpha-1}} \cdot  \int_0^1\left\|  \di{ty} 
\nabla u ( \ \cdot \ + h) \cdot \di{h} \di{y} \sigma( \ \cdot \ )  \right\|_d \diff t  \diff y  \diff h \\
\leq 	& \int_{\R^d} \frac{1}{|h|^{d+2-\alpha}}  \int_{\R^d}  \frac{ 1}{|y|^{d+ \alpha-1}} \cdot
 \int_0^1\left\|  \di{ty} \nabla u ( \ \cdot \ + h) \right\|_q  \cdot \left\| \di{h} \di{y} \sigma  \right\|_{q'} \diff t  \diff y  \diff h \\
\leq	& \int_{\R^d} \frac{1}{|h|^{d+2-\alpha}}   \int_{\R^d}  \frac{ 1 }{|y|^{d+ \alpha - 1}}  \int_0^1\left\|  \di{ty} \nabla u\right\|_q  \cdot \left\| \di{h} \di{y} \sigma  \right\|_{q'} \diff t  \diff y  \diff h \\
=		&  \int_{\R^d}  \int_{\R^d}  \frac{\left\| \di{h} \di{y} \sigma  \right\|_{q'}}{|h|^{d+2-\alpha}} \cdot   \int_0^1 \frac{ \left\|  \di{ty} \nabla u\right\|_q }{|y|^{d+ \alpha -1}}   \diff t  \diff y  \diff h \\
\leq	&   \int_{\R^d} \sup_y \left\{    \frac{\left\| \di{h} \di{y} \sigma \right\|_{q'}}{|h|^{d+2-\alpha}} \right\} \cdot  
\int_0^1\int_{\R^d}  \frac{ \left\|  \di{ty} \nabla u\right\|_q }{|y|^{d+ \alpha -1}}   \diff t  \diff y  \diff h  \\
= 		& \frac 1\alpha  \int_{\R^d}  \frac{ \left\|  \di{z} \nabla u\right\|_q }{|z|^{d+ \alpha -1}} \diff z \cdot \int_{\R^d} \sup_y \left\{    \frac{\left\| \di{h} \di{y} \sigma  \right\|_{q'}}{|h|^{d+2-\alpha}} \right\} \diff h \ .
\end{aligned}
$$
Here we have used the fact that the norm of a function is translation invariant. As $\| \di{h} \di{y} \sigma \|_{q'} = \| \di{y} \di{h} \sigma \|_{q'} \leq 2 \| \di{h} \sigma \|_{q'}$ 
we finally obtain
\begin{equation*}
\left\| \frac{\|E ( \ \cdot  \ , h ) \|_d}{|h|^{d+2-\alpha}} \right\|_1 \leq C  \int_{\R^d}  \frac{ \left\|  \di{z} \nabla u\right\|_q }{|z|^{d+ \alpha -1}} \diff z \cdot \int_{\R^d} \frac{\left\|  \di{h} \sigma  \right\|_{q'}}{|h|^{d+2-\alpha}} \diff h \ .
\end{equation*}
We use Lemma \ref{super} to determine what the exponent $q$  should be  
to match with  our assumptions that $\nabla u \in \dot B^{1-\theta}_{d,1}$ and $\sigma \in \dot B^{1+\theta}_{d,1}$. 
\begin{itemize}
\item In order to obtain the embedding $ \dot B^{1-\theta}_{d,1} \subset \dot B^{\alpha - 1}_{q,1},$ [$2-\alpha \geq \theta$] we need 
$q\geq d$ and
\begin{equation*}
\frac {1+\theta}d = \frac {2- \alpha}d + \frac 1q \cdotp
\end{equation*}
\item In order to obtain the embedding $ \dot B^{1+\theta}_{d,1} \subset \dot B^{2-\alpha}_{q',1},$ we need 
$q'\geq d$ and
\begin{equation*}
\frac {1-\theta}d = \frac {\alpha-1}d + \frac 1{q'} \cdotp
\end{equation*}
\end{itemize}
These two conditions uniquely determine the exponents to be
\begin{equation*}
q = \frac{d}{\alpha+\theta -1} \andf q'= \frac{d}{2- \alpha-\theta}\cdotp
\end{equation*}
Clearly, $1/q+1/q'=1/d$ and our conditions on $\theta$ guarantee that  $q,q'\geq d.$
So, using the above embeddings, we conclude that
$$
\left\| \frac{\|E ( \ \cdot  \ , h ) \|_d}{|h|^{d+2-\alpha}} \right\|_1  \leq C \ \| \nabla u\|_{\dot B^{\alpha -1}_{q,1}} \| \sigma \|_{\dot B^{2-\alpha}_{q',1}} \leq C \ \| \nabla u \|_{\dot B^{1-\theta}_{d,1}} \| \sigma \|_{\dot B^{1+\theta}_{d,1}}.
$$
Analogous calculations can be performed for bounding the term with $F$: we introduce another two exponents $r$ and $r'$
such that $1/r+1/r'=1/d,$ and write that
$$
\begin{aligned}
 \left\| \frac{\|F( \ \cdot  \ , h ) \|_d}{|h|^{d+2-\alpha}} \right\|_1 & = \int_{\R^d} \frac{1}{|h|^{d+2-\alpha}}\left( \int_{\R^d} \left| \int_{\R^d} \int_0^1 \Bigl(\frac{ y}{|y|^{d+ \alpha}}  \di{y} \sigma (x)\Bigr) \cdot \di{h} \di{ty} \nabla u (x) \diff t \diff y \right|^d \!\diff x \right)^{1/d}\!\! \diff h \\ 
 & \leq \int_{\R^d} \frac 1{|h|^{d+2 - \alpha}} \int_{\R^d} \frac {1}{|y|^{d+\alpha-1}} \int_0^1 \| \di{y} \sigma ( \ \cdot \ ) \di{h} \di{ty} \nabla u ( \ \cdot \ ) \|_d \diff t \diff y \diff h \\
 & \leq \int_{\R^d} \frac 1{|h|^{d+2 - \alpha}} \int_{\R^d} \frac 1{|y|^{d+\alpha-1}} \int_0^1 \| \di{y} \sigma ( \ \cdot \ )\|_r  \| \di{h} \di{ty} \nabla u ( \ \cdot \ ) \|_{r'} \diff t \diff y \diff h \\
 & \leq \int_{\R^d} \frac 1{|h|^{d+2-\alpha}} \int_{\R^d} \frac 2{|y|^{d+\alpha-1}}  \| \di{y} \sigma ( \ \cdot \ )\|_r  \| \di{h}  \nabla u( \ \cdot \ ) \|_{r'} \diff y \diff h \\
 & \leq C \| \sigma \|_{\dot B^{\alpha-1}_{r,1}} \| \nabla u \|_{\dot B^{2 - \alpha}_{r',1}}.
\end{aligned}
$$
At this stage, we use the following embeddings (keeping in mind that $0\leq \theta\leq\alpha-1$):
$$\begin{aligned}
\dot B^{1+\theta}_{d,1}\hookrightarrow \dot B^{\alpha-1}_{r,1}&\with 
r=\frac{d}{\alpha-\theta-1}\\
\andf 
\dot B^{1-\theta}_{d,1}\hookrightarrow \dot B^{2-\alpha}_{r',1}&\with 
r'=\frac d{2-\alpha+\theta},\end{aligned}
$$
and eventually  get 
$$
 \left\| \frac{\|F( \ \cdot  \ , h ) \|_d}{|h|^{d+2-\alpha}} \right\|_1\leq  C \ \| \sigma\|_{\dot B^{1+\theta}_{d,1}} \| \nabla u \|_{\dot B^{1-\theta}_{d,1}}.
$$
For the  term $B,$ we  just have to use the fact  that 
\begin{equation*}
B = \nabla u \cdot T\sigma\with T\sigma(x):= 
\int_{\R^d} \frac{y}{|y|^{d + \alpha}}(\sigma(x+y) - \sigma (x) ) \diff y.
\end{equation*}
The Fourier multiplier  corresponding to $T$ may be computed as follows:
\begin{align*}
\mathcal F \{T \sigma \} ( \xi )  & = \int_{\R^d} \frac{y}{|y|^{d+ \alpha}} \widehat \sigma ( \xi) \left( e^{i \langle y , \xi \rangle} - 1 \right) \diff y \\
 & = \widehat \sigma ( \xi) | \xi |^{\alpha -1} \underbrace{\int_{\R^d} \frac{z}{|z|^{d  + \alpha}} \left( e^{i\langle z , \frac{\xi}{| \xi |} \rangle} - 1 \right) \diff z }_{=:R(\xi)}\ .
\end{align*}
The function $R $ is  smooth away of zero, constant on rays and bounded. 
Furthermore,  that $d+\alpha > d+1$  guarantees convergence at infinity, while  the features of 
$\Bigl( e^{i\langle z , \frac{\xi}{| \xi |} \rangle} - 1 \Bigr)$ controls the convergence near $z=0,$ since $\alpha<2.$

In order to show that the operator $T$ is bounded from 
$\dot B^{1}_{d,1}$ to $\dot B^{2 -\alpha}_{d,1},$ one may proceed  as follows. 
Take some smooth function  $\psi$ compactly supported away from $0$ and with value $1$
 in the neighborhood of $\supp \varphi$ (the function used in the definition of dyadic blocks). Then we have
\begin{equation*}
\mathcal F \{ \dot \Delta_j T \sigma \} ( \xi) = \varphi( 2^{-j} \xi ) | \xi |^{\alpha-1} R( \xi )  \widehat \sigma (\xi) = \left( \psi(2^{-j} \xi ) | \xi |^{\alpha-1} R ( \xi) \right) \mathcal F \{ \dot \Delta_j \sigma \} (\xi)
\end{equation*}
and by defining
\begin{equation*}
g_j(x) = \frac1{(2\pi)^d}\int_{\R^d} \psi( 2^{-j} \xi ) | \xi |^{\alpha -1 }R(\xi) e^{i \langle x, \xi \rangle} \diff \xi
\end{equation*}
we obtain by Young's inequality for convolutions:
\begin{equation*}
\| \dot \Delta_j T\sigma \|_d = \| g_j  * \dot \Delta_j \sigma \|_d \leq \| g_j \|_1 \| \dot \Delta_j \sigma \|_d \ .
\end{equation*}
Since, by a change of variables, we have
\begin{equation*}
g_j (x) = 2^{j(\alpha -1)} 2^{jd} \int_{\R^d} \psi(\eta) | \eta|^{\alpha-1} R(\eta) e^{i \langle 2^{-j} x , \eta \rangle } \diff \eta,
\end{equation*}
it is sufficient to show that the $L^1$-norm of the function
\begin{equation*}
g(x) = \int_{\R^d} \psi(\eta) |\eta|^{\alpha-1} R(\eta) e^{i \langle x , \eta\rangle } \diff \eta
\end{equation*}
is finite because $\| g_j ( \cdot ) \|_1 = 2^{j(\alpha-1)} \| 2^{jd} g(2^j \cdot ) \|_1 = \| g \|_1$. Using integration by parts we write
\begin{equation*}
\begin{split}
g(x) 	& = (1 +|x|^2)^{-d } \int_{\R^d} \psi(\eta) |\eta|^{\alpha-1} R(\eta) (\operatorname{Id} - \Delta_\eta)^d \left( e^{i \langle x , \eta \rangle } \right) \diff\eta \\
		& = (1 +|x|^2)^{-d } \int_{\R^d} (\operatorname{Id}- \Delta_\eta)^d \left( \psi(\eta) |\eta|^{\alpha-1} R(\eta)\right)  e^{i \langle x , \eta \rangle } \diff \eta\end{split}
\end{equation*}
from which it follows that $g$ is in $L^1$, since $| g(x) | \leq C (1+ |x|^2)^{-d}$. Hence
\begin{equation*}
\| \dot \Delta_j T \sigma \|_d \leq C 2^{j(\alpha -1)} \| \dot \Delta_j \sigma \|_d\quad\hbox{for all }\ j\in\Z.
\end{equation*}
By the assumption that $\sigma \in \dot B^{1+\theta}_{d,1},$ this  implies that $T\sigma \in \dot B^{2+\theta- \alpha}_{d,1}$.
Finally, since  the product maps $\dot B^{1-\theta}_{d,1} \times \dot B^{2+\theta-\alpha}_{d,1}$ to $\dot B^{2-\alpha}_{d,1}$ by (h) in Lemma \ref{super} if $0\leq \theta\leq \alpha-1,$ 
we get the desired estimate for $B.$\qed
 %The proof concludes by density argument due to \ref{dense} in Lemma \ref{super}.

\subsection*{Step 2: Iterative scheme}

Fix some nonnegative integer $n_0$ and define  
$$u_0^n:=\sum_{|j|\leq n+n_0}\dot\Delta_j u_0\andf \sigma_0^n:=\sum_{|j|\leq n+n_0}\dot\Delta_j \sigma_0.$$
It is clear that those two 
sequences belongs to the space $W^\infty_d$ of smooth functions with 
derivatives of any order in $L^d,$ and that 
\begin{equation*}
\begin{split}
&u^n_0\to u_0\andf \nabla u^n_0\to\nabla u_0 \quad \mbox{in}\ \dot{B}^{2-\alpha}_{d,1},\\
&\sigma^n_0\to\sigma_0\andf \nabla\sigma^n_0\to\nabla\sigma_0\quad\mbox{in}\ \dot{B}^1_{d,1}.
\end{split}
\end{equation*}
In what follows, we set\footnote{By taking $n_0$ large enough, 
  one may make $\varepsilon$ and $\eta$
as close as $\|u_0\|_{\dot B^{2-\alpha}_{d,1}}$ and $\|a_0\|_{\dot B^1_{d,1}}$ 
as we want.} 
\begin{equation}\label{eq:defsmall}
\varepsilon:=\sup_{n\in\N} \|u^n_0\|_{\dot B^{2-\alpha}_{d,1}}
\andf \eta:=\sup_{n\in\N} \|a^n_0\|_{\dot B^{1}_{d,1}}.\end{equation}
We then introduce the following iterative scheme: for $n=0,$ let
\begin{align*}
\sigma^0\equiv \sigma^0_0,\quad u^0\equiv 0,
\end{align*}
while for all $n=1,2,...$, let $u^n$ be the solution to the system
\begin{align}\label{un}
u^n_t + \mu ( - \Delta)^{\alpha /2} u^n  = F(u^{n-1},\sigma^{n-1})
\end{align}
with the initial data $u^n(0)=u^n_0$ and $F$ defined in \eqref{iii}. Finally, let $\sigma^n$ be a solution to 
\begin{align}\label{rhon}
\sigma^n_t + \div ( \sigma^n u^n  ) & = -\div u^n
\end{align}
with the initial data $\sigma^n(0) = \sigma^n_0$.
%\begin{rem}\rm The Schwartz mollification \eqref{apr} serves to simplify the argumentation up to the point of convergence with $n$ to infinity. It is not needed though, since the considered classes of regularity are sufficient for strong existence for equations \eqref{un} and \eqref{rhon}. \end{rem}

\subsection*{Step 3: Existence of approximate solutions and estimates for critical regularity}

Given that the data are  in $W^\infty_d,$ 
the existence of a sequence  $(\sigma^n,u^n)_{n\in\N}$ of global approximate solutions in $\cC^1(\R_+;W^\infty_d)$   follows from an easy induction argument, based on the two propositions below, that are proved  in appendix. 
\begin{prop}\label{rubel}
Let $u:\R_+\times\R^d\to\R^d$ be a given time-dependent vector field  with components in $\cC(\R_+;W^\infty_d).$ 
Then  the transport equation
\begin{align}\label{trans}
\sigma_t   + {\rm div} \left( u \sigma  \right) = f 
\end{align}
with initial data $\sigma_0$ in $W^\infty_d$ and source term $f$ in $\cC(\R_+;W^\infty_d)$ admits a 
unique solution $\sigma$ in $\cC^1(\R_+;W^\infty_d)$.
Moreover, there exists a constant $C>0$, such that for all $T\geq0,$ 
 \begin{align*}
\|\sigma\|_{L^\infty_T\dot{B}^1_{d,1}}\leq 
\|\sigma_0\|_{\dot{B}^1_{d,1}} + \|f\|_{L^1_T\dot{B}^1_{d,1}}+C\int_0^T\|\nabla u(t)\|_{\dot{B}^{1}_{d,1}}
\|\sigma(t)\|_{\dot{B}^{1}_{d,1}}\diff t.
\end{align*}
\end{prop}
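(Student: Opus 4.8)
The plan is to prove Proposition~\ref{rubel} by first establishing existence and uniqueness in the smooth class $\cC^1(\R_+;W^\infty_d)$ by elementary ODE/transport theory, and then separately deriving the stated $\dot B^1_{d,1}$ estimate by a Littlewood--Paley / commutator argument. For the existence part, since $u\in\cC(\R_+;W^\infty_d)$ is in particular globally Lipschitz in space with all derivatives bounded, the flow $X(t,x)$ of $\dot X=u(t,X)$ is globally defined, smooth in $x$, and invertible with smooth inverse; writing $\sigma_t+u\cdot\nabla\sigma=f-\sigma\,\div u$, the solution is given explicitly along characteristics by the Duhamel-type formula $\sigma(t,X(t,x))=\sigma_0(x)+\int_0^t\bigl(f-\sigma\,\div u\bigr)(s,X(s,x))\,ds$, which by a standard Gronwall/fixed-point argument produces a unique solution, and a bootstrap in the number of derivatives (each spatial derivative of $\sigma$ again solves a transport equation of the same type with a source built from lower-order derivatives of $\sigma$, $u$, $f$) keeps $\sigma\in\cC^1(\R_+;W^\infty_d)$. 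Uniqueness in this class is immediate from the energy method or from the characteristic formula.

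For the a priori estimate, I would apply $\dot\Delta_j$ to \eqref{trans}, rewriting it as $\partial_t\dot\Delta_j\sigma + u\cdot\nabla\dot\Delta_j\sigma = \dot\Delta_j f - \dot\Delta_j(\sigma\,\div u) + R_j$, where $R_j=[u\cdot\nabla,\dot\Delta_j]\sigma$ is the commutator from Lemma~\ref{super}\ref{commutator} (with $w=u$, $s=1$). Multiplying by $|\dot\Delta_j\sigma|^{d-2}\dot\Delta_j\sigma$ and integrating, the transport term $\int u\cdot\nabla\dot\Delta_j\sigma\,|\dot\Delta_j\sigma|^{d-2}\dot\Delta_j\sigma\,dx = \frac1d\int u\cdot\nabla|\dot\Delta_j\sigma|^d\,dx = -\frac1d\int\div u\,|\dot\Delta_j\sigma|^d\,dx$ is controlled by $\|\div u\|_\infty\|\dot\Delta_j\sigma\|_d^d \lesssim \|\nabla u\|_{\dot B^1_{d,1}}\|\dot\Delta_j\sigma\|_d^d$ (using $\dot B^1_{d,1}\hookrightarrow L^\infty$ from Lemma~\ref{super}(c)); hence $\frac{d}{dt}\|\dot\Delta_j\sigma\|_d \le \|\dot\Delta_j f\|_d + \|\dot\Delta_j(\sigma\,\div u)\|_d + \|R_j\|_d + C\|\nabla u\|_{\dot B^1_{d,1}}\|\dot\Delta_j\sigma\|_d$. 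Multiplying by $2^j$, summing over $j\in\Z$, and using the product estimate Lemma~\ref{super}\ref{prod} for $\|\sigma\,\div u\|_{\dot B^1_{d,1}}\lesssim\|\sigma\|_{\dot B^1_{d,1}}\|\div u\|_{\dot B^1_{d,1}}\lesssim\|\sigma\|_{\dot B^1_{d,1}}\|\nabla u\|_{\dot B^1_{d,1}}$ together with the commutator bound $\sum_j 2^j\|R_j\|_d = \|R_j\|_{\dot B^1_{d,1}}$-type sum $\lesssim\|\nabla u\|_{\dot B^1_{d,1}}\|\sigma\|_{\dot B^1_{d,1}}$, one gets $\frac{d}{dt}\|\sigma\|_{\dot B^1_{d,1}} \le \|f\|_{\dot B^1_{d,1}} + C\|\nabla u\|_{\dot B^1_{d,1}}\|\sigma\|_{\dot B^1_{d,1}}$; integrating in time on $[0,T]$ yields exactly the claimed inequality.

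A few technical points need care. The term-by-term differentiation in $j$ and the summation require a justification of absolute convergence, which is legitimate because $\sigma\in\cC^1(\R_+;W^\infty_d)$ gives rapidly convergent Littlewood--Paley series; one may also argue first for the nonhomogeneous truncation and pass to the limit. The commutator estimate of Lemma~\ref{super}\ref{commutator} is stated for $-1<s\le2$, so $s=1$ is admissible. A minor subtlety is the low-frequency condition in the definition of $\dot B^1_{d,1}$: since $W^\infty_d\subset L^d$ and the data and source lie in genuine Besov spaces with the decay condition, the solution inherits $\lim_{j\to-\infty}\|\dot S_j\sigma\|_\infty=0$ by the same characteristic representation, so $\sigma(t)\in\dot B^1_{d,1}$ for each $t$. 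The main obstacle, such as it is, is purely bookkeeping: organizing the $\dot\Delta_j$ energy estimate so that the transport term is handled by the $\div u$ identity rather than naively (which would cost a derivative), and collecting the commutator and product terms into the single factor $\|\nabla u\|_{\dot B^1_{d,1}}\|\sigma\|_{\dot B^1_{d,1}}$; there is no genuine analytic difficulty since all the needed inequalities are quoted in Lemma~\ref{super}.
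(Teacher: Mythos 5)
Your argument is correct and follows essentially the same route as the paper: localize with $\dot\Delta_j$, isolate the commutator $R_j=[u\cdot\nabla,\dot\Delta_j]\sigma$, perform the $L^d$ energy estimate on each block with the transport term handled via the $\div u$ integration by parts, then multiply by $2^j$, sum, and invoke the product law (Lemma~\ref{super}\ref{prod}) and the commutator bound (Lemma~\ref{super}\ref{commutator}) before integrating in time. The only difference is that you spell out the existence step via characteristics, which the paper dismisses as standard.
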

Taking  $\sigma=\sigma^n$, $u=u^n$ and $f=-\div u^n,$ and using Lemma \ref{super}\ref{besovineq}
  and Gronwall lemma, we readily get:
\begin{prop}\label{nazwa}
For all $n=1,2,...$, equation \eqref{rhon} admits a unique global solution $\sigma^n\in \cC^1(\R_+;W^\infty_d)$
and there exists a constant $C_1$  such that  for all $T>0,$ 
 $$
\|\sigma^n\|_{L^\infty_T\dot{B}^1_{d,1}}\leq  e^{C_1\|u^n\|_{L^1_T\dot{B}^{2}_{d,1}}}\left(\|\sigma^n_0\|_{\dot{B}^1_{d,1}} + \|u^n\|_{L^1_T\dot{B}^2_{d,1}}\right)\cdotp $$\end{prop}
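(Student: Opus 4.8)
This proposition is a direct corollary of Proposition \ref{rubel}, so I would present it as such rather than redo any analysis. At the $n$-th stage of the induction the vector field $u^n$ has already been produced (as the solution to the linear fractional heat equation \eqref{un} with smooth decaying data and source $F(u^{n-1},\sigma^{n-1})\in\mathcal C(\R_+;W^\infty_d)$), so its components belong to $\mathcal C(\R_+;W^\infty_d)$. Since moreover $\sigma^n_0\in W^\infty_d$ and $-\div u^n\in\mathcal C(\R_+;W^\infty_d)$, Proposition \ref{rubel} applied with $\sigma=\sigma^n$, $u=u^n$ and $f=-\div u^n$ immediately gives a unique global solution $\sigma^n\in\mathcal C^1(\R_+;W^\infty_d)$ to \eqref{rhon} together with, for every $T\ge0$,
\begin{equation*}
\|\sigma^n\|_{L^\infty_T\dot B^1_{d,1}}\le \|\sigma^n_0\|_{\dot B^1_{d,1}}+\|\div u^n\|_{L^1_T\dot B^1_{d,1}}+C\int_0^T\|\nabla u^n(t)\|_{\dot B^1_{d,1}}\,\|\sigma^n(t)\|_{\dot B^1_{d,1}}\diff t.
\end{equation*}

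Next I would reduce the two $u^n$-dependent quantities to a single norm. Both $\div u^n$ and $\nabla u^n$ are first-order derivatives of $u^n$, so Lemma \ref{super}\ref{besovineq} (the gradient maps $\dot B^{2}_{d,1}$ into $\dot B^1_{d,1}$) yields $\|\div u^n(t)\|_{\dot B^1_{d,1}}+\|\nabla u^n(t)\|_{\dot B^1_{d,1}}\le C\|u^n(t)\|_{\dot B^2_{d,1}}$. Since the right-hand side of the bound above is nondecreasing in $T$, it in fact controls $\|\sigma^n\|_{L^\infty_t\dot B^1_{d,1}}$ for every $t\le T$, whence
\begin{equation*}
\|\sigma^n\|_{L^\infty_t\dot B^1_{d,1}}\le \|\sigma^n_0\|_{\dot B^1_{d,1}}+C\|u^n\|_{L^1_t\dot B^2_{d,1}}+C\int_0^t\|u^n(\tau)\|_{\dot B^2_{d,1}}\,\|\sigma^n\|_{L^\infty_\tau\dot B^1_{d,1}}\diff\tau .
\end{equation*}

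Finally I would apply the integral form of Grönwall's lemma to the nondecreasing quantity $t\mapsto\|\sigma^n\|_{L^\infty_t\dot B^1_{d,1}}$, with the (nondecreasing) forcing term $\|\sigma^n_0\|_{\dot B^1_{d,1}}+C\|u^n\|_{L^1_t\dot B^2_{d,1}}$ and the locally integrable weight $\tau\mapsto C\|u^n(\tau)\|_{\dot B^2_{d,1}}$, which gives
\begin{equation*}
\|\sigma^n\|_{L^\infty_T\dot B^1_{d,1}}\le\bigl(\|\sigma^n_0\|_{\dot B^1_{d,1}}+C\|u^n\|_{L^1_T\dot B^2_{d,1}}\bigr)\exp\!\bigl(C\|u^n\|_{L^1_T\dot B^2_{d,1}}\bigr),
\end{equation*}
i.e. the announced inequality with a suitable constant $C_1$. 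There is no genuine obstacle here; the only point requiring a little care is to run Grönwall's lemma on the supremum $\|\sigma^n\|_{L^\infty_t\dot B^1_{d,1}}$ rather than on the instantaneous norm $\|\sigma^n(t)\|_{\dot B^1_{d,1}}$, which is legitimate precisely because Proposition \ref{rubel} already furnishes an $L^\infty_T$ bound that is monotone in $T$.
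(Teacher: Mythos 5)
Your proposal is correct and follows exactly the route the paper takes: apply Proposition \ref{rubel} with $\sigma=\sigma^n$, $u=u^n$, $f=-\div u^n$, control $\|\div u^n\|_{\dot B^1_{d,1}}$ and $\|\nabla u^n\|_{\dot B^1_{d,1}}$ by $\|u^n\|_{\dot B^2_{d,1}}$ via Lemma \ref{super}\ref{besovineq}, and conclude with Gr\"onwall's lemma. Your extra care in running Gr\"onwall on the monotone quantity $t\mapsto\|\sigma^n\|_{L^\infty_t\dot B^1_{d,1}}$ is a welcome precision that the paper leaves implicit, and the harmless multiplicative constant in front of $\|u^n\|_{L^1_T\dot B^2_{d,1}}$ can be absorbed into $C_1$ as the paper itself does when using the estimate.
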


Secondly, we prove existence and present an estimate for $u^n$ depending on $u^{n-1}$ and $\sigma^{n-1}$. Here, as in the density case, we use a classical result, the  proof of which may  be found in the appendix.

\begin{prop} \label{fracdiffestimates}
Let $0<T \leq \infty$, $1 \leq p \leq \infty$ and $s \in \R$. Assume that $u_0 \in \dot B^s_{p,1}$ and $f \in L^1_T \dot B^s_{p,1}$. Then the Cauchy problem
\begin{equation}\label{fracdiff}
\left\{
\begin{array}{lll}
u_t + \mu (-\Delta)^{\alpha /2 } u = f(t,x) , & (t,x) \in \R^+ \times \R^d \\
u(0,x) = u_0 (x) ,  & x \in \R^d
\end{array}
\right.
\end{equation}
 has a unique solution $u \in \cC([0,T); \dot B^s_{p,1}) \cap L^1_T \dot B^{s+ \alpha}_{p,1}$ and there exists $C>0$ such that
\begin{equation*}
\| u \|_{ L^1_T \dot{B}^{s+\alpha}_{p,1}} +  \| u \|_{ L^\infty_T \dot B^s_{p,1}} \leq C \left[  \| u_0 \|_{\dot B^s_{p,1}} + \| f \|_{L^1_T \dot B^s_{p,1}} \right] \cdotp
\end{equation*}
\end{prop}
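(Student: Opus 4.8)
The plan is to localize in frequency and use Duhamel's formula, thereby reducing the whole statement to a single decay estimate for the fractional heat semigroup acting on spectrally localized functions. Applying $\dot\Delta_j$ to \eqref{fracdiff} and writing $u_j:=\dot\Delta_j u$, $f_j:=\dot\Delta_j f$, $u_{0,j}:=\dot\Delta_j u_0$, one has $\partial_t u_j+\mu(-\Delta)^{\alpha/2}u_j=f_j$ with $u_j(0)=u_{0,j}$, hence
$$u_j(t)=e^{-\mu t(-\Delta)^{\alpha/2}}u_{0,j}+\int_0^t e^{-\mu(t-\tau)(-\Delta)^{\alpha/2}}f_j(\tau)\diff\tau.$$

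First I would establish the following: there exist $c,C>0$, independent of $j\in\Z$, so that $\|e^{-\mu t(-\Delta)^{\alpha/2}}g\|_p\le C e^{-c\mu 2^{j\alpha}t}\|g\|_p$ for every $t\ge0$ and every tempered distribution $g$ with Fourier transform supported in the support of $\varphi(2^{-j}\cdot)$. Writing $e^{-\mu t(-\Delta)^{\alpha/2}}g=\mathcal K_{j,t}*g$ with $\widehat{\mathcal K_{j,t}}(\xi)=\psi(2^{-j}\xi)e^{-\mu t|\xi|^\alpha}$, where $\psi$ is smooth, supported in an annulus and equal to $1$ near $\supp\varphi$, Young's inequality reduces the claim to $\|\mathcal K_{j,t}\|_1\le Ce^{-c\mu 2^{j\alpha}t}$; the dilation $\xi\mapsto 2^j\xi$ reduces this in turn to the case $j=0$ with $t$ replaced by $2^{j\alpha}t$. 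Since $|\xi|^\alpha\ge\kappa>0$ on $\supp\psi$, one splits $e^{-\mu s|\xi|^\alpha}=e^{-\mu s\kappa/2}\,e^{-\mu s(|\xi|^\alpha-\kappa/2)}$, notes that $s^N e^{-\mu s(|\xi|^\alpha-\kappa/2)}$ stays bounded on $\supp\psi\times\R_+$ for every $N$, and concludes by the very integration-by-parts argument used in Step~1 that $|\mathcal K_{0,s}(x)|\le C\,e^{-\mu s\kappa/2}(1+|x|^2)^{-d}$ uniformly in $s$, whence $\|\mathcal K_{0,s}\|_1\le Ce^{-\mu s\kappa/2}$. This gives the claim with $c=\kappa/2$.

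Granting this, taking $L^p$ norms in Duhamel's formula yields
$$\|u_j(t)\|_p\le C e^{-c\mu 2^{j\alpha}t}\|u_{0,j}\|_p+C\int_0^t e^{-c\mu 2^{j\alpha}(t-\tau)}\|f_j(\tau)\|_p\diff\tau.$$
Bounding the exponentials by $1$ gives $\|u_j\|_{L^\infty_T L^p}\le C(\|u_{0,j}\|_p+\|f_j\|_{L^1_T L^p})$; integrating in time over $[0,T]$ and exchanging the order of integration in the convolution term, each time integral contributes a factor $(c\mu 2^{j\alpha})^{-1}$, so that $\|u_j\|_{L^1_T L^p}\le C 2^{-j\alpha}(\|u_{0,j}\|_p+\|f_j\|_{L^1_T L^p})$ — this is precisely where the gain of $\alpha$ derivatives originates, and the exponential decay keeps the bound uniform even when $T=\infty$. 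Multiplying the first inequality by $2^{js}$ and the second by $2^{j(s+\alpha)}$ and summing over $j\in\Z$ yields exactly the announced estimate. Continuity in time holds because each $u_j\in\cC([0,T);L^p)$ and, thanks to the $\ell^1$ summation, the series $\sum_j u_j$ converges in $\cC([0,T);\dot B^s_{p,1})$.

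Uniqueness follows from linearity: the difference $w$ of two solutions with the stated regularity solves \eqref{fracdiff} with $u_0=0$ and $f=0$, and the a priori estimate above — valid for any solution, since it only used the localized equation — forces $w\equiv0$. For existence one sets $u:=\sum_{j\in\Z}u_j$ with $u_j$ given by the Duhamel formula; the estimates already proved show the series converges in the space of the statement, and one checks its sum solves \eqref{fracdiff} (equivalently, one regularizes $u_0$ and $f$, solves by a standard ODE/Fourier argument, and passes to the limit using the uniform bounds). The only substantial ingredient is the frequency-localized decay estimate, and in particular the uniformity of $c$ and $C$ in $j$; everything else is routine bookkeeping with Duhamel's formula and $\ell^1$-summation over dyadic blocks.
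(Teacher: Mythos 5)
Your proof is correct and follows essentially the same route as the paper: localize with $\dot\Delta_j$, apply Duhamel's formula, use the frequency-localized decay $\|e^{-\mu t(-\Delta)^{\alpha/2}}g\|_p\leq Ce^{-c\mu 2^{j\alpha}t}\|g\|_p$, and sum the resulting bounds over $j$ (bounding the exponential by $1$ for the $L^\infty_T$ norm and integrating it in time to gain the factor $2^{-j\alpha}$ for the $L^1_T$ norm). The only difference is that the paper simply cites this Bernstein-type semigroup estimate from the literature, whereas you prove it via the rescaled kernel $\mathcal K_{0,s}$ and the $(\id-\Delta_\eta)^d$ integration-by-parts argument — a correct and self-contained addition.
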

The above proposition  enables us to  get the following result for  $u^n$. 
\begin{prop}
For all $n=1,2,...$, given $u^{n-1}$ and $\sigma^{n-1}$ in  $\cC(\R_+;W^\infty_d)$, there exists a unique solution 
$u^n\in \cC^1(\R_+; W^\infty_d)$ to \eqref{un} such that for all $T\geq0,$ 
\begin{equation}\label{ul1}
\|u^n\|_{L^1_T \dot B^{2}_{d,1}} + \| u^n \|_{L^\infty_T \dot  B^{2-\alpha}_{d,1} } 
\leq  \|u^n_0\|_{\dot{B}^{2-\alpha}_{d,1}}+C_2\int_0^T   \| u^{n-1} \|_{\dot B^2_{d,1}}\bigl( \| \sigma^{n-1} \|_{\dot B^1_{d,1}}+ \| u^{n-1} \|_{\dot B^{2-\alpha}_{d,1}}\bigr)\,dt.
\end{equation}
\end{prop}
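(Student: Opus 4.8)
The approach is to regard \eqref{un} as the linear problem \eqref{fracdiff} with source $f:=F(u^{n-1},\sigma^{n-1})$ and data $u^n_0$, so that all the work is carried out by Proposition \ref{fracdiffestimates} together with the estimates already at our disposal. The first task is to check that the induction hypothesis $u^{n-1},\sigma^{n-1}\in\cC(\R_+;W^\infty_d)$ forces $F(u^{n-1},\sigma^{n-1})\in\cC(\R_+;W^\infty_d)$: the two bilinear pieces $(u^{n-1}\cdot\nabla)u^{n-1}$ and $\sigma^{n-1}\,(-\Delta)^{\alpha/2}u^{n-1}$ are products of smooth functions all of whose derivatives lie in $L^d$ (for the second one, one also exploits the smoothing of $(-\Delta)^{\alpha/2}$ composed with derivatives), while the nonlocal term is treated by differentiating under the integral, $\nabla I_\alpha(u,\sigma)=I_\alpha(\nabla u,\sigma)+I_\alpha(u,\nabla\sigma)$, each derivative being controlled by Lemma \ref{cruc} and its higher-regularity variants. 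Granted this, the linear existence theory of the appendix (Proposition \ref{fracdiffestimates} used at every regularity level, together with its uniqueness statement) produces a unique $u^n\in\cC^1(\R_+;W^\infty_d)$ solving \eqref{un} with $u^n(0)=u^n_0$.

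To obtain \eqref{ul1} I would apply Proposition \ref{fracdiffestimates} with $p=d$ and $s=2-\alpha$ (so that $s+\alpha=2$), which bounds the left-hand side of \eqref{ul1} by $\|u^n_0\|_{\dot B^{2-\alpha}_{d,1}}+\|F(u^{n-1},\sigma^{n-1})\|_{L^1_T\dot B^{2-\alpha}_{d,1}}$, up to a constant that may be absorbed into $C_2$. Everything then comes down to the pointwise-in-time bound
\[
\|F(u^{n-1},\sigma^{n-1})(t)\|_{\dot B^{2-\alpha}_{d,1}}\le C\,\|u^{n-1}(t)\|_{\dot B^2_{d,1}}\bigl(\|\sigma^{n-1}(t)\|_{\dot B^1_{d,1}}+\|u^{n-1}(t)\|_{\dot B^{2-\alpha}_{d,1}}\bigr),
\]
after which one integrates over $[0,T]$.

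To prove this bound I would treat the three pieces of $F=I_\alpha(u,\sigma)-\mu\sigma\,(-\Delta)^{\alpha/2}u-(u\cdot\nabla)u$ separately (dropping the superscripts $n-1$, and using density of $\cS$ to apply Lemma \ref{cruc} to $W^\infty_d$ functions). For the nonlocal term, Lemma \ref{cruc} with $\theta=0$ gives $\|I_\alpha(u,\sigma)\|_{\dot B^{2-\alpha}_{d,1}}\le K\|\nabla u\|_{\dot B^1_{d,1}}\|\sigma\|_{\dot B^1_{d,1}}$, and $\|\nabla u\|_{\dot B^1_{d,1}}\le C\|u\|_{\dot B^2_{d,1}}$ by Lemma \ref{super}\ref{besovineq}. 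For $\sigma\,(-\Delta)^{\alpha/2}u$, the product law \eqref{prodalpha} with $\theta=0$ yields $\|\sigma\,(-\Delta)^{\alpha/2}u\|_{\dot B^{2-\alpha}_{d,1}}\le C\|\sigma\|_{\dot B^1_{d,1}}\|(-\Delta)^{\alpha/2}u\|_{\dot B^{2-\alpha}_{d,1}}$, and since $2-\alpha\le 1$, item (f) of Lemma \ref{super} bounds the last factor by $C\|u\|_{\dot B^2_{d,1}}$. For the convective term I would use \eqref{prodalpha} at the endpoint $\theta=\alpha-1$, which turns $\|\nabla u\|_{\dot B^{2+\theta-\alpha}_{d,1}}$ into $\|\nabla u\|_{\dot B^1_{d,1}}$ and hence gives $\|(u\cdot\nabla)u\|_{\dot B^{2-\alpha}_{d,1}}\le C\|u\|_{\dot B^{2-\alpha}_{d,1}}\|\nabla u\|_{\dot B^1_{d,1}}\le C\|u\|_{\dot B^{2-\alpha}_{d,1}}\|u\|_{\dot B^2_{d,1}}$, again by Lemma \ref{super}\ref{besovineq}. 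Summing the three contributions gives the displayed pointwise estimate, and integrating it in time and inserting it into the linear estimate produces \eqref{ul1}.

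I expect the computation above to be essentially bookkeeping once Lemma \ref{cruc} and the mapping properties of Lemma \ref{super} are on the table; the only real choice is the value of $\theta$ in \eqref{prodalpha}, namely $\theta=\alpha-1$ for the quadratic velocity term and $\theta=0$ elsewhere, tuned so that only the norms $\|u^{n-1}\|_{\dot B^2_{d,1}}$, $\|\sigma^{n-1}\|_{\dot B^1_{d,1}}$ and $\|u^{n-1}\|_{\dot B^{2-\alpha}_{d,1}}$ appear on the right. The step most deserving of care is not this estimate but the verification that $F(u^{n-1},\sigma^{n-1})$ stays in the smooth class $\cC(\R_+;W^\infty_d)$ needed to invoke the appendix existence results, the nonlocal operator $I_\alpha$ being the term for which this is least routine.
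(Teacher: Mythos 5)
Your proposal is correct and follows essentially the same route as the paper: apply Proposition \ref{fracdiffestimates} with $s=2-\alpha$, $p=d$, and bound the three pieces of $F(u^{n-1},\sigma^{n-1})$ exactly as you do — Lemma \ref{cruc} with $\theta=0$ for $I_\alpha$, the product law \eqref{prodalpha} with $\theta=0$ plus the mapping property of $(-\Delta)^{\alpha/2}$ for the second term, and \eqref{prodalpha} with $\theta=\alpha-1$ for the convective term. The only cosmetic difference is that you spell out the persistence of the $W^\infty_d$ regularity of the source, which the paper leaves to the "easy induction argument" announced before the propositions.
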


\begin{proof}
The proof is an application of Proposition \ref{fracdiffestimates} with $s = 2- \alpha$, $p=d$, and 
\begin{equation*}
f = F(u^{n-1},\sigma^{n-1}).
\end{equation*}
We estimate the norm $\|f \|_{L^1_T \dot B^{2 - \alpha}_{d,1}}$ dealing with each term separately. Bounding  the first term follows directly from Lemma \ref{cruc} and H\"older's inequality:
$$
\| I_\alpha(u^{n-1},\sigma^{n-1})\|_{L^1_T \dot B^{2 - \alpha}_{d,1}}	\leq   C \int_0^T  \| \nabla u^{n-1} \|_{\dot B^1_{d,1}}  \|  \sigma^{n-1} \|_{\dot B^1_{d,1}}\diff t.$$
Lemma \ref{super} and H\"older's inequality allow us to estimate norms of the second and the third terms of $f$ 
as follows:
\begin{equation*}
\begin{split}
\|\sigma^{n-1} ( - \Delta)^{\alpha /2} u^{n-1} \|_{L^1_T \dot B^{2 - \alpha}_{d,1}} 	&  \leq C  \int_0^T \| \sigma^{n-1} \|_{\dot B^1_{d,1}}  \|( - \Delta)^{\alpha /2} u^{n-1} \|_{\dot B^{2 - \alpha}_{d,1}} \diff t \\
 & \leq C \int_0^T \| \sigma^{n-1} \|_{\dot B^1_{d,1} } \| u^{n-1} \|_{\dot B^{2}_{d,1}} \diff t \end{split}
\end{equation*}
and
\begin{equation*}
\begin{split}
\| (u^{n-1} \cdot\nabla) u^{n-1} \|_{L^1_T \dot B^{2 - \alpha}_{d,1}}	& \leq C  \int_0^T \| u^{n-1} \|_{\dot B^{2-\alpha}_{d,1}}  \| \nabla u^{n-1} \|_{\dot B^1_{d,1}} \diff t \\
& \leq  C \int_0^T \| u^{n-1} \|_{\dot B^{2-\alpha}_{d,1}}  \| u^{n-1} \|_{\dot B^2_{d,1}} \diff t.
 	\end{split}
\end{equation*}
Thus we have the inequality
\begin{equation*}
\|f \|_{L^1_T \dot B^{2 - \alpha}_{d,1}} \leq  C\int_0^T   \| u^{n-1} \|_{\dot B^2_{d,1}}\bigl( \| \sigma^{n-1} \|_{\dot B^1_{d,1}}+ \| u^{n-1} \|_{\dot B^{2-\alpha}_{d,1}}\bigr)\diff t,\end{equation*}
from which we get the stated result.
\end{proof}
It is now easy to check  that if $\varepsilon$ and $\eta$ in \eqref{eq:defsmall} have been 
taken sufficiently small then we have for all $n\in\N$ and $T\geq0,$
\begin{equation}\label{eq:smallcritic}
\|u^n\|_{L^1_T B^{2}_{d,1}} + \| u^n\|_{L^\infty_T \dot  B^{2-\alpha}_{d,1} }\leq 2\varepsilon
\andf \|\sigma^n\|_{L^\infty_TB^1_{d,1}}\leq 2\eta.\end{equation} 
Indeed, the result is obviously true for $n=0,$ and if it is true for $n-1$ 
then Inequality \eqref{ul1} implies that 
$$
\|u^n\|_{L^1_T B^{2}_{d,1}} + \| u^n\|_{L^\infty_T \dot  B^{2-\alpha}_{d,1} }\leq \varepsilon
+4C_2\varepsilon(\varepsilon+\eta).
$$
Hence \eqref{eq:smallcritic} is fulfilled by $u^n$ if $\varepsilon$ and $\eta$ are so small that
\begin{equation*}
4C_2(\varepsilon+\eta)\leq1.
\end{equation*}
Then, taking advantage of Proposition \ref{nazwa}, we get
$$
 \|\sigma^n\|_{L^\infty_TB^1_{d,1}}\leq  e^{2C_1\varepsilon} (\eta+2C_1\varepsilon)\leq 2\eta
 $$
 provided that in addition we have for instance
 \begin{equation}\label{eq:eps2} 2C_1\varepsilon \leq \log(3/2)\andf 6C_1\varepsilon\leq \eta.\end{equation}

%%%%%%%%%%%%%%%%%%%%

\subsection*{Step 4: Higher-order estimates}

In order to obtain higher-order estimates we simply differentiate equations \eqref{un} and \eqref{rhon} with respect to the space variable. 
Denoting  by $\partial_k$ the derivative with respect to the variable $x_k,$  we get for $k=1,\cdots,d,$ 
$$
(\partial_k\sigma^n)_{t} + {\rm div}(u^n\partial_k \sigma^n) = -\div(\partial_k u^n\,\sigma^n) - \div\partial_k u^n\andf
$$
 \begin{multline} \label{itermomentumdiff}
(\partial_k u^n)_{t} +  \mu (- \Delta)^{\alpha/2}\partial_k  u^n   =  -\mu \partial_k \sigma^{n-1} ( - \Delta)^{\alpha /2} u^{n-1} -\mu \sigma^{n-1} (- \Delta)^{\alpha/2} 
\partial_k u^{n-1}\\ + I_\alpha (\partial_k u^{n-1}, \sigma^{n-1}) + I_\alpha (u^{n-1},\partial_k \sigma^{n-1}) - (\partial_k u^{n-1} \cdot \nabla) u^{n-1}  - ( u^{n-1} \cdot \nabla) \partial_k u^{n-1}\, .
\end{multline}

\begin{prop}\label{nazwax}
Approximate solutions $\sigma^n$ to \eqref{rhon} satisfy
$$\|\nabla \sigma^n\|_{L_T^\infty\dot{B}^1_{d,1}}\leq e^{C_3\|\nabla u^n\|_{L^1_T\dot{B}^1_{d,1}}}\Big[\|\nabla\sigma^n_0\|_{\dot{B}^1_{d,1}} + C_3\|\nabla u^n\|_{L^1_T\dot{B}^2_{d,1}} \bigl(\|\sigma^n\|_{L^\infty_T\dot{B}^1_{d,1}}+1\bigr)\Big]\cdotp
$$
\end{prop}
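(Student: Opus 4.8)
The plan is to view the equation satisfied by each component $\partial_k\sigma^n$ ($k=1,\dots,d$),
\[
(\partial_k\sigma^n)_t + \div(u^n\,\partial_k\sigma^n) = f_k,\qquad f_k:=-\div(\partial_k u^n\,\sigma^n)-\div\partial_k u^n,
\]
as a transport equation of the form \eqref{trans} with advecting field $u^n$ and source term $f_k$, and to apply Proposition \ref{rubel}. Since $\sigma^n$ and $u^n$ belong to $\cC^1(\R_+;W^\infty_d)$, the functions $\partial_k\sigma^n$ and $f_k$ are again in $\cC^1(\R_+;W^\infty_d)$ and $\cC(\R_+;W^\infty_d)$, respectively, so Proposition \ref{rubel}, applied on $[0,t]$ for an arbitrary $t\leq T$, gives
\[
\|\partial_k\sigma^n\|_{L^\infty_t\dot B^1_{d,1}}\leq \|\partial_k\sigma^n_0\|_{\dot B^1_{d,1}} + \|f_k\|_{L^1_t\dot B^1_{d,1}} + C\int_0^t\|\nabla u^n(\tau)\|_{\dot B^1_{d,1}}\,\|\partial_k\sigma^n(\tau)\|_{\dot B^1_{d,1}}\diff\tau.
\]

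Next I would bound $f_k$ in $L^1_t\dot B^1_{d,1}$. On the one hand, Lemma \ref{super}\ref{besovineq} gives $\|\div\partial_k u^n\|_{\dot B^1_{d,1}}\leq C\|\nabla u^n\|_{\dot B^2_{d,1}}$. On the other hand, expanding the remaining divergence as
\[
\div(\partial_k u^n\,\sigma^n)=\sigma^n\,\partial_k(\div u^n) + (\partial_k u^n)\cdot\nabla\sigma^n,
\]
I would estimate the two summands by means of the product law $\dot B^1_{d,1}\times\dot B^1_{d,1}\to\dot B^1_{d,1}$ of Lemma \ref{super}\ref{prod} (combined with Lemma \ref{super}\ref{besovineq}): the first summand by $C\|\sigma^n\|_{\dot B^1_{d,1}}\|\nabla u^n\|_{\dot B^2_{d,1}}$, the second by $C\|\nabla u^n\|_{\dot B^1_{d,1}}\|\nabla\sigma^n\|_{\dot B^1_{d,1}}$. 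Summing these contributions over $k=1,\dots,d$, using that $\sum_k\|\partial_k v\|_{\dot B^1_{d,1}}$ is comparable to $\|\nabla v\|_{\dot B^1_{d,1}}$, and --- crucially --- moving the term coming from $(\partial_k u^n)\cdot\nabla\sigma^n$ over so that it joins the commutator term of the transport estimate rather than being treated as a genuine forcing term, I would arrive at an inequality of the shape
\[
\|\nabla\sigma^n\|_{L^\infty_t\dot B^1_{d,1}}\leq \|\nabla\sigma^n_0\|_{\dot B^1_{d,1}} + C_3\|\nabla u^n\|_{L^1_t\dot B^2_{d,1}}\bigl(\|\sigma^n\|_{L^\infty_t\dot B^1_{d,1}}+1\bigr) + C_3\int_0^t\|\nabla u^n(\tau)\|_{\dot B^1_{d,1}}\,\|\nabla\sigma^n\|_{L^\infty_\tau\dot B^1_{d,1}}\diff\tau.
\]

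A final application of Gronwall's lemma in the variable $t$ then yields exactly the claimed estimate, the factor $e^{C_3\|\nabla u^n\|_{L^1_T\dot B^1_{d,1}}}$ being the Gronwall multiplier attached to the last integral. The computation is routine; the only point that calls for a little care is the one just highlighted, namely that among the two terms produced by differentiating the conservative flux $\div(\sigma^n u^n)$ one has precisely the structure of the transport commutator, so that it has to be absorbed by Gronwall --- otherwise one would seem to lose a space derivative on $\sigma^n$. Let me also note that the argument requires no smallness of the data: the top-order norm of the density is simply controlled by its critical norm and by the ($L^1$-in-time) high-order norms of the velocity, which is all that will be needed in the subsequent steps.
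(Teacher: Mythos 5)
Your proposal is correct and follows essentially the same route as the paper: apply Proposition \ref{rubel} to $\partial_k\sigma^n$ with forcing $-\div(\partial_k u^n\,\sigma^n)-\div\partial_k u^n$, bound the forcing via the product law of Lemma \ref{super}, keep the term $\|\nabla u^n\|_{\dot B^1_{d,1}}\|\nabla\sigma^n\|_{\dot B^1_{d,1}}$ inside the time integral alongside the commutator contribution, and close with Gronwall. No gaps.
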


\begin{proof}
We apply Proposition \ref{rubel} with $\sigma=\partial_k \sigma^n$, $u=u^n$ and $f=-\div(\partial_ku^n\,\sigma^n) - \div\partial_k u^n$. We need to estimate $f$. Clearly, we have
\begin{align*}
\|f\|_{L^1_T\dot{B}^1_{d,1}}\leq  \|\sigma^n\nabla\div u^n\|_{L^1_T\dot{B}^1_{d,1}} + \|\nabla u^n\otimes \nabla\sigma^n\|_{L^1_T\dot{B}^1_{d,1}} 
+ \|\nabla \div u^n\|_{L^1_T\dot{B}^1_{d,1}}
\end{align*}
and with the use of H\" older's inequality and Lemma \ref{super}, we get
$$
\|f\|_{L^1_T\dot{B}^1_{d,1}}\leq C_3
\biggl(\|\nabla u^n\|_{L^1_T\dot{B}^2_{d,1}}\left(\|\sigma^n\|_{L^\infty_T\dot{B}^1_{d,1}}+1\right) 
+ \int_0^T \|\nabla u^n\|_{\dot{B}^1_{d,1}}\|\nabla\sigma^n\|_{\dot{B}^1_{d,1}}\,dt\biggr)\cdotp
$$
By Proposition \ref{rubel} and Gronwall lemma, this  leads to the desired inequality.
 %\begin{align*}\|\sigma^n_x\|_{\dot{B}^1_{d,1}}\leq Ce^{C\|u^n\|_{L^1_T\dot{B}^2_{d,1}}}\Big[\|(\sigma^n_0)_x\|_{\dot{B}^1_{d,1}} + \|u^n\|_{L^1_T\dot{B}^3_{d,1}} \left(\|\sigma^n\|_{L^\infty_T\dot{B}^1_{d,1}}+1\right)\\+ \|u^n\|_{L^1_T\dot{B}^2_{d,1}} \|\sigma^n\|_{L^\infty_T\dot{B}^2_{d,1}}\Big].
%\end{align*}
%Then we sum over $x=x_1,...,x_d$ obtaining \eqref{signx}.
\end{proof}

Likewise, applying Proposition \ref{fracdiffestimates} to equation \eqref{itermomentumdiff} provides us with  higher-order bounds for~$u^n$:

\begin{prop} \label{itermomentumdiffestimate}
Approximate solutions $u^n$ to \eqref{un} satisfy for all $T\geq0,$
$$\displaylines{
\|\nabla u^n\|_{L^1_T \dot B^{2}_{d,1}\cap L^\infty_T \dot  B^{2-\alpha}_{d,1} } \leq C_4  \Big[ \|\nabla u^n_0\|_{\dot{B}^{2-\alpha}_{d,1}} + \|\nabla u^{n-1}\|_{L^1_T \dot B^2_{d,1}}\| \sigma^{n-1}\|_{L^\infty_T \dot B^1_{d,1}}\hfill\cr\hfill 
+ \| u^{n-1}\|_{L^1_T \dot B^2_{d,1}} \|\nabla \sigma^{n-1} \|_{L^\infty_T \dot B^1_{d,1}}
+   \|\nabla  u^{n-1}\|_{L^1_T \dot B^2_{d,1}}\| u^{n-1}\|_{L^\infty_T \dot B^{2 -\alpha}_{d,1}} +  \| u^{n-1}\|_{L^1_T \dot B^2_{d,1}}\| \nabla u^{n-1} \|_{L^\infty_T \dot B^{2 -\alpha}_{d,1}}   \Big]\cdotp}$$
\end{prop}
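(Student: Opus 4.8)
The plan is to apply Proposition~\ref{fracdiffestimates} to the differentiated momentum equation~\eqref{itermomentumdiff}, which for each $k=1,\dots,d$ has the form $(\partial_k u^n)_t+\mu(-\Delta)^{\alpha/2}\partial_k u^n=g_k$, where $g_k$ denotes the right-hand side of~\eqref{itermomentumdiff}. Taking $s=2-\alpha$ and $p=d$ in Proposition~\ref{fracdiffestimates} gives
\[
\|\partial_k u^n\|_{L^1_T\dot B^{2}_{d,1}}+\|\partial_k u^n\|_{L^\infty_T\dot B^{2-\alpha}_{d,1}}\leq C\Bigl(\|\partial_k u^n_0\|_{\dot B^{2-\alpha}_{d,1}}+\|g_k\|_{L^1_T\dot B^{2-\alpha}_{d,1}}\Bigr),
\]
so that, after summing over $k$ (noting that $\|\nabla v\|_{\dot B^{s}_{d,1}}$ and $\sum_k\|\partial_k v\|_{\dot B^{s}_{d,1}}$ are comparable), it only remains to bound $\|g_k\|_{L^1_T\dot B^{2-\alpha}_{d,1}}$ by the bracketed sum appearing in the statement.

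I would then estimate the six summands of $g_k$ one at a time, each time putting the $L^1_T$ norm on the factor carrying $\dot B^{2}_{d,1}$ regularity and the $L^\infty_T$ norm on the other, via H\"older's inequality in time. For $\partial_k\sigma^{n-1}\,(-\Delta)^{\alpha/2}u^{n-1}$ and $\sigma^{n-1}\,(-\Delta)^{\alpha/2}\partial_k u^{n-1}$, the product law~\eqref{prodalpha} with $\theta=0$ together with the mapping property $(-\Delta)^{\alpha/2}\colon\dot B^{2}_{d,1}\to\dot B^{2-\alpha}_{d,1}$ of Lemma~\ref{super} yield, pointwise in time, $C\|\nabla\sigma^{n-1}\|_{\dot B^{1}_{d,1}}\|u^{n-1}\|_{\dot B^{2}_{d,1}}$ and $C\|\sigma^{n-1}\|_{\dot B^{1}_{d,1}}\|\nabla u^{n-1}\|_{\dot B^{2}_{d,1}}$, hence after integration the third and second terms of the bracket. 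For the nonlocal terms $I_\alpha(\partial_k u^{n-1},\sigma^{n-1})$ and $I_\alpha(u^{n-1},\partial_k\sigma^{n-1})$, Lemma~\ref{cruc} with $\theta=0$ gives $K\|\nabla\partial_k u^{n-1}\|_{\dot B^{1}_{d,1}}\|\sigma^{n-1}\|_{\dot B^{1}_{d,1}}$ and $K\|\nabla u^{n-1}\|_{\dot B^{1}_{d,1}}\|\nabla\sigma^{n-1}\|_{\dot B^{1}_{d,1}}$; using $\|\nabla\partial_k u^{n-1}\|_{\dot B^{1}_{d,1}}\leq C\|\nabla u^{n-1}\|_{\dot B^{2}_{d,1}}$ and $\|\nabla u^{n-1}\|_{\dot B^{1}_{d,1}}\leq C\|u^{n-1}\|_{\dot B^{2}_{d,1}}$ (Lemma~\ref{super}\ref{besovineq}) again produces the second and third terms. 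Finally, for the convective terms I would use~\eqref{prodalpha} once more: for $(\partial_k u^{n-1}\cdot\nabla)u^{n-1}$, taking $\theta=0$ and Lemma~\ref{super}\ref{besovineq} gives $C\|u^{n-1}\|_{\dot B^{2}_{d,1}}\|\nabla u^{n-1}\|_{\dot B^{2-\alpha}_{d,1}}$, i.e. the fifth term, while for $(u^{n-1}\cdot\nabla)\partial_k u^{n-1}$ the choice $\theta=\alpha-1$ gives $C\|u^{n-1}\|_{\dot B^{2-\alpha}_{d,1}}\|\nabla\partial_k u^{n-1}\|_{\dot B^{1}_{d,1}}\leq C\|u^{n-1}\|_{\dot B^{2-\alpha}_{d,1}}\|\nabla u^{n-1}\|_{\dot B^{2}_{d,1}}$, i.e. the fourth term. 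Summing over $k$ and absorbing all constants into a single $C_4$ then yields the proposition.

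The individual bounds are elementary, so the only real point requiring care is the accounting of time integrability. Because Proposition~\ref{nazwax} propagates $\sigma^n$ and $\nabla\sigma^n$ only in $L^\infty_T\dot B^{1}_{d,1}$, with no $L^1$-in-time control whatsoever, the $L^1_T$ norm is forced onto a velocity factor in every product, and in fact onto a $\dot B^{2}_{d,1}$ norm of $u^{n-1}$ or $\nabla u^{n-1}$; this rigidly dictates how each bilinear term must be split. In particular, for the last convective term $(u^{n-1}\cdot\nabla)\partial_k u^{n-1}$ it forces the choice $\theta=\alpha-1$ in~\eqref{prodalpha}, so that only a second-order derivative of $u^{n-1}$ — which is controlled since $\nabla u^{n-1}\in L^1_T\dot B^{2}_{d,1}$ — appears, rather than a quantity that is not propagated by the scheme.
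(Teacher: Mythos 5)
Your proposal is correct and follows essentially the same route as the paper: apply Proposition~\ref{fracdiffestimates} with $s=2-\alpha$, $p=d$ to the differentiated equation \eqref{itermomentumdiff}, then bound the six source terms one by one using Lemma~\ref{cruc} (with $\theta=0$), the product law \eqref{prodalpha}, and the mapping property of $(-\Delta)^{\alpha/2}$, always placing the $L^1_T$ norm on a $\dot B^2_{d,1}$ velocity factor. The only cosmetic difference is your labeling of $\theta$ in \eqref{prodalpha} for the convective terms, which amounts to the same splitting as in the paper since the product is commutative.
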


\begin{proof}
We  now apply Proposition \ref{fracdiffestimates} to $\partial_k u$ in equation \eqref{itermomentumdiff}, with $f$ being the right-hand side,  $s = 2- \alpha$
and $p=d$. Below, we bound  the $L^1_T \dot B^{2-\alpha}_{d,1}$ norm of each term of $f$ separately, using repeatedly 
Lemmas \ref{super} and \ref{cruc} and H\"older's inequality:
\begin{equation*}
\begin{split}
 \|\sigma^{n-1} (-\Delta)^{\alpha/2} \partial_ku^{n-1}\|_{L^1_T \dot B^{2-\alpha}_{d,1}} & \leq C \int_0^T \| \sigma^{n-1} \|_{\dot B^1_{d,1}} \| ( - \Delta)^{\alpha /2}  \partial_k u^{n-1}\|_{\dot B^{2-\alpha}_{d,1}}  \diff t  \\
 & \leq C \int_0^T \| \sigma^{n-1} \|_{\dot B^1_{d,1}} \|   \partial_ku \|_{\dot B^{2}_{d,1}}  \diff t \\
 & \leq C \| \partial_k u^{n-1}\|_{L^1_T \dot B^2_{d,1}}\| \sigma^{n-1} \|_{L^\infty_T \dot B^1_{d,1}};
\end{split}
\end{equation*}
\begin{equation*}
\begin{split}
 \|  \partial_k\sigma^{n-1} (-\Delta)^{\alpha/2} u^{n-1} \|_{L^1_T \dot B^{2-\alpha}_{d,1}} & \leq C \int_0^T \|  \partial_k\sigma^{n-1} \|_{\dot B^1_{d,1}} \| ( - \Delta)^{\alpha /2} u^{n-1} \|_{\dot B^{2-\alpha}_{d,1}}  \diff t \\ 
 & \leq C \| u^{n-1} \|_{L^1_T \dot B^2_{d,1}}\|  \partial_k\sigma^{n-1} \|_{L^\infty_T \dot B^1_{d,1} };
\end{split}
\end{equation*}
\begin{equation*}
\begin{split}
  \|I_\alpha[  \partial_ku^{n-1}, \sigma^{n-1}] \|_{L^1_T \dot B^{2-\alpha}_{d,1}} & \leq C \int_0^T \|  \partial_ku^{n-1}\|_{\dot B^2_{d,1}} \|  \sigma^{n-1} \|_{\dot B^{1}_{d,1}}  \diff t \\  & \leq C   \|  \partial_ku^{n-1} \|_{L^1_T \dot B^2_{d,1}}\| \sigma^{n-1} \|_{L^\infty_T \dot B^1_{d,1}};
\end{split}
\end{equation*}
\begin{equation*}
\begin{split}
  \|I_\alpha[ u^{n-1},  \partial_k\sigma^{n-1}] \|_{L^1_T \dot B^{2-\alpha}_{d,1}} 
  & \leq C \int_0^T \| u^{n-1} \|_{\dot B^2_{d,1}} \|  \partial_k\sigma^{n-1}\|_{\dot B^{1}_{d,1}}  \diff t  \\
  & \leq C \| u^{n-1}\|_{L^1_T \dot B^2_{d,1}} \| \partial_k\sigma^{n-1} \|_{L^\infty_T \dot B^1_{d,1}};
\end{split}
\end{equation*}
\begin{equation*}
\begin{split}
  \| u^{n-1} \cdot   \partial_k\nabla u^{n-1} \|_{L^1_T \dot B^{2-\alpha}_{d,1}} & \leq C \int_0^T \| u^{n-1} \|_{\dot B^{2- \alpha}_{d,1}} \| \partial_k\nabla u^{n-1} \|_{\dot B^{1}_{d,1}}  \diff t   \\
  & \leq C  \|  \partial_ku^{n-1}\|_{L^1_T \dot B^2_{d,1}}\|u^{n-1}\|_{L^\infty_T \dot B^{2 -\alpha}_{d,1}};
\end{split}
\end{equation*}
\begin{equation*}
\begin{split}
 \|  \partial_ku^{n-1} \cdot \nabla u^{n-1} \|_{L^1_T \dot B^{2-\alpha}_{d,1}} & \leq C \int_0^T \|  \partial_ku^{n-1} \|_{\dot B^{2- \alpha}_{d,1}} \| \nabla u^{n-1} \|_{\dot B^{1}_{d,1}}  \diff t   \\ & \leq C  \|u^{n-1}\|_{L^1_T \dot B^2_{d,1}}\|  \partial_ku^{n-1} \|_{L^\infty_T \dot B^{2 -\alpha}_{d,1}}.
\end{split}
\end{equation*}
 Putting together all these estimates and summing on $k=1,\cdots,d$  completes the proof.
\end{proof}

Denote $S_0:=\sup_{n\in\N} \|\nabla\sigma_0^n\|_{\dot B^1_{d,1}}\andf
U'_0:=\sup_{n\in\N} \|\nabla u_0^n\|_{\dot B^{2-\alpha}_{d,1}}.$
{}From Proposition \ref{nazwax} and \eqref{eq:smallcritic}, we get for all $T\geq0,$
taking $C_3$ slightly larger if need be,
\begin{equation}\label{eq:Dsn}
\|\nabla\sigma^n\|_{L^\infty_T\dot B^1_{d,1}}\leq C_3\bigl(S_0+\|\nabla u^n\|_{L_T^1\dot B^2_{2,1}}\bigr)
\end{equation}
and, thanks to Proposition \ref{itermomentumdiffestimate}, keeping in mind \eqref{eq:eps2}, 
\begin{equation}\label{eq:Dun1}
\|\nabla u^n\|_{L^\infty_T\dot B^{2-\alpha}_{d,1}\cap L^1_T\dot B^2_{d,1}}\leq C_4\bigl(U'_0
+\eta\|\nabla u^{n-1}\|_{L^\infty_T\dot B^{2-\alpha}_{d,1}\cap L_T^1\dot B^2_{2,1}}
+\varepsilon\|\nabla\sigma^{n-1}\|_{L^\infty_T\dot B^1_{d,1}}\bigr)\cdotp
\end{equation}
Therefore, combining \eqref{eq:Dsn} and \eqref{eq:Dun1}, we find that 
$$\displaylines{
\|\nabla u^n\|_{L^\infty_T\dot B^{2-\alpha}_{d,1}\cap L^1_T\dot B^2_{d,1}}+\frac1{2C_3}
\|\nabla\sigma^n\|_{L^\infty_T\dot B^1_{d,1}}\leq \frac{S_0}2+C_4U'_0 \hfill\cr\hfill+\frac12
\|\nabla u^n\|_{L_T^1\dot B^2_{2,1}} +\eta C_4\|\nabla u^{n-1}\|_{L^\infty_T\dot B^{2-\alpha}_{d,1}\cap L_T^1\dot B^2_{2,1}}+\varepsilon C_4\|\nabla\sigma^{n-1}\|_{L^\infty_T\dot B^1_{d,1}}.}$$
Provided $\eta$ and $\varepsilon$ also fulfill
$$
2\eta C_4\leq 1/2\andf 2\varepsilon C_3 C_4\leq 1/2,$$
one can then get by induction  the following uniform bound for all $T\geq0$:
\begin{equation}\label{eq:boundhigh}
\|\nabla u^n\|_{L^\infty_T\dot B^{2-\alpha}_{d,1}\cap L^1_T\dot B^2_{d,1}}+\frac1{C_3}
\|\nabla\sigma^n\|_{L^\infty_T\dot B^1_{d,1}}\leq  2S_0 +4C_4U'_0.
\end{equation}

%%%%%%%%%%%%%%%%%%%%%%%

\subsection*{Step 5: Convergence estimates}
Previous steps  established that the sequence of approximate solutions $(\sigma^n,u^n)_{n\in\N}$ exists globally and satisfies the  uniform estimates   \eqref{eq:smallcritic} and \eqref{eq:boundhigh}.
Proving  the convergence of that sequence will stem from  the following bounds for the differences between subsequent terms of the sequence.
\begin{prop}\label{stabro}
Let $(\sigma^n,u^n)_{n\in\N}$ be a sequence of approximate solutions. Then for 
 $\ds^n:=\sigma^{n}-\sigma^{n-1}$ and $\du^n:= u^{n}-u^{n-1}$, we have for all $n\geq1,$
$$\displaylines{\quad
\|\ds^n\|_{L^\infty_T \dot{B}^1_{d,1}}\leq C_5e^{C_5\|u^n\|_{L^1_T\dot{B}^2_{d,1}}}\Big[ \|\ds_0^n\|_{L^1_T\dot{B}^1_{d,1}} +\|\du^n\|_{L^1_T\dot{B}^2_{d,1}} \left(\|\sigma^{n-1}\|_{L^\infty_T\dot{B}^1_{d,1}}+1\right)\hfill\cr\hfill+ \|\nabla\sigma^{n-1}\|_{L^\infty_T\dot{B}^1_{d,1}} \|\du^n\|_{L^1_T\dot{B}^1_{d,1}}\Big],\quad}$$
and for all $n\geq2,$ 
$$\begin{aligned}
\|\du^n\|_{L^1_T\dot{B}^2_{d,1}} + \|\du^n\|_{L^\infty_T\dot{B}^{2-\alpha}_{d,1}}&\leq C_6\Big[\|\du^n_0\|_{\dot{B}^{2-\alpha}_{d,1}}+ \| \sigma^{n-2} \|_{L^\infty_T \dot B^1_{d,1}} \|  \du^{n-1} \|_{L^1_T \dot B^2_{d,1}} \\ 
& + \| \ds^{n-1} \|_{L^\infty_T \dot B^1_{d,1}} \| u^{n-1} \|_{L^1_T \dot B^2_{d,1}} +    \| \ds^{n-1} \|_{L^\infty_T \dot B^1_{d,1}} \| u^{n-2} \|_{L^1_T \dot B^2_{d,1}}\\
& +  \| \du^{n-1} \|_{L^1_T \dot B^2_{d,1}}  \|  \sigma^{n-1} \|_{L^\infty_T \dot B^1_{d,1}}   
 + \| u^{n-2} \|_{L^\infty_T \dot B^{2-\alpha}_{d,1}}  \| \du^{n-1} \|_{L^1_T \dot B^2_{d,1}}\\& + \| \du^{n-1} \|_{L^\infty_T \dot B^{2-\alpha}_{d,1}}  \|  u^{n-1} \|_{L^1_T \dot B^2_{d,1}}\Big]\cdotp
\end{aligned}$$
\end{prop}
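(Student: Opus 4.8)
The plan is to derive the two inequalities by writing the equations satisfied by the differences $\ds^n$ and $\du^n$ and then applying Propositions \ref{rubel} and \ref{fracdiffestimates} respectively, exactly as in the uniform estimates of Steps 3 and 4, but keeping careful track of which term is a genuine ``difference'' and which is only a coefficient. First I would subtract the equation \eqref{rhon} at rank $n-1$ from the one at rank $n$. Since both $\sigma^n$ and $\sigma^{n-1}$ are transported by \emph{different} velocity fields, I get
\begin{equation*}
\partial_t\ds^n+\div(u^n\ds^n)=-\div(\du^n\,\sigma^{n-1})-\div\du^n,
\end{equation*}
so that $\ds^n$ solves a transport equation of the form \eqref{trans} with velocity $u=u^n$ and right-hand side $f=-\div(\du^n\,\sigma^{n-1})-\div\du^n$. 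Applying Proposition \ref{rubel} together with Gronwall's lemma produces the factor $e^{C\|u^n\|_{L^1_T\dot B^2_{d,1}}}$; it then remains to bound $\|f\|_{L^1_T\dot B^1_{d,1}}$. Splitting $f$ as $-\sigma^{n-1}\nabla\div\du^n-\nabla u^n\otimes\nabla\sigma^{n-1}\;\text{(up to relabelling)}\;-\nabla\div\du^n$ and using the product law Lemma \ref{super}\ref{prod} and Lemma \ref{super}\ref{besovineq} gives exactly the three contributions $\|\du^n\|_{L^1_T\dot B^2_{d,1}}(\|\sigma^{n-1}\|_{L^\infty_T\dot B^1_{d,1}}+1)$ and $\|\nabla\sigma^{n-1}\|_{L^\infty_T\dot B^1_{d,1}}\|\du^n\|_{L^1_T\dot B^1_{d,1}}$ appearing in the statement, which yields the first inequality.

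For the second inequality, I would subtract \eqref{un} at rank $n-1$ from \eqref{un} at rank $n$, so that $\du^n$ satisfies the fractional heat equation \eqref{fracdiff} with $s=2-\alpha$, $p=d$, zero initial-data contribution beyond $\du^n_0$, and right-hand side $F(u^{n-1},\sigma^{n-1})-F(u^{n-2},\sigma^{n-2})$. The point is to expand this difference of the three pieces of $F$ in \eqref{iii} into telescoping sums, each factor being either a difference $\du^{n-1}$, $\ds^{n-1}$ or a bounded coefficient from the previous step. Concretely, for the nonlocal term, bilinearity of $I_\alpha$ gives $I_\alpha(u^{n-1},\sigma^{n-1})-I_\alpha(u^{n-2},\sigma^{n-2})=I_\alpha(\du^{n-1},\sigma^{n-1})+I_\alpha(u^{n-2},\ds^{n-1})$; for the term $\mu\sigma(-\Delta)^{\alpha/2}u$ one writes $\sigma^{n-1}(-\Delta)^{\alpha/2}u^{n-1}-\sigma^{n-2}(-\Delta)^{\alpha/2}u^{n-2}=\ds^{n-1}(-\Delta)^{\alpha/2}u^{n-1}+\sigma^{n-2}(-\Delta)^{\alpha/2}\du^{n-1}$; and similarly the convection term $(u^{n-1}\cdot\nabla)u^{n-1}-(u^{n-2}\cdot\nabla)u^{n-2}=(\du^{n-1}\cdot\nabla)u^{n-1}+(u^{n-2}\cdot\nabla)\du^{n-1}$. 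Applying Proposition \ref{fracdiffestimates} and then estimating each of these six terms in $L^1_T\dot B^{2-\alpha}_{d,1}$ by Lemma \ref{cruc} (for the two $I_\alpha$ pieces, with $\theta=0$), the product law \eqref{prodalpha} or Lemma \ref{super}\ref{prod}, Lemma \ref{super} for $(-\Delta)^{\alpha/2}$, and H\"older in time, reproduces precisely the six bracketed terms of the stated bound, with $\|\du^{n-1}\|$ or $\|\ds^{n-1}\|$ carrying the smallness and the surviving $\sigma$'s and $u$'s being the coefficients.

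The only genuinely delicate point is bookkeeping: one must be consistent about \emph{which} index of $F$ is frozen at $n-1$ versus $n-2$ — in \eqref{un} the forcing is $F(u^{n-1},\sigma^{n-1})$, so the difference involves ranks $n-1$ and $n-2$, which explains why coefficients like $\|\sigma^{n-2}\|_{L^\infty_T\dot B^1_{d,1}}$ and $\|u^{n-2}\|_{L^\infty_T\dot B^{2-\alpha}_{d,1}}$ show up — and one must choose, in each product, to place the difference in whichever factor keeps the H\"older exponents matching the hypotheses of Lemma \ref{cruc} and \eqref{prodalpha} (for instance using $\dot B^1_{d,1}$ for $\ds^{n-1}$ and $\dot B^2_{d,1}$ for $\du^{n-1}$, consistent with the convergence space of Step 5). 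No new analytic ingredient is needed beyond what was already used for the uniform bounds; the estimates are linear in the differences precisely because every nonlinearity in \eqref{ren} is at most quadratic, so each telescoped term is a product of one difference and one uniformly bounded factor.
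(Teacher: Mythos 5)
Your proposal is correct and follows essentially the same route as the paper: the same difference equations for $\ds^n$ and $\du^n$, the same telescoping of the three quadratic terms of $F$ into the six pieces $\sigma^{n-2}(-\Delta)^{\alpha/2}\du^{n-1}$, $\ds^{n-1}(-\Delta)^{\alpha/2}u^{n-1}$, $I_\alpha(u^{n-2},\ds^{n-1})$, $I_\alpha(\du^{n-1},\sigma^{n-1})$, $(u^{n-2}\cdot\nabla)\du^{n-1}$, $(\du^{n-1}\cdot\nabla)u^{n-1}$, and the same application of Propositions \ref{rubel} and \ref{fracdiffestimates} with Lemmas \ref{super} and \ref{cruc}. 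The only blemish is your written splitting of $f$ in the first part, where $-\div(\du^n\sigma^{n-1})$ should read $-\sigma^{n-1}\div\du^n-\du^n\cdot\nabla\sigma^{n-1}$ rather than the over-differentiated expression you transcribed from the higher-order estimate; the bounds you then assert are the correct ones, so this is a slip of notation, not of substance.
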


\begin{proof}  In order to prove the first item, we use the fact that 
\begin{align*}
\ds^n_t + \div(u^n\ds^n) = -\div(\du^n\sigma^{n-1})-\div(\du^n).
\end{align*}
Then, applying  Proposition \ref{rubel} with 
\[\sigma = \ds^n,\quad u=u^n\andf f=-\div(\du^n\sigma^{n-1})-\div(\du^n),\]
and using the all too familiar argumentation based on Lemma \ref{super}, we get
$$\begin{aligned}
\|f\|_{L^1_T\dot{B}^1_{d,1}}&\leq C\Bigl(\|\sigma^{n-1}\div(\du^n)\|_{L^1_T\dot{B}^1_{d,1}} + \|\du^n\cdot\nabla\sigma^{n-1}\|_{L^1_T\dot{B}^1_{d,1}} 
+ \|\div\du^n\|_{L^1_T\dot{B}^1_{d,1}}\Bigr)\\
&\leq C\|\du^n\|_{L^1_T\dot{B}^2_{d,1}} \left(\|\sigma^{n-1}\|_{L^\infty_T\dot{B}^1_{d,1}}+1\right)
+ C\|\du^n\|_{L^1_T\dot{B}^1_{d,1}}\|\nabla\sigma^{n-1}\|_{L^\infty_T\dot{B}^1_{d,1}},
\end{aligned}$$
whence the desired inequality.
%and by the basic Besov interpolation (see Lemma \ref{super}), since $1\in(2-\alpha,2),$ 
%\begin{align*}\|\du^n\|_{\dot{B}^1_{d,1}}\leq C \bigl(\|\du^n\|_{\dot{B}^{2-\alpha}_{d,1}} + \|\du^n\|_{\dot{B}^2_{d,1}}\bigr)\cdotp\end{align*}
\medbreak
For proving  the second item, we observe from \eqref{un} that
\begin{multline*}
\partial_t \du^n  + \mu (-\Delta)^{\alpha/2} \du^n= 
- \sigma^{n-2} (- \Delta)^{\alpha/2} \du^{n-1} - \ds^{n-1} (- \Delta)^{\alpha /2}u^{n-1}\\ + I_\alpha (u^{n-2} , \ds^{n-1} ) 
+ I_\alpha ( \du^{n-1} , \sigma^{n-1})  - (u^{n-2} \cdot \nabla) \du^{n-1} - ( \du^{n-1} \cdot \nabla ) u^{n-1}.
\end{multline*}

Arguing  as in the previous steps, we get:
\begin{align*}
\|\sigma^{n-2} (- \Delta)^{\alpha/2} \du^{n-1} \|_{L^1_T \dot B^{2 - \alpha}_{d,1}} & \leq C   \| \du^{n-1} \|_{L^1_T \dot B^2_{d,1}} \| \sigma^{n-2} \|_{L^\infty_T \dot B^1_{d,1}} ; \\
\|\ds^{n-1} (- \Delta)^{\alpha /2} u^{n-1}\|_{L^1_T \dot B^{2 - \alpha}_{d,1}} & \leq C  \| u^{n-1} \|_{L^1_T \dot B^2_{d,1}}  \|  \ds^{n-1} \|_{L^\infty_T \dot B^1_{d,1}} ; \\
\|I_\alpha (u^{n-2} , \ds^{n-1} )\|_{L^1_T \dot B^{2 - \alpha}_{d,1}} & \leq C   \| u^{n-2} \|_{L^1_T \dot B^2_{d,1}} \| \ds^{n-1} \|_{L^\infty_T \dot B^1_{d,1}} ; \\
\|I_\alpha ( \du^{n-1} , \sigma^{n-1})\|_{L^1_T \dot B^{2 - \alpha}_{d,1}} & \leq C   \| \du^{n-1} \|_{L^1_T \dot B^2_{d,1}} \| \sigma^{n-1} \|_{L^\infty_T \dot B^1_{d,1}} ; \\
\|(u^{n-2} \cdot \nabla) \du^{n-1}\|_{L^1_T \dot B^{2 - \alpha}_{d,1}} & \leq C    \|  \du^{n-1} \|_{L^1_T \dot B^2_{d,1}} \| u^{n-2} \|_{L^\infty_T \dot B^{2-\alpha}_{d,1}} \ ;\\
\| ( \du^{n-1} \cdot \nabla ) u^{n-1} \|_{L^1_T \dot B^{2 - \alpha}_{d,1}} & \leq C \|\du^{n-1}\|_{L^\infty_T\dot{B}^{2-\alpha}_{d,1}} \|u^{n-1}\|_{L^1_T\dot{B}^2_{d,1}}\ .
\end{align*}
Then, taking advantage of  Proposition \ref{fracdiffestimates} completes  the proof of the second item.
\end{proof}

\subsection*{Step 6: The proof of existence}

In order to show that the sequence $(\sigma^n,u^n)_{n\in\N}$ converges, we are going to establish 
that, for all $T>0,$  it is a Cauchy sequence in the space
$$\cC([0,T];\dot B^1_{d,1})\times\bigl(\cC([0,T];\dot B^{2-\alpha}_{d,1})\cap L^1_T(\dot B^2_{d,1})\bigr)\cdotp$$
That property will come up from Proposition \ref{stabro} and Estimates \eqref{eq:smallcritic}--\eqref{eq:boundhigh} that entail
for all $n\geq1,$ changing slightly $C_5$ and $C_6$ and denoting by $C_0$ some 
constant that may  be computed from the right-hand side of  \eqref{eq:boundhigh},
\begin{eqnarray}
&&\|\ds^n\|_{L^\infty_T\dot{B}^1_{d,1}}\leq C_5  \|\ds^n_0\|_{\dot{B}^1_{d,1}} + 
C_5\|\du^n\|_{L^1_T\dot{B}^2_{d,1}}
+C_0\|\du^n\|_{L^1_T\dot{B}^1_{d,1}},\label{eq:ds}\\[2ex]
 &&\|\du^n\|_{L^1_T\dot{B}^2_{d,1}} + \|\du^n\|_{L^\infty_T\dot{B}^{2-\alpha}_{d,1}}
 \leq C_6\bigl(\|\du^n_0\|_{\dot{B}^{2-\alpha}_{d,1}}\nonumber\\
 &&\hspace{4cm}+ \eta \|\du^{n-1}\|_{L^1_T\dot{B}^2_{d,1}} + \eta \|\du^{n-1}\|_{L^\infty_T\dot{B}^{2-\alpha}_{d,1}}
 +\varepsilon    \|\ds^{n-1}\|_{L^\infty_T\dot{B}^1_{d,1}}\bigr)\cdotp\nonumber
 \end{eqnarray}

 Hence plugging the first inequality (at rank $n-1$) in the second one yields if $C_5\varepsilon\leq\eta,$
 $$\displaylines{
 \|\du^n\|_{L^1_T\dot{B}^2_{d,1}} + \|\du^n\|_{L^\infty_T\dot{B}^{2-\alpha}_{d,1}}
 \leq C_6\Bigl(\|\du^n_0\|_{\dot{B}^{2-\alpha}_{d,1}}+ C_5\varepsilon   \|\ds^n_0\|_{\dot{B}^1_{d,1}}\hfill\cr\hfill
 +2\eta( \|\du^{n-1}\|_{L^1_T\dot{B}^2_{d,1}} + \|\du^{n-1}\|_{L^\infty_T\dot{B}^{2-\alpha}_{d,1}})
 +C_0\varepsilon \|\du^{n-1}\|_{L^1_T\dot{B}^1_{d,1}}\Bigr)\cdotp} $$
 Using  interpolation, then Young inequality, we get that
 $$  \begin{aligned} C_0\|\du^{n-1}\|_{\dot{B}^1_{d,1}}&\leq C_0 \|\du^{n-1}\|_{\dot{B}^{2-\alpha}_{d,1}}^{\frac1\alpha}
 \|\du^{n-1}\|_{\dot{B}^2_{d,1}}^{\frac{\alpha-1}\alpha}\\
 &\leq\biggl(\frac{\alpha-1}\alpha\biggr)\|\du^{n-1}\|_{\dot B^2_{d,1}}+\frac{C_0^\alpha}\alpha  \|\du^{n-1}\|_{\dot{B}^{2-\alpha}_{d,1}}.\end{aligned}$$
 Therefore, denoting 
 $$
 \dU^n_T:= \|\du^n\|_{L^1_T\dot{B}^2_{d,1}} + \|\du^n\|_{L^\infty_T\dot{B}^{2-\alpha}_{d,1}}
 $$
 and assuming (just for expository purpose) that $C_5=C_6=1$, we end up with 
 $$
 \dU^n_T\leq \|\du^n_0\|_{\dot{B}^{2-\alpha}_{d,1}}+\varepsilon   \|\ds^n_0\|_{\dot{B}^1_{d,1}}
 + 2\eta\dU^{n-1}_T+ C_0^\alpha\varepsilon\int_0^T\dU^{n-1}_t\,dt.
 $$
 We sum over $n\geq2$ and get
 $$
  \sum_{n\geq2}  \dU^n_T\leq   \sum_{n\geq2}\bigl( \|\du^n_0\|_{\dot{B}^{2-\alpha}_{d,1}}+\varepsilon   \|\ds^n_0\|_{\dot{B}^1_{d,1}}\bigr)
  +\sum_{n\geq1} \biggl( 2\eta\dU^{n}_T+ C_0^\alpha\varepsilon\int_0^T\dU^{n}_t\,dt\biggr)\cdotp$$
 For small enough $\eta,$  bounding $\dU_T^1$ by means of \eqref{eq:smallcritic},  this implies that
 $$
  \sum_{n\geq1}  \dU^n_T\leq2\biggl(  
  \varepsilon+ \sum_{n\geq1}\bigl( \|\du^n_0\|_{\dot{B}^{2-\alpha}_{d,1}}+\varepsilon   \|\ds^n_0\|_{\dot{B}^1_{d,1}}\bigr)
 +C_0^\alpha \varepsilon \int_0^T \sum_{n\geq1}\dU^n_t\,dt\biggr)\cdotp$$
 Then, using Gronwall lemma and, we get
  $$
  \sum_{n\geq1}  \dU^n_T\leq  
 2\biggl(\varepsilon+ \sum_{n\geq1}\bigl( \|\du^n_0\|_{\dot{B}^{2-\alpha}_{d,1}}+\varepsilon   \|\ds^n_0\|_{\dot{B}^1_{d,1}}\bigr)\biggr)
 \exp\bigl(2C_0^\alpha\varepsilon T\bigr)\cdotp$$
The right-hand side being finite for all $T\geq0,$ 
% {}From those two inequalities, we get 
% $$\displaylines{ \|\du^n\|_{L^1_T\dot{B}^2_{d,1}} + \|\du^n\|_{L^\infty_T\dot{B}^{2-\alpha}_{d,1}}+\frac1{C_0}
 %\|\ds^n\|_{L^\infty_T\dot{B}^1_{d,1}}\leq \frac{C_5}{C_0}   \|\ds^n_0\|_{\dot{B}^1_{d,1}}
 %+2C_6\|\du^n_0\|_{\dot{B}^{2-\alpha}_{d,1}}\hfill\cr\hfill
 %+2C_6\eta\bigl(  \|\du^{n-1}\|_{L^1_T\dot{B}^2_{d,1}} + \|\du^{n-1}\|_{L^\infty_T\dot{B}^{2-\alpha}_{d,1}}\bigr)
 %+2C_6\varepsilon    \|\ds^{n-1}\|_{L^\infty_T\dot{B}^1_{d,1}}\bigr)\cdotp}$$
 %Hence if we assume that $\eta$ and $\varepsilon$ are so small to satisfy also 
 %$$ 2C_6\eta\leq1/2\andf 2C_6C_0\varepsilon\leq 1/2$$
 %then we eventually find after summation that 
 %$$\displaylines{\sum_{n\geq1} \Bigl(  \|\du^n\|_{L^1_T\dot{B}^2_{d,1}} + \|\du^n\|_{L^\infty_T\dot{B}^{2-\alpha}_{d,1}}+\frac1{C_0}
 %\|\ds^n\|_{L^\infty_T\dot{B}^1_{d,1}}\Bigr)\hfill\cr\hfill
 %\leq \frac{C_5}{C_0} \|\sigma_0\|_{\dot B^1_{d,1}} +2\sum_{n\geq1} \Bigl(\frac{C_5}{C_0}   \|\ds^n_0\|_{\dot{B}^1_{d,1}}
% +C_6\|\du^n_0\|_{\dot{B}^{2-\alpha}_{d,1}}\Bigr)\cdotp}$$
  one may conclude that  $(u^n)_{n\in\N}$ is a Cauchy sequence in the Banach\footnote{See Lemma \ref{super}.} space 
$\cC([0,T];\dot{B}^{2-\alpha}_{d,1})\cap L^1_T\dot B^2_{d,1}$ for all $T\geq0.$
Then reverting to \eqref{eq:ds} and using a similar argument, we discover
that  $(\sigma^n)_{n\in\N}$ is a Cauchy sequence in the space $\cC([0,T];\dot{B}^{1}_{d,1}).$
So finally,  there exists a pair  $(\sigma, u)$ such that for all $T\geq0,$ we have
\begin{equation}\label{7a}
 (\sigma^n,u^n) \to (\sigma,u) \mbox{ in } \cC([0,T];\dot{B}^1_{d,1})\times \bigl(\cC([0,T];\dot{B}^{2-\alpha}_{d,1})\cap L^1_T\dot B^2_{p,1}\bigr)\cdotp
\end{equation}
The uniform estimates of the previous step and the properties of the Besov spaces for the weak convergence guarantee 
that we also have $(\nabla\sigma,\nabla u)$ in $L^\infty_T \dot{B}^1_{d,1}\times L^\infty_T\dot{B}^{2-\alpha}_{d,1}$
with the same bounds. 
\medbreak
Let us now check that  $(\sigma,u)$ indeed fulfills  \eqref{ren} in the sense of distributions. 
Regarding the continuity equation, we find that 
\begin{equation*}
 \sigma_t + \div (\sigma u) + \div u = (\sigma -\sigma^n)_t + \div ( \sigma u - \sigma^n u^n) + \div(u-u^n).
\end{equation*}
We have just proved that $\sigma_n\to \sigma$ in $\cC([0,T];\dot{B}^1_{d,1}),$ hence uniformly 
on $[0,T]\times\R^d.$ Therefore, $ (\sigma -\sigma^n)_t \to 0$ in the sense of distributions. 
To prove the convergence of  the last two terms,   we note that  the convergence property of \eqref{7a} for $u^n,$ 
the uniform estimates of the previous step and interpolation ensure that 
$$u^n\to u\ \hbox{ in }\ \cC([0,T];\dot B^1_{d,1}).$$
Hence, using once more the embedding of $\dot B^1_{d,1}$ in the set of continuous bounded functions, 
we see that $\sigma^n u^n$ converges uniformly to $\sigma u$ on $[0,T]\times\R^d.$
Therefore, we eventually have 
\begin{equation*}
 (\sigma -\sigma^n)_t + \div ( \sigma u - \sigma^n u^n) +\div(u-u^n) \to 0 \mbox{ in } \mathcal{D}'(\R_+ \times \R^d).
\end{equation*}

One can argue similarly for the momentum equation, writing that
\begin{multline*}
 u_t + u\cdot \nabla u + \mu(1+\sigma) (-\Delta)^{\alpha/2}u - I_\alpha(u,\sigma)
= (u-u^n)_t + (u\cdot \nabla u - u^n \cdot \nabla u^n)\\ + \mu\bigl((1+\sigma) (-\Delta)^{\alpha/2}u-(1+\sigma^n) (-\Delta )^{\alpha/2}u^n\bigr) - (I_\alpha(u,\sigma)-I_\alpha(u^n,\sigma^n)).
\end{multline*}
Making an extensive use of product estimates in Besov spaces, 
embedding and Lemma \ref{cruc}, one may  conclude that the right-hand side is going to zero in the distributional sense.
Hence $(\sigma,u)$ is a solution to System \eqref{ren}. 
\smallbreak
We still have to establish that  $(\nabla\sigma,\nabla u)$ is in $\cC(\R_+;\dot{B}^1_{d,1}\times \dot{B}^{2-\alpha}_{d,1})$ and  that 
$\nabla u$ is in  $L^1\dot B^2_{d,1}.$
The first property follows from the second one and  classical properties for the transport equation and
parabolic equations with fractional Laplacian. 

As regards the proof of $\nabla u\in L^1\dot B^2_{d,1},$
the difficulty is that having $(\nabla u^n)_{n\in\N}$  bounded in  $L^1\dot B^2_{d,1}$
just ensures that the weak limit $\nabla u$ is a measure on $\R_+$ with values on $\dot B^2_{d,1}.$ We follow ideas stated in \cite{DM777}.
In order to show that, indeed,  $\nabla u\in L^1\dot B^2_{d,1},$ one may use 
the fact that, owing to $\nabla u\in L^1\dot B^1_{d,1}$ and  Bernstein inequality, one may write  for all $J\in\N$ and  all   $n\in\N,$
$$\begin{aligned}
\sum_{j=-\infty}^J2^{2j}\int_{\R_+} \|\dot\Delta_j\nabla u\|_d\diff t
&\leq \sum_{j=-\infty}^J 2^{2j}\int_{\R_+}\|\dot\Delta_j\nabla u^n\|_d\diff t
+\sum_{j=-\infty}^J2^{2j}\int_{\R_+}\|\dot\Delta_j\nabla(u^n-u)\|_d\diff t\\
&\leq \|\nabla u^n\|_{L^1\dot B^2_{d,1}}+C2^J \sum_{j=-\infty}^J2^{2j}\int_{\R_+}\|\dot\Delta_j(u^n-u)\|_d\diff t.
\end{aligned}
$$
The first term in the right-hand side is uniformly bounded (see \eqref{eq:boundhigh})
while the last one tends to $0$ for $n$ going to $\infty.$
This completes the proof of the fact that $\nabla u$ belongs to $L^1\dot B^2_{d,1}$ and is bounded
by the right-hand side of \eqref{eq:boundhigh}.

\subsection*{Step 7: The case of a large initial velocity} 

We here  explain how the above arguments have to be adapted to handle large  initial 
velocity, assuming only  that   for some  small enough (absolute) $\eta>0,$
\begin{equation*}
\|\sigma_0\|_{\dot B^1_{d,1}}\leq\eta.
\end{equation*}
We keep the iterative scheme of Step 2 to define a sequence $(\sigma^n,u^n)_{n\in\N}$ of approximate solutions
(note that, there, no smallness is required whatsoever).
Then we denote
$$
U_0:=\sup_{n\in\N} \|u_0^n\|_{\dot B^{2-\alpha}_{d,1}},\quad
U'_0:=\sup_{n\in\N} \|\nabla u_0^n\|_{\dot B^{2-\alpha}_{d,1}}
 \andf
 S_0:=\sup_{n\in\N} \|\nabla\sigma_0^n\|_{\dot B^{1}_{d,1}}.
$$
We also introduce the notation
$$U^n_T:=\|u^n\|_{L_T^1\dot B^2_{d,1}}.$$
We claim that there exists a time $T>0$ (that will be bounded by below in terms
of $U_0$ and $U'_0,$ see below) so that for all $n\in\N,$ we have
\begin{equation}\label{eq:smalln}\begin{aligned}
&\|\sigma^n\|_{L_T^\infty\dot B^1_{d,1}}\leq2\eta,\qquad
\|u^n\|_{L^\infty_T\dot B^{2-\alpha}_{d,1}}+\|u^n\|_{L^1_T\dot B^{2}_{d,1}}\leq 2 U_0\\
\andf&\|\nabla u^n\|_{L^\infty_T\dot B^{2-\alpha}_{d,1}}+\|\nabla u^n\|_{L^1_T\dot B^{2}_{d,1}}
+\|\nabla\sigma^n\|_{L^\infty_T\dot B^{1}_{d,1}}\leq M(S_0+U'_0).
\end{aligned}\end{equation}
We shall argue by induction. The case $n=0$ being obvious, let us assume that \eqref{eq:smalln}
is true for $n-1$ and suppose that, for a small enough $c>0,$ 
\begin{equation}\label{eq:alpha}
U^{n-1}_T\leq c.\end{equation}
Then Inequality \eqref{ul1} tells us that 
$$\|u^n\|_{L^\infty_T\dot B^{2-\alpha}_{d,1}}+\|u^n\|_{L^1_T\dot B^{2}_{d,1}}\leq U_0
+2C_2U_T^{n-1}\bigl(\eta+ U_0\bigr)\cdotp$$
Hence \eqref{eq:smalln} is fulfilled by $u^n$ if
\begin{equation}\label{eq:cond0}
2 c C_2(\eta +U_0)\leq U_0.
\end{equation}
In order to bound the high norm of $u^n$, we shall slightly modify Proposition \ref{itermomentumdiffestimate}, 
estimating the term with $u^{n-1}\cdot\partial_k\nabla u^{n-1}$ as follows:
$$ \begin{aligned}
\| u^{n-1} \cdot   \partial_k\nabla u^{n-1} \|_{L^1_T \dot B^{2-\alpha}_{d,1}} & \leq C \int_0^T \| u^{n-1} \|_{\dot B^{1}_{d,1}} \| \partial_k\nabla u^{n-1} \|_{\dot B^{2-\alpha}_{d,1}}  \diff t \\
&\leq C \int_0^T  \| u^{n-1} \|_{\dot B^{2-\alpha}_{d,1}}^{\frac1\alpha} 
\| u^{n-1} \|_{\dot B^{2}_{d,1}}^{1-\frac1\alpha}
 \| \nabla u^{n-1} \|_{\dot B^{2}_{d,1}}^{\frac1\alpha}
  \| \nabla u^{n-1} \|_{\dot B^{2-\alpha}_{d,1}}^{1-\frac1\alpha} \diff t\\
  &\leq C(U^{n-1}_T)^{1-\frac1\alpha} \| u^{n-1} \|_{L_T^\infty\dot B^{2-\alpha}_{d,1}}^{\frac1\alpha} 
 \| \nabla u^{n-1} \|_{L_T^1\dot B^{2}_{d,1}}^{\frac1\alpha}
  \| \nabla u^{n-1} \|_{L_T^\infty\dot B^{2-\alpha}_{d,1}}^{1-\frac1\alpha}.\end{aligned} $$
  Bounding the other terms as in the proof of Proposition \ref{itermomentumdiffestimate} and using the first line of \eqref{eq:smalln} 
  at rank $n-1$ and \eqref{eq:alpha}, we end up with
\begin{multline}\label{eq:Dun}
\|\nabla u^n\|_{L^1_T \dot B^{2}_{d,1}\cap L^\infty_T \dot  B^{2-\alpha}_{d,1} } \leq C_4  \Big[ U'_0 +\eta \|\nabla u^{n-1}\|_{L^1_T \dot B^2_{d,1}}\\
+ c \bigl(\|\nabla \sigma^{n-1} \|_{L^\infty_T \dot B^1_{d,1}}+ \| \nabla u^{n-1} \|_{L^\infty_T \dot B^{2 -\alpha}_{d,1}}\bigr)  
+ (U^{n-1}_T)^{1-\frac1\alpha}  U_0^{\frac1\alpha} 
 \| \nabla u^{n-1} \|_{L_T^1\dot B^{2}_{d,1}}^{\frac1\alpha}
  \| \nabla u^{n-1} \|_{L_T^\infty\dot B^{2-\alpha}_{d,1}}^{1-\frac1\alpha}\Big]\cdotp\end{multline}
  Let us assume for a while that 
  \begin{equation}\label{eq:alphan}
U^{n}_T\leq c.\end{equation}
Then   Proposition \ref{nazwa} tells us that the first inequality of \eqref{eq:smalln} is fulfilled at rank $n$ if $c$ has been chosen so that
\begin{equation}\label{eq:cond1}
C_1 c\leq \log(3/2)\andf 3C_1c\leq\eta.
\end{equation} 
Then,  assuming also that $C_3c\leq\log2,$ Proposition \ref{nazwax} guarantees (increasing slightly $C_3$ if need be)  that 
$$\|\nabla\sigma^n\|_{L^\infty_T\dot B^1_{d,1}}\leq 2S_0+C_3\|\nabla u^n\|_{L^1_T\dot B^2_{d,1}}.$$
  At this stage, combining with \eqref{eq:Dun} and assuming also that $c\leq\eta,$ we discover that
$$\displaylines{
\frac12\|\nabla u^n\|_{L^1_T \dot B^{2}_{d,1}\cap L^\infty_T \dot  B^{2-\alpha}_{d,1} }+\frac1{2C_3}
\|\nabla\sigma^n\|_{L^\infty_T\dot B^1_{d,1}}\leq 2S_0
+ C_4  \Big[ U'_0 \hfill\cr\hfill
+ \eta \bigl(\|\nabla \sigma^{n-1} \|_{L^\infty_T \dot B^1_{d,1}}+ \| \nabla u^{n-1} \|_{L^1_T \dot B^{2}_{d,1}\cap L^\infty_T \dot B^{2 -\alpha}_{d,1}}\bigr)  
+ c^{1-\frac1\alpha}  U_0^{\frac1\alpha} 
 \| \nabla u^{n-1} \|_{L_T^1\dot B^{2}_{d,1}}^{\frac1\alpha}
  \| \nabla u^{n-1} \|_{L_T^\infty\dot B^{2-\alpha}_{d,1}}^{1-\frac1\alpha}\Big]\cdotp}
$$
  Since we assumed that \eqref{eq:smalln} is fulfilled at rank $n-1,$ we conclude that 
  $$\frac12\|\nabla u^n\|_{L^1_T \dot B^{2}_{d,1}\cap L^\infty_T \dot  B^{2-\alpha}_{d,1} }+\frac1{2C_3}
\|\nabla\sigma^n\|_{L^\infty_T\dot B^1_{d,1}}\leq 2S_0
+ C_4  \Big[ U'_0
+ (\eta M+ c^{1-\frac1\alpha}  U_0^{\frac1\alpha} )(S_0+U'_0)\Big]\cdotp
$$
Therefore, assuming with no loss of generality that $C_3\geq1$ and $C_4\geq2,$ 
  $$\|\nabla u^n\|_{L^1_T \dot B^{2}_{d,1}\cap L^\infty_T \dot  B^{2-\alpha}_{d,1} }+
\|\nabla\sigma^n\|_{L^\infty_T\dot B^1_{d,1}}\leq 2C_3C_4\Bigl[S_0
+  U'_0
+ (\eta M+ c^{1-\frac1\alpha}  U_0^{\frac1\alpha} )(S_0+U'_0)\Bigr]\cdotp
$$
Let us take $M:=4C_3C_4.$ Then the second line of \eqref{eq:smalln} is fulfilled provided 
\begin{equation*}
\eta M + c^{1-\frac1\alpha}  U_0^{\frac1\alpha} \leq1.
\end{equation*}
One can thus take $\eta \leq 1/(2M)$ (a constraint that does not depend on the size of $u_0$) and  $c$ fulfilling \eqref{eq:cond0}, \eqref{eq:cond1}
and $c^{1-\frac1\alpha}  U_0^{\frac1\alpha} \leq1/2.$ 
In order to complete the proof of uniform estimates,  we still have to justify  \eqref{eq:alphan}. 
Again, this stems from interpolation, as 
$$\begin{aligned}
U^n(T)&\leq C\int_0^T\|\nabla u^n\|_{\dot B^{2-\alpha}_{d,1}}^{\frac1\alpha}\|\nabla u^n\|_{\dot B^2_{d,1}}^{1-\frac1\alpha}\diff t\\
&\leq C T^{\frac1\alpha} \|\nabla u^n\|_{L_T^\infty\dot B^{2-\alpha}_{d,1}}^{\frac1\alpha}\|\nabla u^n\|_{L_T^1\dot B^2_{d,1}}^{1-\frac1\alpha}\diff t
\leq C MT^{\frac1\alpha} (S_0+U'_0).\end{aligned}
$$
Therefore, one can conclude to \eqref{eq:smalln}  provided $T$ fulfills
\begin{equation*}
\max\Bigl(U_0^{\frac1\alpha}\bigl(T^{\frac1\alpha} (S_0+U'_0)\bigr)^{1-\frac1\alpha},  T^{\frac1\alpha}(S_0+U'_0)\Bigr)\leq\varepsilon
\end{equation*}
for a small enough absolute $\varepsilon>0.$  
\medbreak
At this stage, one can easily repeat Step 6 so as to prove the convergence of the sequence $(\sigma^n,u^n)_{n\in\N}$
to some solution of \eqref{ren} fulfilling \eqref{eq:ET}. The details are left to the reader.

%%%%%%%%%%%%%%%%%%%%%%%%%%%%%%%%%%%%%%%

\subsection*{Step 8: Uniqueness}
In this step, we assume that we are given two solutions $(\sigma,u)$ and
$(\bar\sigma,\bar u)$ of \eqref{ren} on $[0,T]$ satisfying  \eqref{eq:ET}. 
One can assume that $(\sigma,u)$  is the one that has been constructed before, and thus, passing to the limit in \eqref{eq:smalln},
\begin{equation}\label{eq:smallsigma}
\|\sigma\|_{L_T^\infty\dot B^1_{d,1}}\leq2\eta.
\end{equation}
Proving uniqueness is essentially the same  as proving convergence, except
that we now consider the  difference  $(\ds,\du):=(\bar\sigma-\sigma, u-\bar u)$ between the  two solutions. 
Since we have 
$$\left\{\begin{aligned}
&\partial_t\ds+\div(\bar u\ds)=-\div\du-\div(\sigma\du),\\[1ex]
&\partial_t\du+\mu(-\Delta)^{\alpha/2}\du=I_\alpha(\du,\sigma)
+I_\alpha(\bar u,\ds) \\
&\hspace{3cm}-\mu\sigma(-\Delta)^{\alpha/2}\du-\mu\ds(-\Delta)^{\alpha/2}\bar u
-\bar u\cdot\nabla\du-\du\cdot\nabla u,\end{aligned}\right.
$$
applying Propositions \ref{rubel} and \ref{fracdiffestimates} and
the usual product laws in Besov spaces yields for all $t\in[0,T],$
$$\begin{aligned}& \|\ds(t)\|_{\dot B^1_{d,1}} \leq 
C\int_0^t \Bigl(\|\nabla\bar u\|_{\dot B^1_{d,1}}\|\ds\|_{\dot B^1_{d,1}}
+\|\div\du\|_{\dot B^1_{d,1}}\bigl(1+\|\sigma\|_{\dot B^1_{d,1}}\bigr) 
+\|\nabla\sigma\|_{\dot B^1_{d,1}} \|\du\|_{\dot B^1_{d,1}}\Bigr)\diff\tau,\\
& \|\du(t)\|_{\dot B^{2-\alpha}_{d,1}} + \|\du\|_{L^1_t\dot B^2_{d,1}}\leq 
 C\int_0^t\Bigl(\| \sigma \|_{\dot B^1_{d,1}}\|\du\|_{\dot B^2_{d,1}}+ \| \ds\|_{\dot B^1_{d,1}} \| \bar u \|_{\dot B^2_{d,1}} 
 \\&\hspace{7cm} + \|\bar u\|_{\dot B^{1}_{d,1}}  \|\nabla\du\|_{\dot B^{2-\alpha}_{d,1}}
 + \| \nabla u \|_{\dot B^{1}_{d,1}} \| \du \|_{\dot B^{2-\alpha}_{d,1}}\Bigr)\diff\tau.
 \end{aligned}$$
 Then, taking advantage of \eqref{eq:smallsigma}, we get for any small enough $c>0$ and $t\in[0,T],$
  \begin{multline}\label{eq:du}\|\du(t)\|_{\dot B^{2-\alpha}_{d,1}} + \|\du\|_{L^1_t\dot B^2_{d,1}}
+c  \|\ds(t)\|_{\dot B^1_{d,1}}
\\\leq C\int_0^t \Bigl(\| \ds\|_{\dot B^1_{d,1}} \| \bar u \|_{\dot B^2_{d,1}}
+c \|\du\|_{\dot B^1_{d,1}}\|\nabla\sigma\|_{\dot B^1_{d,1}} 
 + \|\bar u\|_{\dot B^{1}_{d,1}}  \|\du\|_{\dot B^{3-\alpha}_{d,1}}
  + \|  \nabla u \|_{\dot B^{1}_{d,1}} \| \du \|_{\dot B^{2-\alpha}_{d,1}}\Bigr)\diff\tau.\end{multline} 
 By interpolation and Young inequality, one may write
 $$\begin{aligned}
 cC \|\du\|_{\dot B^1_{d,1}}\|\nabla\sigma\|_{\dot B^1_{d,1}} &\leq
 cC\|\nabla\sigma\|_{\dot B^1_{d,1}} \|\du\|_{\dot B^{2-\alpha}_{d,1}}^{\frac1\alpha}
  \|\du\|_{\dot B^2_{d,1}}^{1-\frac1\alpha}\\
  &\leq \frac{\alpha-1}\alpha c^{\frac\alpha{\alpha-1}} \|\du\|_{\dot B^2_{d,1}}
  +\frac{C^\alpha}\alpha\|\nabla\sigma\|_{\dot B^1_{d,1}}^\alpha \|\du\|_{\dot B^{2-\alpha}_{d,1}},\end{aligned}
  $$
 and, similarly, 
 $$\begin{aligned}
 C \|\bar u \|_{\dot B^{1}_{d,1}} \| \du \|_{\dot B^{3-\alpha}_{d,1}}
  &\leq C\|\bar u\|_{\dot B^1_{d,1}}\|\du\|_{\dot B^2_{d,1}}^{\frac1\alpha}\|\du\|_{\dot B^{2-\alpha}_{d,1}}^{1-\frac1\alpha}\\
&\leq \frac1{\alpha} \|\du\|_{\dot B^2_{d,1}}
 +\frac{\alpha-1}\alpha (C\|\bar u\|_{\dot B^1_{d,1}})^{\frac{\alpha}{\alpha-1}}\|\du\|_{\dot B^{2-\alpha}_{d,1}}.
 \end{aligned}
 $$
 Hence taking $c$ small enough and plugging those two inequalities in \eqref{eq:du}, we get
 $$\displaylines{\quad
 \|\du(t)\|_{\dot B^{2-\alpha}_{d,1}} + \|\du\|_{L^1_t\dot B^2_{d,1}}
+c  \|\ds(t)\|_{\dot B^1_{d,1}}
\hfill\cr\hfill\leq C\int_0^t \Bigl(\| \ds\|_{\dot B^1_{d,1}} \| \bar u \|_{\dot B^2_{d,1}}
   + \bigl(\|\nabla\sigma\|_{\dot B^1_{d,1}}^\alpha+\|\bar u \|_{\dot B^{1}_{d,1}}^{\frac\alpha{\alpha-1}}+\|\nabla u \|_{\dot B^{1}_{d,1}}\bigr) \| \du \|_{\dot B^{2-\alpha}_{d,1}}\Bigr)\diff\tau.\quad}
 $$
 As having \eqref{eq:ET} implies that 
 $$\bar u\in L_T^\infty\dot B^{1}_{d,1}, \quad u,\bar u\in L_T^1\dot B^2_{d,1}\andf
 \nabla\sigma\in L_T^\infty\dot B^1_{d,1},$$
 applying   Gronwall lemma ensures that $(\ds,\du)\equiv0$ on $[0,T].$
 This implies uniqueness on the whole interval $[0,T].$

%%%%%%%%%%%%%%%%%%%%%%%%%%%%%%%%%%%%%%%%%%%%%%%%%%%%%%%%%%

\appendix
\section{Proofs of Corollary \ref{coro1}, Propositions \ref{rubel} and \ref{fracdiffestimates}}

\begin{proof}[Proof of Corollary \ref{coro1}]
On the one hand, the fact that $u \in L^\infty(\R_+;\dot B^{2-\alpha}_{d,1}) \cap L^1 (\R_+;\dot B^2_{d,1})$
implies that   we have 
\begin{equation}\label{ax1}
 u\in L^m(\R_+;\dot B^1_{d,1}) \mbox{ \ \ \ with \ \ \ } m = \frac{\alpha}{\alpha-1}<\infty.
\end{equation}
On the other hand,     one can prove from the equation and the regularity of the solution given by Theorem \ref{main-intro} that
\begin{equation}\label{ax2}
 u_t \in L^1(\R_+;\dot B^{2-\alpha}_{d,1}\cap \dot B^{3-\alpha}_{d,1}) \subset L^1(\R_+;\dot B^1_{d,1}).
\end{equation}
Therefore, since $\dot B^1_{d,1}$ is embedded continuously in $L^\infty,$  we discover that for all $k\in \N,$   
\begin{equation*}
\|u\|_{L^1(k,k+1;L^\infty)}+\|u_t\|_{L^1(k,k+1;L^\infty)} \to 0 \mbox{ \ \ as \  \ } k \to \infty,
\end{equation*}
which readily gives the assertion of the corollary.
%\begin{equation*}\|u(t)\|_{L^\infty} \to 0 \mbox{ \ \ \ as \ \ \ } t \to \infty.\end{equation*}
\end{proof}

\begin{proof}[Proof of Proposition \ref{rubel}]
The proof of existence begin standard,  we focus  on the estimates. Applying $\dot\Delta_j$ to \eqref{trans} yields
\begin{align*}
\partial_t\dot\Delta_j\sigma + \dot\Delta_j[\sigma{\rm div}u] + u\cdot\nabla\dot\Delta_j\sigma =  \dot\Delta_j f - R_j,
\end{align*}
where
\begin{align*}
R_j := u\cdot\nabla\dot\Delta_j\sigma - \dot\Delta_j ( u\cdot\nabla\sigma ) .
\end{align*}

Multiplying \eqref{trans} by $|\Delta_j\sigma|^{d-2}\Delta_j\sigma$ and integrating with respect to $x,$ we obtain
\begin{multline}\label{label}
 \frac{1}{d}\frac{d}{dt}\|\dot\Delta_j\sigma\|_d^d + \underbrace{\int_{\R^d}\dot\Delta_j[\sigma {\rm div} u]|\dot\Delta_j\sigma|^{d-2}\dot\Delta_j\sigma \diff x}_{I_1} + \underbrace{\int_{\R^d} u\cdot\nabla\dot\Delta_j \sigma |\dot\Delta_j\sigma|^{d-2}\dot\Delta_j\sigma \diff x}_{I_2} \\ = \int_{\R^d} \dot\Delta_jf|\dot\Delta_j\sigma|^{d-2}\dot\Delta_j\sigma \diff x - \underbrace{\int_{\R^d}R_j|\dot\Delta_j\sigma|^{d-2}\dot\Delta_j\sigma\diff x}_{I_3} .
\end{multline}
By H\" older's inequality, we have
$$|I_1|\leq \|\dot\Delta_j[\sigma{\rm div}]u\|_d\|\dot\Delta_j\sigma\|_d^{d-1}.$$
By integration by parts (note that $\nabla\sigma$ is a Schwartz function)
$$
I_2 = \frac{1}{d}\int_{\R^d} u\cdot\nabla|\dot\Delta_j \sigma|^d\diff x = -\frac{1}{d}\int_{\R^d}{\rm div} u \,|\dot\Delta_j\sigma|^d\diff x,
$$
which, since $\dot{B}^{1}_{d,1}\hookrightarrow L^\infty$ by Lemma \ref{super}, implies that
$$
|I_2|\leq C \|{\rm div} u\|_{\dot{B}^{1}_{d,1}}\|\dot\Delta_j\sigma\|_d^d.
$$
Regarding the commutator term $R_j$ in $I_3$, we have by H\" older's inequality
$$|I_3|\leq \|R_j\|_d\|\dot\Delta_j\sigma\|_d^{d-1}.$$
Combining the above estimates of $I_1, I_2$ and $I_3$ with \eqref{label}, we end up with
\begin{align*}
\frac{1}{d}\frac{d}{dt}\|\dot\Delta_j\sigma\|_d^d\leq C\|\nabla u\|_{\dot{B}^{1}_{d,1}}\|\dot\Delta_j\sigma\|_d^d 
+ \left(\|\dot\Delta_j[\sigma {\rm div}u]\|_d + \|R_j\|_d+\|\dot\Delta_jf\|_d\right)\|\dot\Delta_j\sigma\|_d^{d-1}
\end{align*}
which, after time integration, leads to 
$$\|\dot\Delta_j\sigma(t)\|_d\leq \|\dot\Delta_j\sigma_0\|_d +\int_0^t\|\dot\Delta_j f\|_d\diff\tau
+  \int_0^t\Bigl( C\|\nabla u\|_{\dot{B}^{1}_{d,1}}\|\dot\Delta_j\sigma\|_d+ \|\dot\Delta_j[\sigma {\rm div}u]\|_d + \|R_j\|_d\Bigr)\diff\tau.$$
Then, multiplying by $2^{j}$ and summing over $j\in{\mathbb Z}$, we obtain for all $t\geq0,$
$$
\|\sigma(t)\|_{\dot{B}^1_{d,1}}\leq \|\sigma_0\|_{\dot{B}^1_{d,1}} +\int_0^t\biggl(\|f\|_{\dot B^1_{d,1}}+\|\sigma  {\rm div}u\|_{\dot B^1_{d,1}}
+C\|\nabla u\|_{\dot B^1_{d,1}}\|\sigma\|_{\dot B^1_{d,1}}
+\sum_{j\in\Z}2^j\|R_j\|_d\,\diff\tau\biggr)\cdotp$$
By Lemma \ref{super}\ref{prod}, we have 
$$\|\sigma  {\rm div}u\|_{\dot B^1_{d,1}}\leq \|\sigma\|_{\dot B^1_{d,1}}\|{\rm div}u\|_{\dot B^1_{d,1}},$$
and by Lemma \ref{super}\ref{commutator} we know that
$$\sum_{j\in\Z}2^j\|R_j\|_d = \| R_j \|_{\dot B^1_{d,1}} \leq C\|\nabla u\|_{\dot{B}^{1}_{d,1}}\|\sigma\|_{\dot{B}^1_{d,1}}.$$
Hence, altogether, we get
%$$\|\sigma(t)\|_{\dot{B}^1_{d,1}}\leq \|\sigma_0\|_{\dot{B}^1_{d,1}} +\int_0^t\bigl(\|f\|_{\dot B^1_{d,1}}+ C 
%\|\nabla u\|_{\dot{B}^{1}_{d,1}}\|\sigma\|_{\dot{B}^1_{d,1}}\bigr)\diff\tau,$$
%and applying  Gronwall's lemma thus gives 
the desired inequality. 
\end{proof}

%%%%%%%%%%%%%%%%%%%%%%%%%%%%%%%%%%%%%%%%%%%%%%%%%%%%%%

Proposition \ref{fracdiffestimates} relies on 
 the following Bernstein-type lemma for the fractional Laplacian semigroup.

\begin{lem}
Let $\cC$ be an annulus centered at $0,$ and $\alpha>0$. There exist $C,c >0$ such that for any $1 \leq p \leq \infty$ and $\lambda >0$,  if $\supp \mathcal F u \subset \lambda \cC$ then 
\begin{equation} \label{bernstein}
\left\| e^{-t ((- \Delta)^{\alpha / 2})} u \right\|_{p} \leq C e^{- c\lambda^\alpha t} \| u \|_{p} \quad\hbox{for all }\ t\geq0.
\end{equation}
\end{lem}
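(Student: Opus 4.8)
The plan is to realize the fractional heat semigroup as the Fourier multiplier with symbol $e^{-t|\xi|^\alpha}$ and, exploiting the frequency localization of $u$, to rewrite it as convolution against a kernel whose $L^1$ norm decays like $e^{-c\lambda^\alpha t}$; then \eqref{bernstein} follows from Young's inequality $\|f*g\|_p\le\|f\|_1\|g\|_p$, whose constant is $1$ and hence independent of $p$.

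Concretely, I would first fix $\phi\in\cS(\R^d)$ with $\phi\equiv1$ on $\cC$ and $\supp\phi$ contained in an annulus $\{r_0\le|\xi|\le R_0\}$ with $0<r_0<R_0<\infty$. Since $\supp\cF u\subset\lambda\cC$, one has $e^{-t(-\Delta)^{\alpha/2}}u=K_{t,\lambda}*u$ with $K_{t,\lambda}:=\cF^{-1}\bigl(e^{-t|\xi|^\alpha}\phi(\xi/\lambda)\bigr)$, and a change of variables gives the scaling identity $K_{t,\lambda}(x)=\lambda^d K_{\lambda^\alpha t,1}(\lambda x)$, so that $\|K_{t,\lambda}\|_1=\|K_{\lambda^\alpha t,1}\|_1$. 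It therefore suffices to prove that $\|\cF^{-1}g_s\|_1\le Ce^{-cs}$ for all $s\ge0$, where $g_s(\xi):=e^{-s|\xi|^\alpha}\phi(\xi)$.

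To bound that $L^1$ norm I would use the standard weight trick: $(1+|x|^2)^d\,\cF^{-1}g_s(x)=\cF^{-1}\bigl((\id-\Delta_\xi)^dg_s\bigr)(x)$, hence $|\cF^{-1}g_s(x)|\le C(1+|x|^2)^{-d}\,\|(\id-\Delta_\xi)^dg_s\|_{L^1}$, and since $(1+|x|^2)^{-d}\in L^1(\R^d)$ for $d\ge1$, integrating in $x$ gives $\|\cF^{-1}g_s\|_1\le C\,\|(\id-\Delta_\xi)^dg_s\|_{L^1}$. On $\supp\phi$ the map $\xi\mapsto|\xi|^\alpha$ is smooth with bounded derivatives of every order, so by the Leibniz and Fa\`a di Bruno formulas each derivative of $g_s$ of order at most $2d$ is, on $\supp\phi$, a finite sum of terms of the form (polynomial in $s$ of degree $\le2d$, with coefficients bounded uniformly in $s$) times $e^{-s|\xi|^\alpha}$; consequently $\|(\id-\Delta_\xi)^dg_s\|_{L^1}\le C(1+s)^{2d}\int_{\supp\phi}e^{-s|\xi|^\alpha}\diff\xi\le C(1+s)^{2d}e^{-sr_0^\alpha}$. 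Since $(1+s)^{2d}e^{-sr_0^\alpha}\le Ce^{-sr_0^\alpha/2}$ for all $s\ge0$, we obtain $\|\cF^{-1}g_s\|_1\le Ce^{-cs}$ with $c:=r_0^\alpha/2$. Combining this with the scaling identity and Young's inequality yields $\|e^{-t(-\Delta)^{\alpha/2}}u\|_p\le\|K_{\lambda^\alpha t,1}\|_1\|u\|_p\le Ce^{-c\lambda^\alpha t}\|u\|_p$, with $C,c$ depending only on $d$, $\alpha$ and the annulus $\cC$, which is exactly \eqref{bernstein}.

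The main obstacle is the uniform-in-$s$ control of $\|(\id-\Delta_\xi)^dg_s\|_{L^1}$: one must check that differentiating $e^{-s|\xi|^\alpha}$ only produces at most polynomial growth in $s$, which is then harmless because it is absorbed by the exponential factor $e^{-sr_0^\alpha}$ coming from the fact that $\phi$ vanishes near the origin — this is precisely where it matters that $\cC$ is an annulus rather than a ball.
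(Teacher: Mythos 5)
Your proof is correct and complete: the localization of $\cF u$ to $\lambda\cC$, the scaling identity $K_{t,\lambda}(x)=\lambda^d K_{\lambda^\alpha t,1}(\lambda x)$, the weighted integration-by-parts bound on $\|\cF^{-1}g_s\|_1$ (where the annulus hypothesis supplies both the smoothness of $|\xi|^\alpha$ and the factor $e^{-sr_0^\alpha}$ absorbing the polynomial growth in $s$), and Young's inequality with its $p$-independent constant are exactly the right ingredients. The paper does not prove this lemma itself but cites standard references, and your argument is essentially the standard proof found there, consistent with the weight trick the paper uses elsewhere for the multiplier $T$.
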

\begin{proof}
The proof can be found in e.g. \cite[Proposition 2.2]{hmidi} or  \cite[Lemma 3.1]{wu}.
\end{proof}

\begin{proof}[Proof of Proposition \ref{fracdiffestimates}]
Arguing by density, it suffices to consider the case where  $u_0$  and $f$ are 
in $\cS_0$ and $\cC(\R_+;\cS_0),$ respectively. 
Then  the Cauchy problem \eqref{fracdiff} has a unique solution $u$ in $\cC^1(\R_+;\cS_0)$ that  satisfies 
\begin{equation*}
\widehat u(t,\xi) = e^{-t \mu | \xi|^{\alpha}} \widehat{u}_0 (\xi)+ \int_0^t e^{- \mu |\xi|^\alpha(t-\tau)}  \widehat f(\tau,\xi)\diff\tau.
\end{equation*}
We take  the Fourier transform of \eqref{fracdiff} with respect to the space variable,
 and  multiply it by the  function $\varphi(2^{-k} \ \cdot )$ from the Littlewood-Paley decomposition, obtaining
\begin{equation*}
 \varphi(2^{-k} \ \cdot) \widehat u_t + \mu |\xi|^\alpha \varphi(2^{-k}\ \cdot) \widehat u = \varphi(2^{-k}\ \cdot)  \widehat f \ .
\end{equation*}
Using estimate \eqref{bernstein} and Minkowski inequality, we obtain the following time-pointwise estimate:
\begin{equation*}
 \|\dot \Delta_k u (t) \|_{L^p}  \leq C e^{- c\mu t  2^{\alpha k}} \|\dot \Delta_k u_0\|_{p} + C \int_0^t e^{- c \mu 2^{k\alpha}(t-\tau)} \|\dot \Delta_k f(\tau)\|_{p}\diff\tau.
\end{equation*}
Hence 
\begin{equation*}
\begin{split}
\|u\|_{L^\infty_T \dot B^s_{p,1} } =\sup_{t \in (0,T)} \left\{ \sum_{k\in \Z} 2^{s k}  \|\dot \Delta_k u\|_{p} \right\}
\leq C\|u_0\|_{\dot B^s_{p,1} } + C\int_0^T \|f(t)\|_{\dot B^s_{p,1}}\diff t
\end{split}
\end{equation*}
and 
$$\begin{aligned}
\|u\|_{L^1_T \dot B^{s+\alpha}_{p,1}} &= \int_0^T \sum_{k\in\Z} 2^{k(\alpha+s)} \|\Delta_k u(t)\|_p\diff t \\
&\leq C\int_0^T \sum_{k\in\Z} 2^{k(\alpha+s)} \left\{ e^{-ct\mu 2^{\alpha k}} \|\Delta_k u_0\|_p  +  \int_0^t e^{-c\mu2^{\alpha k}(t-\tau)} \|\Delta_k f(\tau)\|_p\,d\tau   \right\} \diff t \\
&\leq C \sum_{k\in\Z} \biggl(2^{k(\alpha+s)} 2^{-\alpha k} \|\Delta_k u_0\|_p +  2^{k(\alpha +s)} \|e^{-c\mu2^{\alpha k}t}\|_{L^1(0,T)} \int_0^T  \|\Delta_k f(t)\|_{p}\diff t\biggr)  \\
& \leq  C \|u_0\|_{\dot B^s_{p,1}} + C \sum_{k\in\Z} 2^{k(\alpha +s)} 2^{-\alpha k}  \int_0^T  \|\Delta_k f(t)\|_{p}\diff t \\
&\leq C\Bigl( \|u_0\|_{\dot B^s_{p,1}} +   \|f\|_{L^1_T \dot B^s_{p,1}} \Bigr)\cdotp
\end{aligned}
$$\end{proof}

%Let $v$ be a smooth vector field. Let $\psi_t$ be the solution to
%\begin{equation}
%\psi_t (x) = x + \int_0^t v( s , \psi_s(x) ) \diff s
%\end{equation}
%Then for all $t>0$ the flow $\psi_t$ is a $C^1$- diffeomorphism over $\R^d$ and the following estimates hold
%\begin{align}
%\| \nabla D \psi_t^{\pm 1} \|_{L^\infty} & \leq \exp \{ V(t) \} \ , \\
%\| \nabla  \psi_t^{\pm 1} - Id \|_{L^\infty} & \leq \exp \{ V(t) \} -1 \ , \\
%\| \nabla^2 \psi_t^{\pm 1} \|_{L^\infty} & \leq \exp \{ V(t) \} \int_0^t \| \nabla^2 v(s) \|_{L^\infty} \exp \{ V' (t)\} \diff s \ , 
%\end{align}
%where $V(t) = \int_0^t \| \nabla v (s) \|_{L^\infty} \diff s$.
%\end{lem}
%
%\begin{proof}
%See Danchin, Proposition 3.10.
%\end{proof}
%
%\begin{lem}
%Let $v$ be a given vector field in $L^1_{loc} (\R^+ ; Lip)$. For $q \in \Z$ we set $u_q = \dot \Delta_j u$ and denote by $\psi_q$ the flow of the regularized vector field $\dot S^q v$. Then, for $u \in \dot B^\alpha_{p, \infty}$ with $\alpha \in [0,2)$ and $p \in [1, \infty]$ we have
%\begin{equation}
%\| (- \Delta)^{\alpha /2} ( u_q \circ \psi_q ) - (\Delta)^{\alpha /2} u_q \circ \psi_q \| \leq C \exp \{C V(t) \} V^{1- \alpha /2} (t) 2^{q \alpha} \| u_q \|_{L^p} \ ,
%\end{equation}
%where $V(t) = \int_0^t \| \nabla v (s) \|_{L^\infty} \diff s$
%\end{lem}
%
%\begin{proof}
%See Cannone, Miao, Wu: On the inviscid limit of the two dimensional Navier-Stokes equations with fractional diffusion; Proposition 3.1 
%\end{proof}

\noindent{\bf Acknowledgments.}
The first author (R.D.)  is partly supported by ANR-15-CE40-0011.
The second and third authors (P.B.M. \& J.P.) have been partly supported by National  Science  Centre  grant 2014/14/M/ST1/00108 (Harmonia).
The third author was supported by the Polish MNiSW grant Mobilno\' s\' c Plus no. 1617/MOB/V/2017/0.
The fourth author (B.W.) was supported by the Polish NCN grant no. 2016/23/B/ST1/00434.
This work was partially supported by the Simons-Foundation grant 346300 and the Polish Government
MNiSW 2015–2019 matching fund.

\bibliographystyle{abbrv}
\bibliography{CS-16bis}

\end{document}